\newtheorem{theorem}{Theorem}
\newtheorem{lemma}[theorem]{Lemma}
\newtheorem{corollary}[theorem]{Corollary}
\newtheorem{proposition}[theorem]{Proposition}
\theoremstyle{definition}
\newtheorem{definition}[theorem]{Definition}
\newtheorem{remark}[theorem]{Remark}
\newtheorem*{ackno}{Acknowledgement}
\numberwithin{equation}{section}
\numberwithin{theorem}{section}
\newcommand{\N}{\mathbb{N}}
\newcommand{\G}{\mathcal{G}}
\newcommand{\Laplace}{\triangle}
\newcommand{\BigOh}{\mathcal{O}}
\renewcommand{\Re}{\mathrm{Re}}
\renewcommand{\theta}{\vartheta}
\newcommand{\trans}{\mathsf{T}}
\newcommand{\nt}{\tilde{n}}
\newcommand{\K}{\mathcal{K}}
\newcommand{\FF}{\mathbb{F}}
\newcommand{\X}{\mathscr{X}}
\DeclareMathOperator{\HyperF}{F}
\newcommand{\Hypergeom}[5]{{\sideset{_#1}{_#2}\HyperF\!\left(
      \genfrac{}{}{0pt}{0}{#3}{#4}
      \middle|\,#5\right)}}
\DeclareMathOperator{\tr}{tr}
\begin{document}
\author[Anderson]{Austin Anderson}
\address[AA]{Department of Mathematics\\
Florida State University}
\email{ana17b@my.fsu.edu}

\author[Dostert]{Maria Dostert\textsuperscript{\ddag}}
\address[MD]{Department of Mathematics\\
  KTH Royal Institute of Technology, Stockholm, Sweden}
\email{dostert@kth.se}

\author[Grabner]{Peter J. Grabner\textsuperscript{*}}
\address[PG\&RM]{Institute of Analysis and Number Theory\\
  Graz University of Technology, Graz, Austria}
\email{peter.grabner@tugraz.at}

\author[Matzke]{Ryan W. Matzke\textsuperscript{*}}
\email{matzke@math.tugraz.at}

\author[Stepaniuk]{Tetiana A. Stepaniuk\textsuperscript{$\diamond$}}
\address[TS]{Institute of Mathematics\\University of L\"ubeck, Germany; \\
Institute of Mathematics  NAS of Ukraine}
\email{stepaniuk@math.uni-luebeck.de}

\thanks{\textsuperscript{*}These authors are supported by the Austrian Science
  Fund FWF project F5503 part of the Special Research Program (SFB)
  ``Quasi-Monte Carlo Methods: Theory and Applications''.}
\thanks{\textsuperscript{\ddag}This author was partially
  supported by the Wallenberg AI, Autonomous Systems and Software Program
  (WASP) funded by the Knut and Alice Wallenberg Foundation.}
\thanks{\textsuperscript{$\diamond$}This author was
  supported by the Alexander von Humboldt Foundation.}  
\subjclass[2020]{Primary: 31C12 60G55, Secondary: 42C10 33C47}

\title[Riesz and Green energy on projective spaces]
{Riesz and Green energy on projective spaces}
\date{\today}
\begin{abstract}
  In this paper we study Riesz, Green and logarithmic energy on two-point
  homogeneous spaces. More precisely we consider the real, the complex, the
  quaternionic and the Cayley projective spaces. For each of these spaces we
  provide upper estimates for the mentioned energies using determinantal point
  processes. Moreover, we determine lower bounds for these energies of the same
  order of magnitude.
\end{abstract}
\maketitle
\section{Introduction}\label{sec:introduction}

Motivated by classical potential theory (see, for instance
\cite{Landkof1972:foundations_modern_potential}) discrete energies of point
sets on manifolds have been studied. More precisely, for a given symmetric and
lower semi-continuous kernel $K:\Omega\times\Omega\to\mathbb{R}$ on a metric
space $\Omega$ the \emph{discrete energy} of a set
$\omega_N=\{x_1,\ldots,x_N\}\subset\Omega$ is given by
\begin{equation*}
  E_K(\omega_N)=\sum_{\substack{i,j=1\\i\neq j}}^NK(x_i,x_j).
\end{equation*}
In rather general settings the empirical measures associated to minimizing
configurations of $E_K(\omega_N)$ for $N\to\infty$ converge weakly to the
minimizing measure of the continuous energy
\begin{equation*}
  I_K(\mu)=\iint\limits_{\Omega\times\Omega}K(x,y)\,d\mu(x)\,d\mu(y)
\end{equation*}
amongst all Borel probability measures. For more details and a comprehensive
introduction to the subject we refer to
\cite{BorodachovHardinSaff2019:EnergyRectifiableSets}.

For a sufficiently repulsive potential, one expects the minimizing
configurations of the discrete energy to be well-distributed, in some
sense. Perhaps the best-known example of such potentials are the classical
Riesz $s$-energies ($s>0$) for infinite compact $\Omega \subseteq \mathbb{R}^d$
\begin{equation*}
J_s(x,y) := \frac{1}{\| x-y\|^s}.
\end{equation*}
It has been shown in \cite{Hardin_Saff2005:minimal_riesz_energy} that, under
rather general conditions, if $s \geq \dim(\Omega)$, then the minimizers of
$E_{J_s}$ are uniformly distributed. This uniformity of minimizers does not
hold in general for $s < \dim(\Omega)$. However, due to the highly symmetric
structure of the sphere $\mathbb{S}^{d-1}$, one finds that for the Riesz
potentials $J_s$ with $0< s< d-1$ and the logarithmic potential
\begin{equation*}
J_0(x,y) = - \log( \| x-y\|),
\end{equation*}
which is obtained by a limiting process for $s\to 0$, the continuous energies
$I_{J_s}$ and $I_{J_0}$ are uniquely minimized by the uniform measure on the
sphere $\sigma$, and the minimizers of the discrete energies are uniformly
distributed.

The minimal energy of $N$ points for the kernel $J_s$ ($s \geq 0$) on a space $\Omega$,
\begin{equation*}
\mathcal{E}_{J_s}(\Omega, N) = \min_{\omega_N \subset \Omega} E_{J_s}(\omega_N)
\end{equation*}
has been investigated especially for the sphere $\mathbb{S}^{d-1}$, see for
instance \cite{Wagner1992:means_distances_surface,
  Wagner1990:means_distances_surface}.  For $0<s<d-1$ it satisfies
\begin{equation*}
  -C_1 N^{1+\frac s{d-1}}\leq \mathcal{E}_{J_s}(\mathbb{S}^{d-1}, N)-
  I_{J_s}(\sigma)N^2
  \leq -C_2 N^{1+\frac s{d-1}},
\end{equation*}
where $C_1$ and $C_2$ are positive constants.  The term $I_{J_s}(\sigma)N^2$
of highest order reflects the fact that the empirical measures of the discrete
minimizers weakly tend to $\sigma$. It is conjectured that a more precise
asymptotic equation
\begin{equation*}
  \mathcal{E}_{J_s}(\mathbb{S}^{d-1}, N)= I_{J_s}(\sigma)N^2-
  C N^{1+\frac s{d-1}}+o(N^{1+\frac s{d-1}})
\end{equation*}
holds, where the precise value of the constant $C$ is believed to reflect the
local structure of minimizing configuration. For more details we refer to
\cite{Brauchart_Hardin_Saff2012:next_order_term} and
\cite{Hardin_Saff2005:minimal_riesz_energy}. The conjectural values of the
constant are related to zeta functions of certain lattices, which relates the
question to lattice energies on Euclidean spaces.

Motivated by these results, as well as certain other recent works mentioned
below, we extend these results known for spheres to the projective spaces
$\mathbb{FP}^{d-1}$ over scalar domains $\mathbb{F}$ (the real or complex
numbers, the quaternions, or the octonions). These spaces together with the
spheres are the only compact connected two-point homogeneous spaces (see
\cite{Wang1952:Two_Point_Compact}).  On these projective spaces, we study the
energies given by the \textit{chordal} Riesz $s$-kernels
\begin{equation*}
  K_s(x,y) = \frac1{\rho(x,y)^s} =
  \frac1{\sin(\theta(x,y))^s}\quad\text{for }s>0,
\end{equation*}
and chordal logarithmic kernel
\begin{equation*}
K_0(x,y) =  - \log(\rho(x,y)) = - \log \sin(\theta(x,y))
\end{equation*}
where $\rho$ and $\vartheta$ are the chordal and geodesic metrics,
respectively, discussed below. The case of $s<\dim(\Omega)$ is the subject of
classical potential theory (see, for instance
\cite{Landkof1972:foundations_modern_potential}), the corresponding kernels are
called \emph{singular}, whereas the kernels for $s\geq\dim(\Omega)$ are called
\emph{hypersingular}. Each of the projective spaces can be embedded in a
sufficiently high dimensional unit sphere, in which case the chordal distance
becomes the Euclidean distance, making these energies the natural
generalization of the classical Riesz and logarithmic energies on the sphere,
but without requiring the embedding itself.

The study of Riesz energies on projective spaces, particularly $\mathbb{CP}^{d-1}$, has been a subject of recent interest. In\cite{Alehyane_Asserda_Assila2022:applications_projective_logarithmic,
  Asserda_Assila_Zeriahi2020:projective_logarithmic_potentials,
  Assila2018:projective_logarithmic_potentials}, the authors studied various
potential theoretic properties of the logarithmic energy on complex projective
spaces. The expected Riesz and logarithmic energies of zero sets of independent Gaussian polynomials on $\mathbb{CP}^{d-1}$ (and more generally on K\"{a}hler manifolds) was determined in \cite{Feng_Zelditch2013:random_riesz_kahler}, whereas in \cite{Beltran_Etayo2018:projective_ensemble_distribution}, the
authors computed the expected energies for certain determinantal point processes to find asymptotic upper bounds on
the Riesz and logarithmic energies. More qualitative properties of the minimizers of the logarithmic and Riesz energies on the real and complex projective spaces were studied in
\cite{Chen_Hardin_Saff2021:search_tight_frames}. The authors
found that the minimizers of these energies are uniformly distributed,
including the hypersingular case, and that these minimizers approximate tight
frames (acting as element on the real and complex spheres). Moreover, as
$s \rightarrow \infty$, the minimizers of the Riesz energies approximate best
packings on these spaces (i.e. frames with low coherence). Both tight frames on
real and complex spaces as well as best packings on Grassmannians have
applications to signal processing (see, e.g.,
\cite{Merda_Davison2014:Flexible_Codebook, Holmes_Paulsen2004:Frames_Erasure,
  Casazza_Kovecevic2001:Uniform_Tight_Frames}).

Another natural kernel to study on the projective spaces is the Green function,
$G(x,y)$, associated to the Laplace-Beltrami operator. The Green function is a
smooth potential, intrinsic to any Riemannian manifold, that behaves similarly
to a Riesz energy at short ranges. On the sphere and projective spaces, it is
in fact a function of distance only, making computations much more
feasible. The minimization of Green energies on compact Riemannian manifolds
was first studied in \cite{Beltran_Corral_CriadodelRey2019:GreenEnergy}, where
it was shown that the continuous Green energy is uniquely minimized by the
uniform measure and that minimizers of the discrete Green energy are uniformly
distributed. The minimizers of the Green energy have more recently been shown
to be well-seperated in \cite{Criado2019:separation_distance_minimal}, and to
have the optimal asymptotic quadratic Wasserstein distance from the uniform
measure in \cite{Steinerberger2021:wasserstein_inequality_minimal}. Upper
bounds for the minimal discrete Green energy on complex projective spaces were
determined in \cite{Beltran_Etayo2018:projective_ensemble_distribution} using
determinantal point processes different from the ones used in our paper. This
upper bound was of the optimal order, which was determined for general compact
Riemannian manifolds in
\cite{Steinerberger2021:wasserstein_inequality_minimal}.

\subsection{Summary of Paper and Main Results}\label{subsec:IntoResults}

In Sections \ref{sec:notation} and \ref{sec:jacobi-polynomials} we list some
necessary notation and properties of Jacobi polynomials, which we make
extensive use of in this paper.

In order to make this paper (mostly) self-contained and gather the
required material, in Section \ref{sec:harm-analys-two}, we cover the necessary
background for harmonic analysis on compact connected two-point homogeneous
spaces, as well as some of its consequences.  In Section
\ref{sec:green-function}, we obtain explicit formulae for the Green functions on
the projective spaces. In Section \ref{sec:MinimizersOfEnergies}, we show that
the Riesz and logarithmic kernels are strictly positive definite, and obtain
the following result:
\begin{theorem}\label{thm:UniformMinEnergiesIntro}
  The continuous logarithmic energy $I_{K_0}$, Green energy $I_G$, and Riesz
  $s$-energies $I_{K_s}$, for $0 < s < \dim(\mathbb{FP}^{d-1})$, are uniquely
  minimized by the uniform measure $\sigma$.
  
Moreover, if $\{ \omega_N\}_{N=2}^{\infty}$ is a sequence of minimizers for the
discrete energies $E_{K_0}$, $E_G$, or $E_{K_s}$, for
$0 < s < \dim(\mathbb{FP}^{d-1})$, then $\{ \omega_N\}_{N=2}^{\infty}$ is a
sequence of uniformly distributed point configurations.
\end{theorem}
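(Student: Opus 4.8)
The plan is to derive both statements from the strict positive definiteness of the three kernels, established earlier in this section, together with the harmonic analysis on $\mathbb{FP}^{d-1}$ set up in Section~\ref{sec:harm-analys-two}. Recall that a distance-dependent lower semicontinuous kernel admits an expansion $K(x,y)=\sum_{k\ge 0}\widehat K_k\,Z_k(x,y)$ in the zonal reproducing kernels $Z_k$ of the Laplace eigenspaces (expressible through Jacobi polynomials), where $Z_0\equiv 1$ and $\int_{\mathbb{FP}^{d-1}} Z_k(x,y)\,d\sigma(y)=0$ for $k\ge 1$, and strict positive definiteness means exactly that $\widehat K_k>0$ for all $k\ge 1$. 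For the Green kernel this expansion is immediate, since $G(x,y)=\sum_{k\ge 1}\lambda_k^{-1}Z_k(x,y)$ with positive eigenvalues $\lambda_k$; for $K_s$ and $K_0$ it is supplied by the positive-definiteness computation.

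First I would settle uniqueness of the continuous minimizer. Writing an arbitrary Borel probability measure as $\mu=\sigma+\nu$ with $\nu$ a signed measure of total mass zero, the orthogonality relation above gives
\begin{equation*}
  I_K(\mu)=\widehat K_0+\sum_{k\ge 1}\widehat K_k\iint Z_k(x,y)\,d\nu(x)\,d\nu(y)
  = I_K(\sigma)+\sum_{k\ge 1}\widehat K_k\,\|\widehat\nu_k\|^2,
\end{equation*}
where $\widehat\nu_k$ collects the Fourier coefficients of $\nu$ against the level-$k$ harmonics and each summand is nonnegative because $Z_k$ is positive semidefinite. Thus $I_K(\mu)\ge I_K(\sigma)$, and since $\widehat K_k>0$ for every $k\ge 1$, equality forces $\widehat\nu_k=0$ for all $k$, i.e.\ $\nu=0$ and $\mu=\sigma$. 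For the singular Riesz kernels I would first run this with the truncation $K_M=\min(K,M)$ to justify interchanging summation and integration, then let $M\to\infty$ by monotone convergence, using that $I_K(\sigma)<\infty$ for $0<s<\dim(\mathbb{FP}^{d-1})$.

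For the discrete minimizers I would use the standard weak$^*$-compactness scheme (cf.\ \cite{BorodachovHardinSaff2019:EnergyRectifiableSets}). Let $\mu_N=\frac1N\sum_{i}\delta_{x_i}$ be the empirical measures of a minimizing sequence and pass to a weak$^*$-convergent subsequence $\mu_{N_k}\to\mu^*$. Averaging over i.i.d.\ points sampled from $\sigma$ gives $\mathcal{E}_{K}(\mathbb{FP}^{d-1},N)\le N(N-1)\,I_K(\sigma)$, hence $\limsup_N N^{-2}E_K(\omega_N)\le I_K(\sigma)$. For the matching lower bound I would truncate, noting that the diagonal contributes exactly $M/N$, so that
\begin{equation*}
  \frac1{N^2}E_K(\omega_N)\ge \iint K_M\,d\mu_N\,d\mu_N-\frac MN ,
\end{equation*}
let $N_k\to\infty$ and invoke lower semicontinuity of $\iint K_M$ under weak$^*$ convergence, then $M\to\infty$ to obtain $\liminf_N N^{-2}E_K(\omega_N)\ge I_K(\mu^*)\ge I_K(\sigma)$. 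The two bounds force $I_K(\mu^*)=I_K(\sigma)$, and the uniqueness just proved identifies $\mu^*=\sigma$; as the limit is independent of the subsequence, $\mu_N\to\sigma$ weak$^*$, which is the asserted uniform distribution.

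The main obstacle is the unboundedness of the singular Riesz kernels $K_s$ on the diagonal: every interchange of limits, the lower semicontinuity of the energy, and the control of the diagonal term in the discrete sum must pass through the truncations $K_M$ rather than being applied to $K$ directly. The logarithmic and Green kernels are treated identically (the Green kernel being in fact easier, as it is smooth off the diagonal with an integrable singularity), so the whole theorem follows once the positivity $\widehat K_k>0$ is in hand.
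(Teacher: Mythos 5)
Your proposal is correct and takes essentially the same route as the paper: strict positive definiteness of the three kernels (positivity of the Jacobi coefficients) identifies $\sigma$ as the unique continuous minimizer, and the standard weak$^*$-compactness argument with truncations then forces the empirical measures of discrete minimizers to converge to $\sigma$. The only difference is presentational: where you identify $\sigma$ directly via the decomposition $\mu=\sigma+\nu$ and reprove the discrete-to-continuous comparison with the $K_M=\min(K,M)$ truncation, the paper instead cites \cite[Theorems~4.2.2 and~4.2.7]{BorodachovHardinSaff2019:EnergyRectifiableSets} for these facts and pins down $\sigma$ through isometry invariance of the unique minimizer under the transitive group action.
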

We finish this section by discussing the properties of the heat kernel we use
to find lower bounds on the minimal discrete Green energy.

In Section~\ref{sec:rotat-invar-determ}, we define determinantal point
processes given by rotation invariant kernels on the projective spaces. The
processes are defined by projections to spaces of harmonic functions, thus they
are called \emph{harmonic ensembles} following
\cite{Beltran_Marzo_Ortega-Cerda2016:energy_discrepancy_rotationally}.

In Section~\ref{sec:bounds-green-riesz}, we provide lower and upper asymptotic bounds on the minimal discrete Riesz, logarithmic, and Green energies on the projective spaces, cumulating in the following three results:
\begin{theorem}\label{thm:BoundsMinRieszEnergy}
  For each projective space $\mathbb{FP}^{d-1}$ and
  $0 < \!s\! <\! \dim(\mathbb{FP}^{d-1}\!)$ there exist positive constants $C_s, C_s'$
  such that for $N \geq 2$
\begin{equation*}
-C_s N^{1 +\frac{s}{\dim(\mathbb{FP}^{d-1})}} \leq \mathcal{E}_{K_s}(\mathbb{FP}^{d-1}, N) - I_{K_s}(\sigma) N^2 \leq -C_s' N^{1 +\frac{s}{\dim(\mathbb{FP}^{d-1})}}.
\end{equation*}
\end{theorem}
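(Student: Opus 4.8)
The plan is to establish the two inequalities by entirely different techniques: the upper bound via explicit energy computations for the harmonic ensembles, and the lower bound via a linear-programming/positive-definiteness argument.

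The plan is to prove the two inequalities by different methods: an upper bound on $\mathcal{E}_{K_s}$ coming from the expected energy of the harmonic ensembles, and a matching lower bound coming from a linear programming (Delsarte--Yudin) argument. Throughout I would write $D=\dim(\mathbb{FP}^{d-1})$ and expand the kernel in the zonal basis $K_s(x,y)=\sum_{k\ge 0}a_k(s)\,\varphi_k(x,y)$, where $\varphi_k$ is the normalized zonal function attached to the $k$-th Laplace--Beltrami eigenspace $H_k$ (so $\varphi_k(x,x)=1$ and $\varphi_0\equiv 1$), and where strict positive definiteness, established earlier, gives $a_k(s)>0$ with $a_0(s)=I_{K_s}(\sigma)$. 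The whole argument rests on the asymptotics $\dim H_k\sim c\,k^{D-1}$ together with the Jacobi polynomial estimates of Section~\ref{sec:jacobi-polynomials}.

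For the upper bound I would use that the minimum is at most the average, so $\mathcal{E}_{K_s}(\mathbb{FP}^{d-1},N)\le \mathbb{E}\,[E_{K_s}(\omega_N)]$ for the harmonic ensemble whose reproducing kernel $\mathcal{K}_N(x,y)=\sum_{k=0}^{K}\dim H_k\,\varphi_k(x,y)$ has $N=\dim\bigoplus_{k=0}^{K}H_k$ and constant diagonal $\mathcal{K}_N(x,x)=N$. The determinantal structure then gives the exact identity
\begin{equation*}
\mathbb{E}\,[E_{K_s}(\omega_N)]=I_{K_s}(\sigma)\,N^2-\iint K_s(x,y)\,|\mathcal{K}_N(x,y)|^2\,d\sigma(x)\,d\sigma(y),
\end{equation*}
so it remains to show the subtracted integral is $\ge C_s' N^{1+s/D}$. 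Reducing to a one-dimensional integral in the distance variable against the Jacobi weight and inserting the expansions of $K_s$ and $\mathcal{K}_N$, the dominant contribution comes from the top degrees $k\approx K$; the Jacobi asymptotics then yield an integral of order $N^{1+s/D}$. Finally I would pass from the discrete values $N=\dim\bigoplus_{k\le K}H_k$ to all $N\ge 2$ by a standard monotonicity/interpolation step.

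For the lower bound I would fix a degree $M$ and construct a test polynomial $h_M(x,y)=\sum_{k=0}^{M}b_k\,\varphi_k(x,y)$ with $h_M\le K_s$ pointwise and $b_k\ge 0$ for $1\le k\le M$. Since $\varphi_k(x,x)=1$ and $\sum_{i,j}\varphi_k(x_i,x_j)=\tfrac{1}{\dim H_k}\sum_m\big|\sum_i Y_{k,m}(x_i)\big|^2\ge 0$ for $k\ge 1$ by the addition theorem, every configuration satisfies
\begin{equation*}
E_{K_s}(\omega_N)\ge E_{h_M}(\omega_N)=b_0 N^2+\sum_{k=1}^{M}\frac{b_k}{\dim H_k}\sum_{m}\Big|\sum_{i}Y_{k,m}(x_i)\Big|^2-N\,h_M(x,x)\ge b_0 N^2-N\,h_M(x,x).
\end{equation*}
Integrating $h_M\le K_s$ gives $b_0\le I_{K_s}(\sigma)$, and the Jacobi asymptotics should furnish the quantitative estimates $I_{K_s}(\sigma)-b_0=\BigOh(M^{\,s-D})$ and $h_M(x,x)=\BigOh(M^{s})$. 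Substituting these and optimizing over the free parameter by taking $M\sim N^{1/D}$ balances the two error terms and produces $\mathcal{E}_{K_s}-I_{K_s}(\sigma)N^2\ge -C_s N^{1+s/D}$.

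I expect the main obstacle to be the explicit construction of the minorant $h_M$: one must produce a degree-$M$ polynomial lying below the singular kernel $K_s$ whose zonal coefficients are nonnegative and whose constant term and diagonal value simultaneously satisfy the sharp bounds $b_0=I_{K_s}(\sigma)-\BigOh(M^{s-D})$ and $h_M(x,x)=\BigOh(M^{s})$. This is exactly where the fine properties of Jacobi polynomials (positivity of the relevant coefficients and Mehler--Heine type behavior near the diagonal) are essential, and it is the step that pins down the matching orders, and ultimately the constants, in the two-sided estimate.
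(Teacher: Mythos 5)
Your two-sided architecture (probabilistic averaging for the upper bound, linear programming for the lower bound) matches the paper's in spirit, but your lower-bound half has a genuine gap: the minorant $h_M$ is never constructed, and you yourself flag its construction as the "main obstacle" --- which means the crux of the proof is missing. You correctly list the three properties needed (nonnegative zonal coefficients, $I_{K_s}(\sigma)-b_0=\mathcal{O}(M^{s-D})$, diagonal value $\mathcal{O}(M^s)$), but producing a degree-$M$ polynomial below the singular kernel $K_s$ with these sharp quantitative bounds is a delicate Yudin-type problem, and nothing in your proposal delivers it. The paper (Section~\ref{subsec:lowerbounds}) sidesteps this entirely by observing that its LP lemma, $\mathcal{E}_K(N)\geq \widehat g(0)N^2-g(1)N$, requires only that the minorant be continuous and positive definite --- \emph{not} polynomial. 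Writing $u=\sin^2\vartheta$ and $f_s(u)=u^{-s/2}$, Taylor's formula with integral remainder gives, for every $\delta>0$,
\begin{equation*}
F_{n,\delta}(u)=\sum_{k=0}^{n}\frac{\delta^k}{k!}(-1)^k f_s^{(k)}(u+\delta)\leq f_s(u),
\end{equation*}
and each summand $(-1)^kf_s^{(k)}(u+\delta)=c_{s,k}(u+\delta)^{-\frac s2-k}$ is completely monotone in $u$, so all its Jacobi coefficients are nonnegative by Lemma~\ref{lem:absolutely_monotone} and Theorem~\ref{thm:PosDefCoefficients}. Here $n$ is \emph{fixed} (the smallest integer exceeding $\alpha-\frac s2$) and the shift $\delta$ replaces your degree parameter, with $\delta\sim M^{-2}$: the diagonal value is $F_{n,\delta}(0)\asymp\delta^{-s/2}$, and the deficit in the constant coefficient is computed exactly, via a $\Hypergeom21{\beta+1,\frac s2+n+1}{\alpha+\beta+2}{\frac1{1+t}}$ asymptotic as $t\to0$, to be $\asymp\delta^{\alpha+1-\frac s2}$. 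Choosing $\delta=N^{-1/(\alpha+1)}=N^{-2/D}$ balances the two errors and yields the stated bound. Unless you can actually exhibit your $h_M$, your lower-bound argument proves nothing; the completely-monotone/Taylor device is the missing idea.

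On the upper bound you also deviate from the paper in a way that leaves a loose end: the harmonic ensemble only realizes the special values $N_n=\frac{(\alpha+\beta+2)_n(\alpha+2)_n}{(\beta+1)_n\,n!}$, and the paper therefore proves the all-$N$ upper bound of this theorem by jittered sampling over Gigante--Leopardi equal-area partitions (Section~\ref{sec:upper-bounds-jittered}), reserving the determinantal computation (Section~\ref{sec:upper-bounds-riesz}) for explicit constants at those special $N$. Your "standard monotonicity/interpolation step" can in fact be made rigorous --- $\mathcal{E}_{K_s}(N)/(N(N-1))$ is nondecreasing in $N$ for any kernel, and $N_{n+1}/N_n\to1$ since $N_n\asymp n^{D}$, so transferring the bound between consecutive admissible values loses only a constant factor --- but as written it is an unverified assertion, and you should either carry it out or adopt the jittered-sampling route, which is more elementary and gives every $N\geq2$ directly.
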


\begin{theorem}\label{thm:BoundsMinLogEnergy}
For each projective space $\mathbb{FP}^{d-1}$, there exist positive constants $C_0, C_0'$ such that for $N \geq 2$
\begin{equation*}
-C_0 N \log(N) \leq \mathcal{E}_{K_0}(\mathbb{FP}^{d-1}, N) - I_{K_0}(\sigma) N^2 \leq -C_0' N \log(N).
\end{equation*}
\end{theorem}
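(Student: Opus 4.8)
The plan is to prove the two inequalities by the same mechanism as in Theorem~\ref{thm:BoundsMinRieszEnergy}, with the power singularity $\rho^{-s}$ replaced by the logarithmic singularity $-\log\rho$. Writing $D=\dim(\mathbb{FP}^{d-1})$, the heuristic is that the correct scale of a good $N$-point configuration is $\rho\sim N^{-1/D}$, so that a Riesz weight contributes a typical value $\rho^{-s}\sim N^{s/D}$, whereas the logarithmic weight contributes $-\log\rho\sim\tfrac1D\log N$; this is exactly why the correction term changes from $N^{1+s/D}$ to $N\log N$ in the critical endpoint $s=0$.

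For the upper bound I would use the harmonic ensemble of Section~\ref{sec:rotat-invar-determ}. Let $K_\tau$ be the reproducing kernel of the span of Laplace--Beltrami eigenfunctions up to level $\tau$, normalised so that $K_\tau(x,x)=N=\dim\Pi_\tau$, and recall that the two-point intensity of the associated determinantal process is $\rho_2(x,y)=N^2-|K_\tau(x,y)|^2$. Then
\[
  \mathbb{E}\bigl[E_{K_0}(\omega_N)\bigr]
  =\iint K_0(x,y)\,\rho_2(x,y)\,d\sigma(x)\,d\sigma(y)
  = I_{K_0}(\sigma)\,N^2-\iint K_0(x,y)\,|K_\tau(x,y)|^2\,d\sigma(x)\,d\sigma(y),
\]
using $\iint K_0\,d\sigma\,d\sigma=I_{K_0}(\sigma)$ and $\iint|K_\tau|^2\,d\sigma\,d\sigma=N$. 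Since $\mathcal{E}_{K_0}(\mathbb{FP}^{d-1},N)\le\mathbb{E}[E_{K_0}(\omega_N)]$, it suffices to show the correction integral is $\gtrsim N\log N$. Expanding $K_0$ and $K_\tau$ in the Jacobi basis reduces this to a weighted sum of the Jacobi coefficients of $K_0$; equivalently, since $|K_\tau|^2$ has total mass $N$ and concentrates on the diagonal at scale $\rho\sim N^{-1/D}$, the weight $-\log\rho$ contributes the typical value $\tfrac1D\log N$, so the integral is of order $N\log N$. A monotonicity/point-insertion argument then fills in the values of $N$ between consecutive dimensions $\dim\Pi_\tau$, giving $\mathcal{E}_{K_0}\le I_{K_0}(\sigma)N^2-C_0'N\log N$.

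For the lower bound I would use the Delsarte--Yudin linear-programming bound. For any polynomial $h=\sum_{\ell=0}^{m}h_\ell Z_\ell$ with $h_\ell\ge0$ for $\ell\ge1$ and $h(x,y)\le K_0(x,y)$ off the diagonal, the elementary positive definiteness $\sum_{i,j}Z_\ell(x_i,x_j)=\sum_k\bigl|\sum_i\phi_{\ell,k}(x_i)\bigr|^2\ge0$ yields, for every configuration,
\[
  E_{K_0}(\omega_N)\ge\sum_{i\ne j}h(x_i,x_j)
  =\sum_{\ell=0}^m h_\ell\sum_{i,j}Z_\ell(x_i,x_j)-N\,h(1)
  \ge h_0\,N^2-N\,h(1),
\]
where $h(1)$ denotes the constant value of $h$ on the diagonal. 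It remains to construct, for each space, a degree-$m$ test function $h\le K_0$ whose deficit $I_{K_0}(\sigma)-h_0=\iint(K_0-h)\,d\sigma\,d\sigma$ is controlled: since $K_0-h$ is only large on the diagonal neighbourhood $\{\rho\lesssim1/m\}$, of volume $\sim m^{-D}$ and on which $K_0-h\sim\log m$, one expects $I_{K_0}(\sigma)-h_0\sim m^{-D}\log m$ and $h(1)\sim\log m$. Choosing $m\sim N^{1/D}$ makes both $(I_{K_0}(\sigma)-h_0)N^2$ and $N\,h(1)$ of order $N\log N$, giving $\mathcal{E}_{K_0}\ge I_{K_0}(\sigma)N^2-C_0N\log N$.

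The main obstacle is the lower bound. The upper bound needs only one construction and a single asymptotic evaluation, but the lower bound must hold for \emph{every} configuration, and in the critical case $s=0$ the construction of the test function $h$ is delicate: one has to approximate $-\log\rho$ from below by a polynomial of degree $m$ with nonnegative Jacobi coefficients while simultaneously keeping the diagonal value $h(1)$ and the deficit at the sharp orders $\log m$ and $m^{-D}\log m$. Carrying this out — and checking that the resulting constants are positive and balance correctly — has to be done uniformly over the four families $\mathbb{F}\in\{\mathbb{R},\mathbb{C},\mathbb{H},\mathbb{O}\}$, whose distinct Jacobi parameters govern both the coefficient decay of $K_0$ and the on-diagonal growth of $K_\tau$, and this is where the bulk of the technical work lies.
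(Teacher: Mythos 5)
Your architecture matches the paper's in outline (a determinantal upper bound plus a linear-programming lower bound), but in both halves the step you defer is exactly where the proof lives. On the lower bound, you reduce everything to constructing a degree-$m$ minorant $h\le K_0$ with nonnegative Jacobi coefficients, deficit $I_{K_0}(\sigma)-h_0\sim m^{-D}\log m$, and diagonal value $h(1)\sim\log m$, and you yourself flag this construction as the bulk of the work --- but you never produce it, and nothing in the proposal guarantees the two orders can be achieved simultaneously by a single $h$. The paper never constructs a polynomial at all: it exploits that $f_0(u)=-\tfrac12\log u$ is completely monotone in the squared chordal distance $u=\sin^2\vartheta(x,y)$ and uses the shifted Taylor minorant $F_{n,\delta}(u)=\sum_{k=0}^{n}\tfrac{\delta^k}{k!}(-1)^k f_0^{(k)}(u+\delta)\le f_0(u)$, whose summands are positive definite by Lemma~\ref{lem:absolutely_monotone} (absolute monotonicity in $t=1-2u$), so that Lemma~\ref{lem:Linear Prog Bound} applies directly. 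The deficit is then an explicit hypergeometric integral of order $\delta^{\alpha+1}$, the diagonal value is $-\tfrac12\log\delta+\mathcal{O}(1)$, and the single choice $\delta=N^{-1/(\alpha+1)}=N^{-2/D}$ yields $E_{K_0}(\omega_N)\ge I_{K_0}(\sigma)N^2-\tfrac1D N\log N-C_{0,D}N$ (Theorem~\ref{thm:lower-riesz}), uniformly over all four scalar fields and with the sharp constant $1/D$. This Taylor-with-remainder device is precisely the missing ingredient that discharges the obstacle you identify; without it (or an equivalent explicit minorant), your lower bound is a plan, not a proof.

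The upper bound has a quantitative gap as well. The harmonic ensemble only produces configurations with $N=\frac{(\alpha+\beta+2)_n(\alpha+2)_n}{(\beta+1)_n n!}\sim cn^{D}$, and your ``monotonicity/point-insertion'' interpolation is not harmless at this order: between consecutive admissible values the main term moves by $I_{K_0}(\sigma)\bigl(N_{n+1}^2-N_n^2\bigr)\sim N^{2-\frac1D}$, which swamps $N\log N$ for every $D\ge2$, so monotonicity of $\mathcal{E}_{K_0}(N)$ alone destroys the second-order term. Point insertion can be repaired --- since the potential $\int_\Omega K_0(x,y)\,d\sigma(y)=I_{K_0}(\sigma)$ is constant on a two-point homogeneous space, averaging supplies at each step a new point whose interaction with the current $k$ points is at most $kI_{K_0}(\sigma)$, so inserting costs at most $I_{K_0}(\sigma)(N^2-N_n^2)$ and preserves the $-cN\log N$ correction --- but that argument has to be made, and you do not make it. The paper sidesteps the issue: the all-$N$ statement of Theorem~\ref{thm:BoundsMinLogEnergy} is proved by jittered sampling on the equal-measure, diameter-bounded partitions of Proposition~\ref{prop: Projective Partition}, which gives $\mathcal{E}_{K_0}(N)\le I_{K_0}(\sigma)N^2-cN\log N$ for all sufficiently large $N$ directly (finitely many small $N$ are absorbed into the constant via $\mathcal{E}_{K_0}(N)\le N(N-1)I_{K_0}(\sigma)$), while the determinantal computation (Theorem~\ref{thm:log-energy-upper}, which requires the Hilb asymptotics \eqref{eq:jacobi-asymp} rather than your heuristic concentration argument) is reserved for the explicit constant at the special values of $N$.
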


\begin{theorem}\label{thm:BoundsMinGreenEnergy}
For each projective space $\mathbb{FP}^{d-1}$, there exist positive constants $C_G, C_G'$ such that for $N \geq 2$
\begin{equation*}
-C_G N^{2 -\frac{2}{\dim(\mathbb{FP}^{d-1})}} \leq \mathcal{E}_{G}(\mathbb{FP}^{d-1}, N) \leq -C_G' N^{2 -\frac{2}{\dim(\mathbb{FP}^{d-1})}},
\end{equation*}
unless $\mathbb{FP}^{d-1}= \mathbb{RP}^2$, in which case
\begin{equation*}
-C_G N\log(N) \leq \mathcal{E}_{G}(\mathbb{FP}^{d-1}, N) \leq -C_G' N \log(N).
\end{equation*}
\end{theorem}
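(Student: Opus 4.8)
The plan is to prove the two inequalities by different methods, exploiting the fact that the Green function has zero mean, $\int G(x,y)\,d\sigma(y)=0$, so that $I_G(\sigma)=0$ and no leading $N^2$ term is present; it therefore suffices to bound $\mathcal{E}_G$ directly. Throughout write $n:=\dim(\mathbb{FP}^{d-1})$. For the \emph{lower} bound I would use the heat-kernel representation $G(x,y)=\int_0^\infty\bigl(p_t(x,y)-1\bigr)\,dt$, where $p_t$ is the heat kernel and the subtracted $1$ is the stationary density with respect to $\sigma$. For an arbitrary configuration $\omega_N$ and any $t_0>0$, split the integral $E_G(\omega_N)=\int_0^\infty\sum_{i\neq j}\bigl(p_t(x_i,x_j)-1\bigr)\,dt$ at $t_0$. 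On $[t_0,\infty)$ the kernel $p_t-1=\sum_{k\ge 1}e^{-\lambda_k t}P_k$, with $P_k$ the projection kernel onto the $k$-th eigenspace, is positive definite, so $\sum_{i,j}(p_t(x_i,x_j)-1)\ge 0$ and hence $\sum_{i\neq j}(p_t(x_i,x_j)-1)\ge -N\bigl(p_t(o)-1\bigr)$, where $p_t(o):=p_t(x,x)$ is the configuration-independent on-diagonal value. On $(0,t_0)$ the pointwise positivity $p_t\ge 0$ gives the crude bound $\sum_{i\neq j}(p_t(x_i,x_j)-1)\ge -N(N-1)$. Combining,
\begin{equation*}
  E_G(\omega_N)\;\ge\;-N(N-1)\,t_0\;-\;N\int_{t_0}^\infty\bigl(p_t(o)-1\bigr)\,dt .
\end{equation*}
Inserting the classical short-time asymptotics $p_t(o)\sim c_n\,t^{-n/2}$ gives $\int_{t_0}^\infty(p_t(o)-1)\,dt\asymp t_0^{1-n/2}$ for $n>2$ and $\asymp\log(1/t_0)$ for $n=2$. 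Optimizing in $t_0$ then forces $t_0\asymp N^{-2/n}$ for $n>2$, yielding $E_G(\omega_N)\ge -C_G N^{2-2/n}$, whereas for $n=2$ (the case $\mathbb{RP}^2$) the choice $t_0\asymp 1/N$ produces $E_G(\omega_N)\ge -C_G N\log N$. As this holds for every $\omega_N$, it bounds $\mathcal{E}_G$ from below.

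For the \emph{upper} bound I would estimate the expected energy of the harmonic ensemble of Section~\ref{sec:rotat-invar-determ}, the determinantal point process whose kernel $K_N$ is the reproducing kernel of the eigenspaces up to degree $L$, normalized so that $K_N(x,x)\equiv N=\dim V_{\le L}$. Writing the expected energy of a determinantal process via its two-point correlation function,
\begin{equation*}
  \mathbb{E}\bigl[E_G(\omega_N)\bigr]=\iint G(x,y)\bigl(N^2-|K_N(x,y)|^2\bigr)\,d\sigma(x)\,d\sigma(y)=-\iint G(x,y)\,|K_N(x,y)|^2\,d\sigma(x)\,d\sigma(y),
\end{equation*}
where the constant term drops because $G$ has zero mean. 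Since $\mathcal{E}_G\le\mathbb{E}[E_G(\omega_N)]$, it remains to show $\iint G\,|K_N|^2\asymp N^{2-2/n}$ (respectively $\asymp N\log N$ when $n=2$). Expanding both the explicit Green function of Section~\ref{sec:green-function} and $|K_N|^2$ in the zonal (Jacobi) basis reduces this to a single series of nonnegative terms; the dominant contribution comes from degrees $\asymp L\asymp N^{1/n}$, where the eigenvalue weight is $\lambda_k^{-1}\asymp N^{-2/n}$, matching the spatial picture in which $|K_N|^2$ concentrates within geodesic distance $\asymp N^{-1/n}$ of the diagonal while $G$ behaves like $\rho(x,y)^{-(n-2)}$ there (like $-\log\rho$ when $n=2$). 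Carrying out this integral gives the claimed order. Finally, since the admissible values $N=\dim V_{\le L}$ increase by only a bounded factor between consecutive $L$, a standard addition- or removal-of-points argument extends the bound to all $N\ge 2$.

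The main obstacle is the precise evaluation of $\iint G\,|K_N|^2$: unlike the $L^2$-mass $\iint|K_N|^2=N$, the Green weight couples three zonal kernels, so a rigorous estimate requires the asymptotics of Jacobi polynomials near the diagonal (or exact linearization formulae) together with careful attention to the borderline case $n=2$, where the inverse-power singularity of $G$ degenerates into a logarithm and produces exactly the $\mathbb{RP}^2$ exception. By contrast, the heat-kernel lower bound is comparatively robust, once the on-diagonal short-time asymptotics $p_t(o)\sim c_n\,t^{-n/2}$ are established on each two-point homogeneous space.
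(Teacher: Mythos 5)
Your proposal is sound, and it splits into a half that reproduces the paper's own argument and a half that takes a genuinely different route. The lower bound is essentially the paper's proof: your splitting of $\int_0^\infty\sum_{i\neq j}\bigl(p_t(x_i,x_j)-1\bigr)\,dt$ at $t_0$, using pointwise nonnegativity of the heat kernel on $(0,t_0)$ (Theorem~\ref{thm:PosHeatKernel}) and spectral positive semi-definiteness of $p_t-1$ on $[t_0,\infty)$, is exactly Lemma~\ref{lem:ElkiesLemma} (that is, $G\geq G_t-t$) combined with the inequality $\sum_{j\neq i}G_{2t}(x_j,x_i)\geq -N\,G_{2t}(x,x)$ from the proof of Theorem~\ref{thm:green-LB}; the on-diagonal input $p_t(x,x)=\sum_k m_k e^{-\lambda_k t}\asymp t^{-D/2}$ is what Lemmas~\ref{lemma:approxGeneralC} and~\ref{lemma:approxCneq1} establish, and your optimization $t_0\asymp N^{-2/D}$ is the paper's choice $t\asymp N^{-1/(\alpha+1)}$. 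The only cosmetic difference is that the paper derives the $\mathbb{RP}^2$ lower bound instead from the logarithmic-energy bound of Theorem~\ref{thm:lower-riesz} via the exact identity $G=K_0-I_{K_0}(\sigma)$ on $\mathbb{RP}^2$ (Theorem~\ref{thm:lower-Green-RP2}); your uniform heat-kernel treatment with $t_0\asymp N^{-1}$ works just as well.

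For the upper bound you diverge: the theorem as stated, for all $N\geq2$, is proved in the paper by jittered sampling over the Gigante--Leopardi equal-area partitions of Section~\ref{sec:upper-bounds-jittered} (Proposition~\ref{prop: Projective Partition}), and the harmonic ensemble of Section~\ref{sec:upper-bounds-riesz} is used only to sharpen constants at the special values $N=\K_n^{(\alpha,\beta)}(1)$, cf.\ \eqref{eq:green-expected-N} and \eqref{eq:RP2green-expected-N}. Your route (harmonic ensemble plus interpolation) is also viable, with two steps made precise. For the interpolation, the clean mechanism is that $\mathcal{E}_G(N)/(N(N-1))$ is nondecreasing in $N$ (average the energy of an optimal $(N+1)$-point configuration over its $N$-point subsets); since the admissible values $N\asymp n^{D}$ have consecutive ratios tending to $1$, the bound transfers to all $N$ at the cost of a constant. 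For the asymptotics of $\iint G\,|\K_n^{(\alpha,\beta)}|^2\,d\sigma\,d\sigma$, which you flag as the main obstacle, the order is softer than you fear when $D>2$: the kernel $|\K_n^{(\alpha,\beta)}(x,y)|^2$ is a sum of rank-one kernels, hence positive definite, so its zonal Jacobi coefficients are nonnegative (Theorem~\ref{thm:PosDefCoefficients}) and supported in degrees $k\leq 2n$; their weighted total is the diagonal value $N^2$, with the $k=0$ term contributing $\iint|\K_n^{(\alpha,\beta)}|^2=N$, whence
\begin{equation*}
  \iint G\,|\K_n^{(\alpha,\beta)}|^2\,d\sigma\,d\sigma\;\geq\;\frac{N^2-N}{\lambda_{2n}}\;\gtrsim\;N^{2-\frac2D},
\end{equation*}
with no Hilb asymptotics needed. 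This soft argument genuinely fails at the borderline $D=2$, where it yields only $-cN$ instead of $-cN\log N$; there the refined computation is unavoidable, and the paper handles it through the identity $G=K_0-I_{K_0}(\sigma)$ on $\mathbb{RP}^2$ together with the logarithmic-energy expectation of Theorem~\ref{thm:log-energy-upper}. In short: your lower bound matches the paper; your upper bound trades the partition-based argument (all $N$ directly, inexplicit constants) for a determinantal-plus-monotonicity argument (explicit constants on a subsequence, then interpolation) --- both are valid, and the paper in fact runs both, using each for what it buys.
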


The order of the upper bounds for each of these results is proved in Section
\ref{sec:upper-bounds-jittered} through jittered sampling using equal area
partitions with some extra control on the diameters. Such partitions exist on
general Ahlfors regular metric measure spaces by
\cite{Gigante_Leopardi2017:diameter_bounded_equal}.  However, without a deeper
understanding of the geometry of these spaces, the method of jittered sampling
does not give explicit values for the constants $C_s'$, $C_0'$, and $C_G'$ in
the above theorems. In Section~\ref{sec:upper-bounds-riesz}, we compute the
expected Riesz, logarithmic, and Green energies of the harmonic ensemble. This
provides a more concrete upper bound on the minimal energies, with an
explicit constant for the next-order term and the order of the error term,
though only for certain values of $N$ (see \eqref{eq:Riesz-expected-N},
\eqref{eq:energy-log-N}, \eqref{eq:green-expected-N}, and
\eqref{eq:RP2green-expected-N}). In addition, we compute the expected Riesz
$s$-energy for $s=\dim\mathbb{FP}^{d-1}$ of this ensemble, resulting in an
asymptotic upper bound on the minimum of this hypersingular energy. In
Section~\ref{subsec:lowerbounds}, we determine the order of the lower bounds
for the Riesz and logarithmic energies through linear programming using the
complete monotonicity of the corresponding kernels as a function of chordal
distance. Finally, in Section~\ref{sec:lower-bounds-green} we give lower bounds
for the Green energy, with explicit values of the next-order term. We achieve
these lower bounds again via linear programming, this time making use of lower
bounds for the Green function obtained from the positivity of the heat kernel.

We collect our explicit upper and lower bounds for the minimal discrete Green
energies on projective spaces, with the lower bounds holding for all $N$ and
the upper bound holding only for certain values of $N$, in Table
\ref{table:LowerAndUpperBounds} below.

\begin{table}[h]
  \renewcommand{\arraystretch}{2}
   \begin{tabular}{p{1.2cm}|p{5.216cm}|p{5.216cm}}
$\Omega$ & lower bound & upper bound\\\hline
$\mathbb{RP}^{2}$ & $- \frac{1}{2} N \log(N) +\mathcal{O}(N)$ & $- \frac{1}{2} N \log(N) +\mathcal{O}(N)$ \\\hline
$\mathbb{RP}^{3}$ & $- \frac{3}{4}\left( \pi \right)^{\frac{1}{3}} N^{2 -\frac{2}{3}}+\mathcal{O}(N \log(N))$ & $- \frac{9}{16}\left(\frac{4}{3}\right)^{\frac{2}{3}} N^{2 -\frac{2}{3}}+\mathcal{O}(N)$ \\\hline
$\mathbb{RP}^{d-1}$ & $- \frac{d-1}{4(d-3)}\left(\frac{\sqrt{\pi}}{\Gamma(d/2)}
\right)^{\frac{2}{d-1}} N^{ 2 -\frac{2}{d-1}}$ $+\mathcal{O}(N^{2-\frac{3}{d-1}})$
& $-
\frac{(d-1)^2}{8(d-2)(d-3)}\left(\frac{\sqrt{\pi}}{\Gamma(\frac{d}{2})\Gamma(\frac{d+1}{2})}\right)^{\frac{2}{d-1}}$ $\times
N^{2-\frac{2}{d-1}}+
\mathcal{O}(N^{2-\frac{3}{d-1}})$ \\\hline
$\mathbb{CP}^{d-1}$ & $-
\frac{d-1}{4(d-2)}\big(\frac{1}{\Gamma(d)}\big)^{\frac{1}{d-1}}
N^{2-\frac{2}{2d-2}}$ $+ \mathcal{O}(N^{2-\frac{3}{2d-2}})$
& $- \frac{(d-1)}{4(2d-3)}\left(\frac{1}{\Gamma(d)}\right)^{\frac{2}{d-1}}
 N^{2-\frac{2}{2d-2}}$ $+\mathcal{O}(N^{2-\frac{3}{2d-2}})$ \\\hline
$\mathbb{HP}^{d-1}$ & $-
\frac{d-1}{2(2d-3)}\big(\frac{1}{\Gamma(2d)}\big)^{\frac{1}{2d-2}} N^{2-
  \frac{2}{4d-4}} $ $+ \mathcal{O}(N^{2-\frac{3}{4d-4}})$ & $-
\frac{(d-1)^2}{(2d-3)(4d-5)}\!\left(\!\frac{1}{\Gamma(2d)
    \Gamma(2d-1)}\!\right)^{\frac{1}{2d-2}} $ $\times N^{2-\frac{2}{4d-4}}+ \mathcal{O}(N^{2-\frac{3}{4d-4}})$\\\hline
$\mathbb{OP}^{2}$ & $-\frac{2}{7} \big(\frac{6}{11!}\big)^{\frac{1}{8}}
N^{2-\frac{2}{16}} + \mathcal{O}(N^{2-\frac{3}{16}})$  & $-
\frac{16}{21}\left(\frac{6}{(11!) (8!)}\right)^{\frac{1}{8}} N^{2-\frac{2}{16}}
$ $+\mathcal{O}(N^{2-\frac{3}{16}})$ 
\end{tabular}
\caption{Lower and upper bounds for the minimal discrete Green energy on projective spaces in terms of the number of points $N$. The upper bound only holds for
  $N=\frac{(\alpha+\beta+2)_n(\alpha+2)_n}{(\beta+2)_nn!}$ (defined in \ref{sec:class-two-point})}\label{table:LowerAndUpperBounds}
\end{table}

For the complex projective spaces $\mathbb{CP}^{d-1}$ with $d > 4$, this upper
bound is an improvement upon the previously best known upper bound
\begin{equation*}
\mathcal{E}_G(N) \leq - \frac{d-1}{4(d-2)} \Big(\frac{1}{(d-1)!}\Big)^{\frac{1}{d-1}} N^{2 - \frac{2}{2d-2}}
\end{equation*}
for $N= \binom{d+n-1}{n}$ \cite[Theorem
1.3]{Beltran_Etayo2018:projective_ensemble_distribution}. We note that in their
paper, Beltr\'an and Etayo took the volume of the $\mathbb{CP}^{d-1}$ to be
$\frac{\pi^{d-1}}{(d-1)!}$, so we have adjusted their result to match our
normalization (volume being $1$). This upper bound was achieved through a
determinantal point process on the complex space $\mathbb{C}^{d-1}$ with a
kernel constructed from functions on this space, and then mapping this process
to the complex projective space $\mathbb{CP}^{d-1}$ via the map
$z \mapsto (1,z)$, creating the \textit{projective ensemble}. As pointed out,
our upper bound also comes from a determinantal point process, but the kernel
in this instance is built directly from functions on $\mathbb{CP}^{d-1}$. The
rotational invariance that results from this seems to lead to an improvement,
however, it also means that our bound holds for different values of $N$
($N= \frac{((d-1)!)^2}{n+1} \binom{d+n-1}{n}^2$).

Similarly, our estimates for the minimal Riesz and logarithmic energies on complex projective spaces resulting from the harmonic ensemble (\eqref{eq:Riesz-expected-N} and \eqref{eq:energy-log-N}) generally match or improve upon previously known results. A first instance of applying point processes to obtain estimates for the Riesz and logarithmic energies on $\mathbb{CP}^{d-1}$ is \cite{Feng_Zelditch2013:random_riesz_kahler}. There, Feng and Zelditch studied the expectation of these energies for the zero set of $d-1$ degree $m$ Gaussian random polynomials. They obtain the correct main term and the correct order of the second asymptotic term for $0\leq s\leq\min(4,D)$; nevertheless, their second order term becomes positive for $s$ close to $4$.

More recently, Beltr\'{a}n and Etayo also applied their projective ensemble to find estimates for the minimal Riesz and logarithmic energies on $\mathbb{CP}^{d-1}$. For $0 < s < 2d-2$ and $N= \binom{d+n-1}{n}$, they obtained the upper bound \cite[Theorem 3.3]{Beltran_Etayo2018:projective_ensemble_distribution})
\begin{equation*}
\mathcal{E}_{K_s}(N) \leq I_{K_s}(\sigma)N^2 -\frac{(d-1) \Gamma\left(d-1-\frac{s}{2}\right)}{\left(\Gamma(d)\right)^{1-\frac{s}{2d-2}}}N^{1+\frac{s}{2d-2}} + o(N^{1+ \frac{s}{2d-2}}).
\end{equation*}
Numerical evidence strongly suggests that the coefficient of the next-order term in this bound and the one we obtain in Theorem \ref{thm:Riesz-expected} equate at some unique value $s = s_{d-1}$ in each dimension, with their bound winning for values below $s_{d-1}$ and ours being better for values above $s_{d-1}$. These values $s_{d-1}$ appear to be bounded from above by some constant $s^{*}$. Assuming such an $s^{*}$ exists, we believe $s^{*} \approx 6.0365$, which we obtain by solving
\begin{equation*}
e^{-\frac{s^{*}}{2}}\frac{\Gamma \left(1+s^{*}\right)}{\Gamma \left(1+\frac{s^{*}}{2}\right)^2}=1,
\end{equation*}
which results from letting $d$ to tend to infinity in the ratio of the two coefficients and assuming them to be equal. In particular, our bound appears to represent an improvement for all $s$ exceeding a fixed value  independent of dimension.

We also provide the expected logarithmic energy of our harmonic ensemble on $\mathbb{CP}^{d-1}$ in Theorem \ref{thm:log-energy-upper}, achieving the same next order term, $- \frac{1}{2d-2} N \log(N)$, as achieved by the expected logarithmic energies in \cite[Corollary 3.4]
{Beltran_Etayo2018:projective_ensemble_distribution} and \cite[Corollary 1]{Feng_Zelditch2013:random_riesz_kahler}. The lower bound we determine in Theorem \ref{thm:lower-riesz} shows that this is indeed the best possible coefficient for the second order term.

\subsection{Notation}\label{sec:notation}
Throughout the paper we will use the following notations:
\begin{itemize}
\item The \emph{Pochhammer symbol}
  \begin{equation*}
    (x)_k=x(x+1)\cdots(x+k-1)=\frac{\Gamma(x+k)}{\Gamma(x)},
  \end{equation*}
\item the \emph{digamma function}
  \begin{equation*}
    \psi(x)=\frac{\Gamma'(x)}{\Gamma(x)}=-\gamma-\frac1x+
    \sum_{n=1}^\infty\left(\frac1n-\frac1{n+x}\right),
  \end{equation*}
\item the \emph{harmonic numbers}
  \begin{equation*}
    H_k=\sum_{\ell=1}^k\frac1\ell=\psi(k+1)+\gamma,
  \end{equation*}
\item where
  \begin{equation*}
    \gamma=-\Gamma'(1)=\lim_{k\to\infty}H_k-\log(k)
  \end{equation*}
  denotes the \emph{Euler-Mascheroni constant}.
\item We will also make frequent use of the asymptotic relations
\begin{equation*}
  \frac{\Gamma(n+x)}{\Gamma(n+y)}=n^{x-y}
  \left(1+\BigOh\left(\frac1n\right)\right)\text{as }n\to\infty
\end{equation*}
and
\begin{equation*}
  \binom{n+x}n=
  \frac{n^x}{\Gamma(x+1)}\left(1+\BigOh\left(\frac1n\right)\right)
  \text{as }n\to\infty.
\end{equation*}
\item We will denote the set of finite Borel measures on a space $\Omega$ as
  $\mathcal{B}(\Omega)$, the set of Borel probability measures as
  $\mathbb{P}(\Omega)$, the set of finite signed Borel measures as
  $\mathcal{M}(\Omega)$, and the set of finite signed Borel measures with total
  mass zero, i.e. $\nu \in \mathcal{M}(\Omega)$ satisfying $\nu(\Omega) = 0$,
  as $\mathcal{Z}(\Omega)$.
\end{itemize}
\subsection{Jacobi polynomials}\label{sec:jacobi-polynomials}
The classical Jacobi polynomials will play a prominent role in this paper. Thus
we collect some basic facts about them. The \emph{Jacobi polynomials}
$P_n^{(\alpha,\beta)}(t)$ are the orthogonal polynomials for the weight
function $(1-t)^\alpha(1+t)^\beta$ on the interval $[-1,1]$. Throughout the
paper we will use the substitution $t=\cos(2\theta)$. The measure is then
normalised and transformed to a measure on the interval $[0,\frac\pi2]$ that we
denote by
\begin{equation}\label{eq:nualpha}
  d\nu^{(\alpha,\beta)}(\theta)=
  \frac1{\gamma_{\alpha,\beta}}\sin(\theta)^{2\alpha+1}\cos(\theta)^{2\beta+1}\,
  d\theta,
\end{equation}
where
\begin{equation*}
  \gamma_{\alpha,\beta}=\frac{\Gamma(\alpha+1)\Gamma(\beta+1)}
  {2\Gamma(\alpha+\beta+2)}.
\end{equation*}
The Jacobi polynomials can be given
by Rodrigues' formula
(see~\cite[p.~213]{Magnus_Oberhettinger_Soni1966:formulas_theorems_special})
\begin{equation*}
  P_n^{(\alpha,\beta)}(t)=\frac{(-1)^n}{2^n n!}\frac1{(1-t)^\alpha(1+t)^\beta}
  \frac{d^n}{dt^n}(1-t)^{n+\alpha}(1+t)^{n+\beta}.
\end{equation*}
The value
\begin{equation*}
  P_n^{(\alpha,\beta)}(1)=\binom{n+\alpha}n
\end{equation*}
and the relation
\begin{equation*}
    \int_0^{\frac\pi2}\left(P_n^{(\alpha,\beta)}(\cos(2\theta))\right)^2\,
    d\nu^{(\alpha,\beta)}(\theta)
    = \frac{\alpha+\beta+1}{2n+\alpha+\beta+1}
    \frac{(\alpha+1)_n(\beta+1)_n}{n!(\alpha+\beta+1)_n}
\end{equation*}
will occur frequently throughout.

We will use the summation formula 
\begin{equation}
  \label{eq:Jacobi-sum}
  \sum_{k=0}^n\frac{2k+\alpha+\beta+1}{\alpha+\beta+1}
  \frac{(\alpha+\beta+1)_k}{(\beta+1)_k}P_k^{(\alpha,\beta)}(t)=
  \frac{(\alpha+\beta+2)_n}{(\beta+1)_n}P_n^{(\alpha+1,\beta)}(t)
\end{equation}
at several occasions; this is a special case of a connection formula for
Jacobi polynomials with different parameters given in
\cite[Theorem~7.1.3]{Andrews_Askey_Roy1999:special_functions}.

Furthermore the orthogonality of the Jacobi polynomials allows expanding
functions $F(\cos(2\theta))\in L^2([0,\frac\pi2],d\nu^{(\alpha, \beta)})$ in
terms of $P_n^{(\alpha, \beta)}$:
\begin{equation}\label{eq:JacobiExpansion}
  F(t)=\sum_{n=0}^\infty \widehat{F}(n) P_n^{(\alpha, \beta)}(t),
\end{equation}
where
\begin{equation}\label{eq:JacobiCoeff}
\widehat{F}(n)=\frac{m_n}{\left(P_n^{(\alpha,\beta)}(1)\right)^2}
\int\limits_{0}^{\frac{\pi}{2}} F(\cos(2\theta))
  P_n^{(\alpha, \beta)}(\cos(2\theta))\,d\nu^{(\alpha, \beta)}(\theta)
\end{equation}
and
\begin{equation*}
  m_n=\frac{2n+\alpha+\beta+1}{\alpha+\beta+1}
\frac{(\alpha+\beta+1)_n(\alpha+1)_n}{n!(\beta+1)_n}.
\end{equation*}
The convergence of the series \eqref{eq:JacobiExpansion} is \emph{a priori} in
the $L^2$-sense. In the case that $F$ is continuous on $[-1,1]$ and all the
coefficients $\widehat{F}(n)$ are non-negative, Mercer's theorem (see, for
instance \cite{Ferreira_Menegatto2009:eigenvalues_integral_operators}) ensures
absolute and uniform convergence.


\section{Harmonic analysis on two-point homogeneous
  spaces}\label{sec:harm-analys-two}

\subsection{Classification of two-point homogeneous spaces}
\label{sec:class-two-point}

We call a connected Riemannian manifold $(\Omega, p)$ \emph{homogeneous} if there is a
Lie group $G$ acting transitively on $\Omega$. 
This implies that $\Omega$ is homeomorphic to the quotient space $G/G_a$, where
$G_a : = \{ g \in G : ga = a \}$ is the stabilizer of a point $a \in
\Omega$. The choice of $a \in \Omega$ does not matter in this instance, as all
stabilizers are conjugate by transitivity. Let $\vartheta_p$ be the metric
induced by the metric tensor $p$. The metric induces a volume form, which we
normalize to obtain the normalized surface measure $\sigma$. If $G$ acts by
isometries this equals the measure induced by the Haar-measure on $G$.

If $G$ is the isometry group of the homogeneous space $\Omega$, we call
$\Omega$ \emph{two-point homogeneous} if for all
$x_1, x_2, y_1, y_2 \in \Omega$ with
$\vartheta_p(x_1, x_2) = \vartheta_p(y_1, y_2)$, there is an isometry $g \in G$
such that $ g x_i = y_i $, $ i=1,2 $. All two-point homogeneous Riemannian
manifolds have been classified
(see~\cite[Chapter~I.4]{Helgason2000:Groups_geometric_analysis}). The
noncompact spaces are the Euclidean spaces $\mathbb{R}^d$, the real, complex,
and quaternionic hyperbolic spaces, and the hyperbolic analogue of the Cayley
plane \cite{Tits1955:Two_Point_Non_Compact}.  The only compact connected
two-point homogeneous Riemannian manifolds are the real unit spheres
$ \mathbb{S}^{d-1}$, the real projective spaces $ \mathbb{RP}^{d-1} $, the
complex projective spaces $ \mathbb{CP}^{d-1} $, the quaternionic projective
spaces $ \mathbb{HP}^{d-1} $, and the Cayley projective plane $ \mathbb{OP}^2 $
(see~\cite{Wang1952:Two_Point_Compact}), with the quotient representations
given in \cite[pp.~28-29]{Wolf2007:harmonic_analysis_commutative}
\begin{equation*}
\begin{aligned}
\mathbb{S}^{d-1} & \cong \mathrm{SO}(d)/\mathrm{SO}(d-1) \\
\mathbb{RP}^{d-1}  & \cong \mathrm{O}(d)/
\Big( \mathrm{O}(d-1)\times \mathrm{O}(1) \Big),\\
\mathbb{CP}^{d-1} & \cong \mathrm{U}(d)/
\Big( \mathrm{U}(d-1)\times \mathrm{U}(1) \Big),\\
\mathbb{HP}^{d-1} & \cong \mathrm{Sp}(d)/
\Big( \mathrm{Sp}(d-1)\times \mathrm{Sp}(1) \Big), \\
\mathbb{OP}^2 & \cong F_4/\mathrm{Spin}(9).
\end{aligned}
\end{equation*}
When talking about the projective spaces in general we denote the scalar domain
by $\FF$.

Note that it suffices to consider $\mathbb{FP}^{d-1}$ for $ d > 2 $ only, as
$ \mathbb{FP}^1 $ is isomorphic to the sphere
$\mathbb{S}^{\dim_{\mathbb{R}} (\FF)} $
(see~\cite[p.~170]{Baez2002:Octonions}),  so those will not be considered in
what follows.

For each two-point homogeneous space with underlying scalar
domain $\FF$, we associate parameters
\begin{equation}
    \label{eq:JacobiAlphaBeta}
    \alpha = (d-1) \frac{\dim_{\mathbb{R}} (\FF)}{2}-1, \; \; \; \;
    \beta = \begin{cases}
        \alpha, & \text{ for } \Omega = \mathbb{S}^{d-1};\\
        \frac{\dim_{\mathbb{R}}(\FF)}{2} -1, &
        \text{ for } \Omega = \mathbb{FP}^{d-1}.
    \end{cases}
\end{equation}
The dependence of $\alpha$ and $\beta$ on the space and its dimension will be
clarified in Section~\ref{sec:irred-repr}. Furthermore, we denote
$D=\dim(\Omega)=2\alpha+2$ the dimension of the space $\Omega$ as a real
manifold.

From now on, $\Omega$ always refers to a two-point homogeneous space, equipped with metric tensor $p$ and corresponding
$G$-invariant probability measure $\sigma$, i.e. the normalized uniform surface
measure. We let $ \vartheta $ denote the geodesic distance, normalized to take
values in $ [0, \frac{\pi}{2\kappa}]$, where $\kappa=\frac12$ or $\kappa=1$, if
$\Omega$ is a sphere or projective space, respectively. We can also define a
\textit{chordal metric} $\rho$ on each of these spaces by
\begin{equation}\label{eq:Chordal Metric Def}
  \rho(x,y) = \sin(\kappa\vartheta(x,y)) =
  \sqrt{\frac{1 -\cos(2 \kappa \vartheta(x,y))}{2}},
  \quad x,y \in \Omega.
\end{equation}
Note that on the sphere, this is the Euclidean distance $ \frac12\| x - y\|$ in
ambient space, and on the complex projective space this is also known as the \textit{Fubini-Study metric}.

Each projective space $\mathbb{FP}^{d-1}$ can be canonically embedded into the
unit sphere $\mathbb{S}^{\tilde{d}-1}$, where
$\tilde{d} = d(\alpha + 2)=\frac{d(d-1)}{2}\dim_{\mathbb{R}}(\FF)+d$, so that the chordal
metric is equivalent to the Euclidean metric on this embedding, which we will
now show.

Let $\mathcal{H}(\FF^{d})$ be the set of all Hermitian $d \times d$ matrices
with entries in $\FF$. We see that $\mathcal{H}(\FF^d)$ is a linear space over
$\mathbb{R}$ of dimension $\tilde{d}$, equipped with the symmetric real-valued
inner product
\begin{equation*}
  \langle A, B \rangle = \frac{1}{2} \mathrm{tr}(A\overline{B^\trans} +
  B\overline{A^\trans}) =
  \Re(\mathrm{tr}(A\overline{B^\trans})) =
  \Re \sum_{i,j = 1}^{d} a_{i,j} \overline{b_{i,j}}
\end{equation*}
and norm
\begin{equation}\label{eq:Proj Space Norm}
\| A \|_{\mathcal{H}(\FF^{d})} =  ( \mathrm{tr}(A \overline{A^\trans}) )^{\frac{1}{2}} =\Big( \sum_{i,j=1}^{d} |a_{i,j}|^2 \Big)^{\frac{1}{2}}.
\end{equation}

For $\FF \neq \mathbb{O}$, the orthogonal projector
$\Pi_x \in \mathcal{H}(\FF^d)$ ($x \in \FF^d$, $\|x\| = 1$) onto
a one-dimensional subspace $x \FF$, can be given by the matrix
$\Pi_x = (x_i \overline{x}_j)_{1 \leq i, j \leq d}$, with
$x = (x_1, \ldots, x_d)$. Thus, the projective space can be written as
\begin{equation}\label{eq:AssociataiveProjModel}
\mathbb{FP}^{d-1} \cong \{ \Pi \in \mathcal{H}(\FF^d) : \Pi^2 = \Pi, \mathrm{Tr}(\Pi) = 1 \}.
\end{equation}
The group of isometries $U(d, \FF)$ acts on these projectors by $g(\Pi) = g \Pi g^{-1}$.

For the Cayley plane, a similar model (as well as a detailed discussion) is
given in \cite{Baez2002:Octonions, Freudenthal1953:Octonions}. In this model,
one defines the Cayley plane by
\begin{equation}\label{eq:NonassociataiveProjModel}
\mathbb{OP}^2 \cong \{ \Pi \in \mathcal{H}(\mathbb{O}^3) : \Pi^2 = \Pi, \mathrm{Tr}(\Pi) = 1 \}.
\end{equation}
Each matrix can be written as
$\Pi_x = ( x_i \overline{x_j})_{1 \leq i,j \leq 3}$, for a vector
$x = (x_1, x_2, x_3) \in \mathbb{O}^3$ with
$\|x \|^2 = |x_1|^2 + |x_2|^2 + |x_3|^2 = 1$ and $(x_1 x_2) x_3 = x_1(x_2 x_3)$
\cite[Lemma~14.90]{Harvey1990:Spinors_Cal}.

Equations \eqref{eq:Proj Space Norm}, \eqref{eq:AssociataiveProjModel}, and
\eqref{eq:NonassociataiveProjModel} show us that for any
$\Pi \in \mathbb{FP}^{d-1}$, as defined by the above models,
\begin{equation*}
\| \Pi \|_{\mathcal{H}(\FF^{d})}^2 = \mathrm{Tr}(\Pi^2) = \mathrm{Tr}(\Pi) = 1, 
\end{equation*}
so the projective spaces are submanifolds in the unit sphere
\begin{equation*}
\mathbb{FP}^{d-1} \subset \{ \Pi \in \mathcal{H}(\FF^d): \| \Pi \|_{\mathcal{H}(\FF^{d})} = 1 \} \subset \mathcal{H}(\FF^d) \cong \mathbb{R}^{\tilde{d}}.
\end{equation*}
This provides an embedding $A$ of $\mathbb{FP}^{d-1}$ into the sphere $\mathbb{S}^{\tilde{d}-1}$. The chordal metric $\rho( \Pi_1, \Pi_2)$, for $\Pi_1, \Pi_2 \in \mathbb{FP}^{d-1}$, is then defined as the Euclidean distance in the embedding
\begin{equation*}
  \begin{split}
    \rho(\Pi_1, \Pi_2) &= \sqrt{ 1 - \langle \Pi_1 , \Pi_2 \rangle } =
    \frac{1}{\sqrt{2}} \| \Pi_1 - \Pi_2\|_{\mathcal{H}(\FF^{d})} \\
    &=    \frac{1}{\sqrt{2}} \| A(\Pi_1) - A(\Pi_2) \|.
  \end{split}
\end{equation*}

\subsection{The Laplace operator and its eigenfunctions}
\label{sec:laplace-operator-eigen}

Let $\Laplace_\Omega$ be the Laplace-Beltrami operator on $\Omega$ induced by
the Riemannian metric $p$, and let
$0= \lambda_0 < \lambda_1 < \cdots$ be the eigenvalues of $\Laplace_\Omega$ and
for each $k \in \mathbb{N}$, let $V_k : = V( \Laplace_\Omega, \lambda_k)$ be
the corresponding eigenspace and $m_{k} = \dim(V_k)$ be the multiplicity of
$\lambda_k$. Notice that we follow the convention of geometry choosing the sign
of the operator so that the eigenvalues are non-negative.

We then have the following version of the Spectral Theorem
\cite[Chapter~3, Theorem~1.3, and
Remark~1.2]{Craioveanu_Puta_Rassias2001:Spectral_Geometry}

\begin{theorem}\label{thm:Eigen Decomposition L2}
  The eigenvalues of $\Laplace_\Omega$ can be arranged in increasing order
  $0=\lambda_0 < \lambda_1 < \cdots$, where
  $\lim_{k \rightarrow \infty} \lambda_k = \infty$, each eigenspace $V_k$ has
  finite dimension, and
\begin{equation*}
  L^2(\Omega, \sigma) =
  \overline{\bigoplus_{k=0}^{\infty} V_k}.
\end{equation*}
\end{theorem}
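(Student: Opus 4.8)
The plan is to realize $\Laplace_\Omega$ as a nonnegative self-adjoint unbounded operator on $L^2(\Omega,\sigma)$ whose resolvent is compact, and then to invoke the spectral theorem for compact self-adjoint operators. The structural facts I would lean on are that $\Omega$, being a compact connected two-point homogeneous space, is a closed (compact, boundaryless) Riemannian manifold, and that $\Laplace_\Omega$ is a second-order elliptic differential operator. Green's identity on a closed manifold gives, for smooth $u,v$,
\begin{equation*}
  \langle \Laplace_\Omega u, v\rangle_{L^2} = \langle \nabla u, \nabla v\rangle_{L^2},
\end{equation*}
so $\Laplace_\Omega$ is symmetric and nonnegative on $C^\infty(\Omega)$; in particular every eigenvalue is a nonnegative real number, and $\Laplace_\Omega u = 0$ forces $\nabla u = 0$, whence $u$ is constant because $\Omega$ is connected. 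This already identifies $\lambda_0 = 0$ as the smallest eigenvalue with $V_0$ the one-dimensional space of constants, and shows distinct eigenvalues can be listed in strictly increasing order once multiplicities are collected.

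Next I would study the resolvent $T = (\Laplace_\Omega + I)^{-1}$. The operator $\Laplace_\Omega + I$ is coercive, so by Lax--Milgram (or by the standard weak formulation) it is boundedly invertible from $L^2$ onto the domain; the key point is the gain of regularity. Using Gårding's inequality together with interior elliptic estimates, a solution of $(\Laplace_\Omega + I)u = f$ with $f \in L^2(\Omega)$ satisfies $\|u\|_{H^2} \leq C\|f\|_{L^2}$, so $T$ maps $L^2(\Omega)$ boundedly into the Sobolev space $H^2(\Omega)$. Here compactness of $\Omega$ enters decisively: the Rellich--Kondrachov embedding $H^2(\Omega) \hookrightarrow L^2(\Omega)$ is compact, so $T$, viewed as an operator on $L^2(\Omega)$, is compact. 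It is also self-adjoint and injective, and positive since $\langle T f, f\rangle = \langle u, (\Laplace_\Omega + I)u\rangle = \|\nabla u\|^2 + \|u\|^2 > 0$ for $f \neq 0$.

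Finally I would apply the Hilbert--Schmidt spectral theorem to $T$: there is an orthonormal basis of $L^2(\Omega,\sigma)$ consisting of eigenfunctions of $T$, with positive eigenvalues $\mu_j$ of finite multiplicity whose only possible accumulation point is $0$. Since $T u = \mu u$ is equivalent to $\Laplace_\Omega u = (\tfrac1\mu - 1)u$, the eigenfunctions of $T$ are exactly the eigenfunctions of $\Laplace_\Omega$, the eigenvalues are $\lambda = \tfrac1\mu - 1 \geq 0$, each eigenspace $V_k$ is finite-dimensional, and $\mu_j \to 0$ translates into $\lambda_k \to \infty$. Grouping the eigenfunctions by distinct eigenvalue yields the strictly increasing list $0 = \lambda_0 < \lambda_1 < \cdots$, and completeness of the eigenbasis is precisely the orthogonal decomposition $L^2(\Omega,\sigma) = \overline{\bigoplus_{k=0}^\infty V_k}$. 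I expect the analytic heart of the argument — and the main obstacle — to be the compactness of the resolvent, i.e. the combination of elliptic regularity (to land in $H^2$) with the Rellich compactness theorem (which is where closedness and compactness of $\Omega$ are genuinely used); the remaining steps are formal consequences of the spectral theorem for compact self-adjoint operators.
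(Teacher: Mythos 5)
Your argument is correct, and it is the standard proof of this result: realizing $\Laplace_\Omega$ as a nonnegative self-adjoint operator, obtaining compactness of the resolvent from elliptic $H^2$-regularity together with the Rellich--Kondrachov embedding, and then applying the spectral theorem for compact self-adjoint operators. The paper gives no proof of its own here---it cites \cite[Chapter~3, Theorem~1.3, and Remark~1.2]{Craioveanu_Puta_Rassias2001:Spectral_Geometry}, where the argument runs along exactly these lines---so your proposal reconstructs essentially the same approach.
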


For any Riemannian manifold $M$, we can use geodesic polar coordinates
$(r, \theta_1,\ldots, \theta_{\dim_{\mathbb{R}}(M)-1})$ to parametrize a
sufficiently small neighborhood $U$ of any point $a$, giving $U$ a positive
orientation and having a radial component $r(x)$ which is the distance of
$x \in M$ from $a$, as described in
\cite[Chapter~2]{Berger_Gauduchon_Mazet1971:le_spectre_dune},
\cite[Chapter~2.4]{Rosenberg1997:Laplace_Riemannian_Manifolds}, and
\cite[Chapters~IX.5, X.4, and~X.7.4]{Helgason1962:Diff_Geo_Sym_Spaces}.

In particular, on a two-point homogeneous space $\Omega$,
such a polar coordinate parameterization can be defined on
$ \Omega_a : = \Omega \setminus \mathbb{S}_{\Omega}(a, \frac{\pi}{2\kappa})$,
where $ \mathbb{S}_{\Omega}(a,r) = \{ x \in \Omega: \vartheta(a, x) = r\}$ for
$r \in [0, \frac{\pi}{2\kappa}]$ \cite[Chapters~IX.5, X.4,
and~X.7.4]{Helgason1962:Diff_Geo_Sym_Spaces}. This allows us to separate $\Laplace_{\Omega}$ into a radial and angular component on this set.

\begin{theorem}[\!\!{\cite[Chapter~X.7.4, Lemma~7.12]%
    {Helgason1962:Diff_Geo_Sym_Spaces}}]
  \label{thm:LaplaceBeltramiRadial2}
If $f \in C^{\infty}(\Omega)$ and $a\in\Omega$, then on $\Omega_a$ the Laplace
operator can be expressed in terms of geodesic polar coordinates by
\begin{equation*}
  \Laplace_{\Omega} f = -\frac1{A(r)}\frac{\partial}{\partial r}
  \left(A(r)\frac{\partial f}{\partial r}\right)+
  \Laplace_{\theta}f,
\end{equation*}
where $\Laplace_{\theta}$ is the Laplace operator on
$\mathbb{S}_\Omega(a,r)$, and $A(r)$ denotes the surface measure of
$\mathbb{S}_\Omega(a,r)$.
\end{theorem}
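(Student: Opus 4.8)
The plan is to reduce the statement to a separation-of-variables property of the Riemannian volume element in geodesic polar coordinates, the only nontrivial input being the isotropy afforded by two-point homogeneity. First I would record the form of the metric tensor in polar coordinates $(r,\theta)$ on $\Omega_a$, valid off the cut locus $\mathbb{S}_{\Omega}(a,\tfrac{\pi}{2\kappa})$ that has been removed. By the Gauss lemma the radial geodesics emanating from $a$ are unit speed and meet the geodesic spheres $\mathbb{S}_{\Omega}(a,r)$ orthogonally, so there are no mixed $dr\,d\theta$ terms and $g_{rr}\equiv 1$; hence the metric block-decomposes as $ds^2 = dr^2 + g_{ij}(r,\theta)\,d\theta^i d\theta^j$, where for each fixed $r$ the matrix $(g_{ij}(r,\cdot))$ is exactly the induced metric on $\mathbb{S}_{\Omega}(a,r)$. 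Consequently $\sqrt{|g|}=\sqrt{\det(g_{ij})}$, and the full inverse metric is block diagonal with angular block $(g^{ij})$.

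Next I would insert this into the intrinsic coordinate expression for the Laplace--Beltrami operator with the geometer's sign, $\Laplace_{\Omega} f = -\tfrac{1}{\sqrt{|g|}}\partial_\mu\!\big(\sqrt{|g|}\,g^{\mu\nu}\partial_\nu f\big)$. The block structure splits the sum into a radial term $-\tfrac{1}{\sqrt{|g|}}\partial_r\!\big(\sqrt{|g|}\,\partial_r f\big)$ and an angular term $-\tfrac{1}{\sqrt{|g|}}\partial_{\theta^i}\!\big(\sqrt{|g|}\,g^{ij}\partial_{\theta^j} f\big)$. Since $\sqrt{|g|}$ coincides with the volume density $\sqrt{\det(g_{ij}(r,\cdot))}$ of the sphere $\mathbb{S}_{\Omega}(a,r)$ and $(g^{ij})$ is its inverse metric, the angular term is, for each fixed $r$, precisely the Laplace--Beltrami operator $\Laplace_{\theta}$ of that geodesic sphere. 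It then remains only to reconcile the radial term with $-\tfrac{1}{A(r)}\partial_r\!\big(A(r)\partial_r f\big)$, i.e.\ to show that $\partial_r\log\sqrt{|g|}$ is independent of $\theta$.

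The key step, and where two-point homogeneity is essential, is the factorization $\sqrt{|g(r,\theta)|}=A(r)\,\Theta(\theta)$. Two-point homogeneity is equivalent to the isotropy group $K=G_a$ acting transitively on the unit sphere of directions $U_a\Omega\subset T_a\Omega$; since $K$ acts by isometries fixing $a$, it commutes with $\exp_a$ and therefore acts on the polar coordinates by fixing $r$ and rotating $\theta$. The Riemannian volume $\sqrt{|g|}\,dr\,d\theta$ is $K$-invariant, as is $dr$, so for each fixed $r$ the measure $\sqrt{|g(r,\theta)|}\,d\theta$ is a $K$-invariant measure on $U_a\Omega$; by transitivity such a measure is unique up to a positive scalar, hence equals $A(r)$ times the normalized invariant measure $\Theta(\theta)\,d\theta$, which is exactly the desired factorization, and integrating over $\theta$ confirms $A(r)=\mathrm{vol}(\mathbb{S}_{\Omega}(a,r))$. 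Taking logarithmic $r$-derivatives gives $\partial_r\log\sqrt{|g|}=\partial_r\log A(r)$, which rewrites the radial term as $-\tfrac{1}{A(r)}\partial_r\!\big(A(r)\partial_r f\big)$ and completes the proof. I expect the factorization to be the main obstacle: without the transitive isotropy action the volume density need not separate, so the uniqueness-of-invariant-measure argument, together with the orthogonal splitting from the Gauss lemma on the punctured manifold $\Omega_a$, is the delicate part of the argument.
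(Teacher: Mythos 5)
The paper does not actually prove this statement: it is imported directly from Helgason \cite[Chapter~X.7.4, Lemma~7.12]{Helgason1962:Diff_Geo_Sym_Spaces}, so there is no in-paper argument to compare against. Your proof is correct and is essentially the standard argument underlying the cited result: the Gauss lemma yields the block decomposition $ds^2 = dr^2 + g_{ij}(r,\theta)\,d\theta^i d\theta^j$, the angular part of $-\tfrac{1}{\sqrt{|g|}}\partial_\mu\bigl(\sqrt{|g|}\,g^{\mu\nu}\partial_\nu f\bigr)$ is automatically the Laplacian of the geodesic sphere because $\sqrt{|g|}=\sqrt{\det(g_{ij})}$, and the transitivity of the isotropy group $G_a$ on unit directions --- which follows from two-point homogeneity exactly as you argue, via injectivity of $\exp_a$ at small radii --- forces the factorization $\sqrt{|g(r,\theta)|}=A(r)\Theta(\theta)$, which is all the radial term needs. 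Two minor streamlinings: your disintegration of the volume form can be bypassed, since for fixed $r$ the measure $\sqrt{|g(r,\theta)|}\,d\theta$ is simply the Riemannian surface measure of $\mathbb{S}_\Omega(a,r)$, manifestly invariant under the isometries in $G_a$; and polar coordinates also degenerate at $r=0$, so the identity is literally established on $\Omega_a\setminus\{a\}$ and extends to $a$ by smoothness of $f$.
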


For each
$r \in [0, \frac{\pi}{2\kappa}]$, $\mathbb{S}_{\Omega}(a,r)$ is a submanifold
of $\Omega$ with Riemannian structure induced by that of $\Omega$. In this case 
we have for $0 < r < \frac{\pi}{2\kappa}$ (see \cite[Proposition~5.6 and
p.~171]%
{Helgason1965:Radon_Trans_Two_Pt_Homo})
\begin{equation}\label{eq:AreaOfSphere}
A(r) = c \kappa^{-2\alpha-1} \sin^{2\alpha+1}(\kappa r) \cos^{2\beta+1}(\kappa r),
\end{equation}
where $c$ is a constant depending on the structure of $\Omega$, the values
$\alpha$ and $\beta$ are given by \eqref{eq:JacobiAlphaBeta}. For the spaces
$\mathbb{FP}^{d-1}$ we choose $\kappa=1$, whereas for $\mathbb{S}^{d-1}$ we set
$\kappa=\frac12$.

For functions $f$ only depending on $r$ the Laplace operator then becomes
\begin{equation*}
  \Laplace_r = -\frac{1}{\sin^{2\alpha+1}(\kappa r) \cos^{2\beta+1}(\kappa r)}
  \frac{d}{dr}
  \left( \sin^{2\alpha+1}(\kappa r)\cos^{2\beta+1}(\kappa r)\frac{d}{dr} \right) ,
\end{equation*}
which, making the substitution $z = \cos( 2 \kappa r)$, becomes
\begin{equation*}
  \Laplace_z = -\frac{4\kappa^2}{(1-z)^\alpha (1+z)^\beta} \frac{d}{dz}
  \left( (1-z)^{\alpha+1} (1+z)^{\beta+1}  \frac{d}{dz}\right),
\end{equation*}
\cite[pp.~177-178]{Gangolli1967:Pos_Def_Ker_Homo_Sp}. This is the Jacobi operator,
for which the only eigenfunctions continuous on $[-1,1]$ are the Jacobi
polynomials $P^{(\alpha,\beta)}_k(z)$, with corresponding eigenvalues
$\lambda_k=4\kappa^2k(k+\alpha+\beta+1)$ (see
\cite[Theorem~4.2.2]{Szegoe1975:orthogonal_polynomials}).
Summing up this discussion we have shown
\begin{theorem}\label{thm:Jacobi}
  Let $\Omega$ be a two-point homogeneous space of diameter $\frac\pi{2\kappa}$
  and $a\in\Omega$. Then the eigenfunctions of $\Laplace_{\Omega}$ on $\Omega$ depending
  only on $\theta(a,x)$ are given by
  \begin{equation*}
    cP_k^{(\alpha,\beta)}(\cos(2\kappa\theta(x,a)))\quad\text{with
    }c\in\mathbb{R}
  \end{equation*}
  and the corresponding eigenvalues are
  \begin{equation*}
\lambda_k = 4\kappa^2k (k + \alpha + \beta + 1)
\end{equation*}
with the values of $\alpha$ and $\beta$ given by \eqref{eq:JacobiAlphaBeta}.
\end{theorem}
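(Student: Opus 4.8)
The plan is to reduce the problem to an ordinary differential equation by using the radial structure, and then to recognize that equation as the classical Jacobi differential equation. Concretely, suppose $f\in C^\infty(\Omega)$ is an eigenfunction of $\Laplace_\Omega$ that depends only on $r:=\theta(a,x)$, say $f=g(r)$ for some function $g$ on $[0,\frac{\pi}{2\kappa}]$. Applying Helgason's decomposition (Theorem~\ref{thm:LaplaceBeltramiRadial2}) on $\Omega_a$, the angular term $\Laplace_\theta f$ vanishes identically, since $f$ is constant on each geodesic sphere $\mathbb{S}_\Omega(a,r)$. Hence the eigenvalue equation $\Laplace_\Omega f=\lambda f$ collapses to the purely radial equation $\Laplace_r g=\lambda g$.

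Next I would substitute the explicit surface-area formula \eqref{eq:AreaOfSphere} into the radial operator; the constant $c\kappa^{-2\alpha-1}$ cancels, leaving
\[
\Laplace_r = -\frac{1}{\sin^{2\alpha+1}(\kappa r)\cos^{2\beta+1}(\kappa r)}\frac{d}{dr}\left(\sin^{2\alpha+1}(\kappa r)\cos^{2\beta+1}(\kappa r)\frac{d}{dr}\right).
\]
Performing the change of variables $z=\cos(2\kappa r)$, which maps $[0,\frac{\pi}{2\kappa}]$ bijectively onto $[-1,1]$, and using $\sin^2(\kappa r)=\frac{1-z}{2}$, $\cos^2(\kappa r)=\frac{1+z}{2}$, a routine computation transforms $\Laplace_r$ into the Jacobi operator
\[
\Laplace_z = -\frac{4\kappa^2}{(1-z)^\alpha(1+z)^\beta}\frac{d}{dz}\left((1-z)^{\alpha+1}(1+z)^{\beta+1}\frac{d}{dz}\right).
\]
The eigenvalue problem $\Laplace_z P=\lambda P$ is then exactly Jacobi's differential equation with parameters $(\alpha,\beta)$ as in \eqref{eq:JacobiAlphaBeta}, so by the classical theory its polynomial solutions are the $P_k^{(\alpha,\beta)}(z)$ with eigenvalues $4\kappa^2 k(k+\alpha+\beta+1)$ (see \cite[Theorem~4.2.2]{Szegoe1975:orthogonal_polynomials}).

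The step I expect to be the genuine content, rather than the formal manipulation, is justifying that \emph{only} the polynomial solutions are admissible, i.e. that a radial eigenfunction smooth on all of $\Omega$ corresponds precisely to a solution of the Jacobi equation regular at both endpoints $z=\pm 1$. The decomposition in Theorem~\ref{thm:LaplaceBeltramiRadial2} is valid only on $\Omega_a$, which excludes the cut locus $\mathbb{S}_\Omega(a,\frac{\pi}{2\kappa})$ (the endpoint $z=-1$) and, implicitly, the pole $a$ itself (the endpoint $z=+1$); yet $f$ is smooth across both. The second-order Jacobi equation has a two-dimensional solution space at each $\lambda$, but its second, linearly independent solution is singular at $z=\pm 1$ and therefore cannot arise from a function smooth at the pole and at the cut locus. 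Imposing continuity (in fact smoothness) on the closed interval $[-1,1]$ thus selects exactly the Jacobi polynomials and forces $\lambda$ into the discrete family $4\kappa^2 k(k+\alpha+\beta+1)$; establishing this regularity dichotomy carefully is the main obstacle.
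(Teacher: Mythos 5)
Your proposal is correct and follows essentially the same route as the paper: reduction to the radial operator via Theorem~\ref{thm:LaplaceBeltramiRadial2}, the substitution $z=\cos(2\kappa r)$ yielding the Jacobi operator, and the classical fact (Szeg\H{o}, Theorem~4.2.2) that the only eigenfunctions continuous on $[-1,1]$ are the Jacobi polynomials with eigenvalues $4\kappa^2 k(k+\alpha+\beta+1)$. The endpoint-regularity dichotomy you single out as the main content is precisely what the paper delegates to the Szeg\H{o} citation, so your argument matches the paper's proof in substance.
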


The eigenvalues and the dimensions of their corresponding eigen\-spaces are
given in Table~\ref{tab:eigen} (see
\cite{Bettiol_Lauret_Piccione2020:full_laplace_spectrum,
  Cahn_Wolf1976:zeta_functions_their,
  Grinberg1983:spherical_harmonics_integral,
  Shatalov2001:isometric_embeddings_cubature}).
\begin{table}[h]
  \renewcommand{\arraystretch}{1.8}
  \begin{tabular}{l| l| l|l|l}
    $\Omega$  & $\alpha$&$\beta$& $\lambda_k$&$m_k = \dim(V_k)$ \\[2mm]\hline
    $\mathbb{S}^{d-1}$ &$\frac{d-3}2$&$\frac{d-3}2$& $k (k+d-2)$ &
    $ \frac{2k+d-2}{d-2}\binom{k+d-3}{d-3}$\\[2mm]\hline
    $\mathbb{RP}^{d-1}$ & $\frac{d-3}2$&$-\frac12$&$2k (2k+d-2)$ &
    $ \frac{4k+d-2}{d-2}\binom{2k+d-3}{d-3}$\\[2mm]\hline
    $\mathbb{CP}^{d-1}$ & $d-2$&$0$&$4k (k+d-1)$ &
    $ \frac{2k+d-1}{d-1}\binom{d+k-2}{d-2}^2$\\[2mm]\hline
    $\mathbb{HP}^{d-1}$&$2d-3$&$1$ & $4k (k+2d-1)$ &
    $ \frac{2k+2d-1}{(2d-1)(2d-2)} \binom{k+2d-2}{2d-2} \binom{k+2d-3}{2d-3}$
    \\[2mm]\hline
    $\mathbb{OP}^{2}$ & $7$&$3$&$4k (k+11)$ &
    $ \frac{2k+11}{1320} \binom{k+10}{7} \binom{k+7}{7}$\\
  \end{tabular}
  \caption{\label{tab:eigen}
    The eigenvalues and dimensions of the eigenspaces of the
    Laplace operator for two-point homogeneous spaces}
\end{table}

For all two-point homogeneous spaces, we have the general formula
\begin{equation*}
  m_k =\frac{2k+\alpha+\beta+1}{\alpha+\beta+1}
  \frac{(\alpha+\beta+1)_k(\alpha+1)_k}{k!(\beta+1)_k}.
\end{equation*}

\subsection{Irreducibility of representations}\label{sec:irred-repr}

Let $\Omega$ be a two-point homogeneous space, with isometry
group $G$ and $G_a$ the stabilizer of some $a \in \Omega$, meaning that
$\Omega \simeq G / G_a$. We then have an orthogonal representation of $G$ in
the pre-Hilbert space $C^{\infty}(\Omega)$ given by $g\mapsto f(gx)$. Let $V$
be a finite-dimensional subspace of $C^{\infty}(\Omega)$, invariant under this
representation, and let $q_V$ be the representation induced in $V$.

\begin{definition}\label{def:ZonalGHM}
  A function $f$ is called \emph{zonal} with respect to $a\in\Omega$
  if $f(a) \neq 0$, and for all $g \in G_a$
  and $x \in \Omega$, $f(gx) = f(x)$. The zonal functions of $V$ form a vector
  subspace of $V$, denoted by $Z_a(V)$.
\end{definition}

\begin{lemma}\label{lem:GHMIrreducibleZonal}
  If $V \neq \{0\}$, then $Z_a(V) \neq \{0\}$. If $\dim(Z_a(V)) = 1$ for one
  (and thus all) $a\in\Omega$, $q_V$ is
  irreducible.
\end{lemma}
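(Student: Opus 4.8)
The plan is to prove the two assertions in turn, using only that $G$ acts transitively on $\Omega$ and that the stabilizer $G_a$ is compact (being a closed subgroup of the compact isometry group $G$). Throughout I read $Z_a(V)$ as the subspace of $G_a$-invariant functions in $V$, and the key tool will be averaging over $G_a$, which preserves $V$.

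For the first claim I would first show that evaluation at $a$ does not vanish identically on $V$. If $f(a)=0$ for every $f\in V$, then for each $g\in G$ the function $x\mapsto f(gx)$ also lies in $V$ by $G$-invariance and in particular vanishes at $a$, giving $f(ga)=0$; transitivity of $G$ then forces $f\equiv 0$ on $\Omega$, contradicting $V\neq\{0\}$. Hence there is some $f\in V$ with $f(a)\neq 0$. I would then project this $f$ onto the $G_a$-invariants by averaging,
\[
  (Pf)(x)=\int_{G_a} f(gx)\,dg,
\]
against the normalized Haar measure of the compact group $G_a$. Since $V$ is finite-dimensional and $G$-invariant, each $x\mapsto f(gx)$ lies in $V$ and the integral, being a limit of convex combinations of elements of the closed subspace $V$, again lies in $V$. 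By right-invariance of Haar measure $Pf$ is $G_a$-invariant, hence zonal, and because every $g\in G_a$ fixes $a$ we get $(Pf)(a)=\int_{G_a}f(a)\,dg=f(a)\neq 0$. Thus $Pf$ is a nonzero element of $Z_a(V)$.

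For the second claim I would argue by contraposition using complete reducibility. Since $G$ preserves $\sigma$, the action $f\mapsto f\circ g$ is orthogonal for the $L^2(\Omega,\sigma)$ inner product, so if $q_V$ were not irreducible we could split $V=W_1\oplus W_2$ as an orthogonal direct sum of two nonzero $G$-invariant subspaces. Applying the first part to each $W_i$ produces nonzero zonal functions $h_i\in Z_a(W_i)$. As $Z_a(W_i)\subseteq Z_a(V)$ and $W_1\cap W_2=\{0\}$, the functions $h_1,h_2$ are linearly independent elements of $Z_a(V)$, so $\dim Z_a(V)\geq 2$, contradicting the hypothesis. Hence $q_V$ is irreducible. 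The parenthetical ``one (and thus all) $a$'' is justified because the stabilizers satisfy $G_{ga}=gG_ag^{-1}$, so $\dim Z_a(V)=\dim V^{G_a}$ is independent of $a$.

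The individual steps are routine once the averaging operator is in hand; the one point requiring genuine care—and the natural candidate for the main obstacle—is the averaging step of the first part, namely verifying that $Pf$ really lands in the finite-dimensional space $V$ while remaining nonzero at $a$. This is precisely where compactness of $G_a$ (existence of a normalized Haar measure) and the fact that $G_a$ fixes $a$ are used, and everything else follows formally.
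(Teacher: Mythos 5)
Your proof is correct and takes essentially the same route as the paper's: you show the evaluation functional $f\mapsto f(a)$ is not identically zero on $V$ via transitivity, average over the compact stabilizer $G_a$ against its normalized Haar measure to produce a $G_a$-invariant function with value $f(a)\neq 0$, and for the second claim use orthogonality of the representation to split a reducible $V$ into two nonzero invariant summands, each contributing a zonal function, forcing $\dim Z_a(V)\geq 2$. The only cosmetic difference is that the paper first passes to $(\ker\phi)^{\perp}$ for the evaluation map $\phi(f)=f(a)$ before averaging, whereas you average a suitable $f$ directly; this changes nothing of substance.
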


\begin{proof}
Consider the linear map $\phi: V \rightarrow \mathbb{R}$ defined by
\begin{equation*}
\phi(f) = f(a).
\end{equation*}

Since, by assumption, $V$ contains a non-zero function, $G$ is transitive on
$\Omega$, and $V$ is $G$-invariant there exists some $f \in V$ such that
$\phi(f)$ is not zero. Thus $\ker (\phi)$ is a subspace of $V$, which is
invariant under $G_a$. The orthogonal subspace $(\ker (\phi))^{\perp}$ is also
invariant under $G_a$. Now, let $f \in (\ker (\phi))^{\perp}$ and $\mu$ be the
Haar measure on $G_a$. Define $f^*(x) = \int_{G_a} f( h x) \,d \mu(h)$
for all $x \in \Omega$. Then $f^*$ is in $V$, nonzero at $a$, and is
$G_a$-invariant, meaning it is a zonal function,
proving our first statement.

Suppose that $q_V$ is reducible. Then, since it is an orthogonal
representation, we have a decomposition
\begin{equation*}
  V = V' \oplus V''
\end{equation*} where $V'$ and $V''$ are
both $G$-invariant and nontrivial. Then each of the two spaces contains a
nontrivial subspace of zonal functions, so $\dim(Z_a(V)) \geq 2$.
\end{proof}

\begin{proposition}\label{prop:GHMIrreducibleSubspaces}
  For each $k \in \mathbb{N}_0$, let $V_k$ be the space of eigenfunctions of
  $\Laplace_{\Omega}$ associated to the $k$-th eigenvalue $\lambda_k$, and let $q_{k}$
  be the orthogonal representation of $G$ in $V_k$. The zonal functions of
  $V_k$ are exactly $c P^{(\alpha, \beta)}_{k}(\cos(2 \kappa\vartheta(x,a)))$
  for $c \in \mathbb{R}$, and so $q_{k}$ is irreducible.
\end{proposition}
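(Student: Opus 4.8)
The plan is to combine the two preceding results, Theorem \ref{thm:Jacobi} and Lemma \ref{lem:GHMIrreducibleZonal}, in a clean two-step argument. The content we must establish is that the space $Z_a(V_k)$ of zonal functions in the $k$-th eigenspace is exactly one-dimensional, spanned by $P_k^{(\alpha,\beta)}(\cos(2\kappa\vartheta(x,a)))$; once this is known, irreducibility of $q_k$ follows immediately from the second statement of Lemma \ref{lem:GHMIrreducibleZonal}.

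First I would identify the zonal functions with radial functions. By Definition \ref{def:ZonalGHM}, a zonal function $f\in V_k$ satisfies $f(gx)=f(x)$ for all $g\in G_a$. Since $\Omega$ is two-point homogeneous, the stabilizer $G_a$ acts transitively on each geodesic sphere $\mathbb{S}_\Omega(a,r)$: indeed, any two points at equal distance $r$ from $a$ can be matched by an isometry fixing $a$, because the pair $(a,x_1)$ and $(a,x_2)$ have the same mutual distance and two-point homogeneity supplies an isometry sending the first pair to the second, which necessarily fixes $a$. Hence a $G_a$-invariant function is constant on geodesic spheres about $a$, i.e.\ it depends only on $\vartheta(a,x)$. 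Conversely any such radial function is manifestly $G_a$-invariant. Therefore $Z_a(V_k)$ consists precisely of the eigenfunctions of $\Laplace_\Omega$ with eigenvalue $\lambda_k$ that depend only on $\vartheta(a,x)$.

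Next I would apply Theorem \ref{thm:Jacobi}, which asserts that the eigenfunctions of $\Laplace_\Omega$ depending only on $\vartheta(a,x)$ are exactly the scalar multiples $c\,P_k^{(\alpha,\beta)}(\cos(2\kappa\vartheta(x,a)))$, with eigenvalue $\lambda_k=4\kappa^2 k(k+\alpha+\beta+1)$. Because the map $k\mapsto \lambda_k$ is strictly increasing, the eigenvalue $\lambda_k$ singles out the index $k$, so the radial eigenfunctions for this particular eigenvalue are precisely the multiples of $P_k^{(\alpha,\beta)}(\cos(2\kappa\vartheta(x,a)))$. Combined with the previous paragraph, this shows $Z_a(V_k)=\mathbb{R}\cdot P_k^{(\alpha,\beta)}(\cos(2\kappa\vartheta(x,a)))$, which is one-dimensional. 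Applying the irreducibility criterion of Lemma \ref{lem:GHMIrreducibleZonal} then yields that $q_k$ is irreducible, completing the proof.

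The main obstacle, and the only step requiring genuine care, is the claim that $G_a$ acts transitively on geodesic spheres, since this is what converts the representation-theoretic notion of zonality into the concrete notion of radiality and thereby lets Theorem \ref{thm:Jacobi} apply. This transitivity is exactly the force of the two-point homogeneity hypothesis, so the argument hinges on invoking that definition correctly rather than on any hard computation; everything else is bookkeeping built on results already established in the excerpt.
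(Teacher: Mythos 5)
Your proposal is correct and follows essentially the same route as the paper: the paper's proof also reduces to showing $\dim(Z_a(V_k))\leq 1$ by noting that two-point homogeneity together with Theorem~\ref{thm:Jacobi} forces any zonal $f\in V_k$ to equal $cP_k^{(\alpha,\beta)}(\cos(2\kappa\vartheta(x,a)))$, and then invokes Lemma~\ref{lem:GHMIrreducibleZonal}. The only difference is that you spell out the step the paper leaves implicit, namely that $G_a$ acts transitively on geodesic spheres about $a$, so zonal functions are exactly the radial ones --- a worthwhile elaboration, but not a different argument.
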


\begin{proof}
  By Lemma \ref{lem:GHMIrreducibleZonal}, $Z_a(V_k) \neq \{ 0\}$, and it
  suffices to show that $\dim(Z_a(V_k)) \leq 1$. Let $f$ be a zonal
  function. Then two-point homogeneity and Theorem~\ref{thm:Jacobi} tell us that
  $f(x)=cP_k^{(\alpha,\beta)}(\cos(2\kappa\theta(x,a)))$ for some
  $c\in\mathbb{R}$. Thus the space $Z_a(V_k)$ is one-dimensional, and the
  proposition is proved.
\end{proof}

As a consequence of Theorem~\ref{thm:Eigen Decomposition L2} and
Proposition~\ref{prop:GHMIrreducibleSubspaces} we have
\begin{proposition}\label{prop:finite-invariant}
  Let $H$ be a finite-dimensional $G$-invariant subspace of
  $L^2(\Omega,\sigma)$. Then there exist $0\leq k_1<\cdots<k_m$ such that
  \begin{equation*}
    H=V_{k_1}\oplus\cdots\oplus V_{k_m},
  \end{equation*}
  where $V_k$ is the eigenspace of $\Laplace_{\Omega}$ corresponding to the eigenvalue
  $\lambda_k$.
\end{proposition}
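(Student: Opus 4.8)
The plan is to decompose $H$ into irreducible $G$-representations and to match each constituent to one of the eigenspaces $V_k$ using Schur's lemma together with the orthogonal projections onto the $V_k$. First I would record the two structural facts that make this possible: $G$, being the isometry group of the compact manifold $\Omega$, is a compact group, and it acts unitarily on $L^2(\Omega,\sigma)$ because $\sigma$ is $G$-invariant. Complete reducibility for representations of compact groups then splits the finite-dimensional invariant space $H$ as an orthogonal direct sum $H=W_1\oplus\cdots\oplus W_r$ of irreducible $G$-invariant subspaces. It therefore suffices to prove that \emph{every} irreducible $G$-invariant subspace $W\subseteq L^2(\Omega,\sigma)$ coincides with some eigenspace $V_k$; the decomposition of $H$ will then be a sum of distinct $V_k$, which we reindex in increasing order.

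To carry this out, fix such a $W$ and let $P_k\colon L^2(\Omega,\sigma)\to V_k$ be the orthogonal projection. Since $V_k$ is $G$-invariant and the action is unitary, $P_k$ commutes with $G$, so each restriction $P_k|_W\colon W\to V_k$ is $G$-equivariant. By Proposition~\ref{prop:GHMIrreducibleSubspaces} every $V_k$ is irreducible, and $W$ is irreducible by assumption, so Schur's lemma forces $P_k|_W$ to be either $0$ or an isomorphism onto $V_k$. As $W\neq\{0\}$ and $L^2(\Omega,\sigma)=\overline{\bigoplus_k V_k}$ by Theorem~\ref{thm:Eigen Decomposition L2}, at least one $P_k|_W$ is nonzero, which already yields an equivalence $W\cong V_k$ of $G$-representations for some $k$.

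The hard part will be to show that this index $k$ is \emph{unique}, i.e. that the eigenspaces are pairwise inequivalent as $G$-representations, so that the decomposition $L^2(\Omega,\sigma)=\overline{\bigoplus_k V_k}$ is multiplicity-free. This is essential rather than cosmetic: if one had $V_k\cong V_{k'}$ for some $k\neq k'$, then a diagonal irreducible subspace of $V_k\oplus V_{k'}$ would be $G$-invariant yet equal to neither eigenspace, and the proposition would fail. I would deduce multiplicity-freeness from the Gelfand pair property of $(G,G_a)$: two-point homogeneity means that $G_a$ acts transitively on each geodesic sphere $\mathbb{S}_{\Omega}(a,r)$, so $G_a$-invariant functions depend only on $\vartheta(\cdot,a)$ and the convolution algebra of $G_a$-biinvariant functions on $G$ is commutative; consequently each irreducible representation occurs in $L^2(\Omega,\sigma)$ with multiplicity at most one. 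This is classical for two-point homogeneous spaces (see \cite{Wolf2007:harmonic_analysis_commutative}) and is exactly what is reflected in the one-dimensionality of the zonal subspaces $Z_a(V_k)$ recorded in Proposition~\ref{prop:GHMIrreducibleSubspaces}. Granting it, if $P_k|_W$ and $P_{k'}|_W$ were both nonzero with $k\neq k'$ we would get $V_k\cong W\cong V_{k'}$, a contradiction; hence exactly one $P_k|_W$ is nonzero. For that $k$, every $w\in W$ satisfies $w=\sum_j P_j w=P_k w\in V_k$, so $W\subseteq V_k$, and since $P_k|_W$ is an isomorphism the two spaces have the same finite dimension and must coincide. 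Applying this to each summand $W_i$ and collecting the resulting indices, which are distinct because the $W_i$ are mutually orthogonal nonzero spaces, gives $H=V_{k_1}\oplus\cdots\oplus V_{k_m}$ with $k_1<\cdots<k_m$.
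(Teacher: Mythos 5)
Your proof is correct, and it follows the route the paper itself intends: the paper offers no written proof at all, stating the proposition merely ``as a consequence of'' Theorem~\ref{thm:Eigen Decomposition L2} and Proposition~\ref{prop:GHMIrreducibleSubspaces}, and your argument is the natural filling-in of exactly that derivation (complete reducibility for the compact group $G$, Schur's lemma applied to the equivariant projections $P_k$, and the spectral decomposition of $L^2(\Omega,\sigma)$). The one substantive point you add beyond what the paper makes explicit is the multiplicity-freeness of the decomposition $L^2(\Omega,\sigma)=\overline{\bigoplus_k V_k}$, and you are right both that it is genuinely needed (your diagonal-subspace counterexample shows irreducibility of the $V_k$ alone does not suffice) and that it holds here; citing the Gelfand pair property of $(G,G_a)$, with the reference \cite{Wolf2007:harmonic_analysis_commutative} the paper already uses, is a legitimate way to discharge it. If you wanted to stay entirely within the paper's toolkit, note that pairwise inequivalence of the $V_k$ also follows directly from Proposition~\ref{prop:GHMIrreducibleSubspaces}: a $G$-equivariant isomorphism $T\colon V_k\to V_{k'}$ could be taken unitary (polar decomposition, with $|T|$ equivariant), and it would carry the unit zonal vector of $V_k$ to $\pm$ the unit zonal vector of $V_{k'}$; comparing the resulting matrix coefficients forces $P_k^{(\alpha,\beta)}(\cos(2\kappa\theta))/P_k^{(\alpha,\beta)}(1)=P_{k'}^{(\alpha,\beta)}(\cos(2\kappa\theta))/P_{k'}^{(\alpha,\beta)}(1)$ for all $\theta$, which is impossible for $k\neq k'$ since Jacobi polynomials of distinct degrees are linearly independent. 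All remaining steps (the convergent expansion $w=\sum_j P_jw=P_kw$, the dimension count giving $W=V_k$, and the distinctness of the indices arising from mutually orthogonal summands) are sound.
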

\subsection{The addition formula for eigenfunctions of the  Laplace operator}\label{sec:addit-theor-eigenf}

For each $k \in \mathbb{N}_0$, $Y_{k,1}, \ldots, Y_{k, m_k}$
be real-valued functions and form an orthonormal basis of $V_k$, under the inner product
$\langle Y, Z \rangle = \int_{\Omega} Y(x) Z(x)\, d\sigma(x)$.

\begin{theorem}[Addition Formula]\label{thm:Addition Formula}
  For each $k \in \mathbb{N}_0$ and $x,y \in \Omega$,
  \begin{equation}
  \sum_{m=1}^{m_k} Y_{k, m}(x) Y_{k,m}(y) = \frac{m_k}{P^{(\alpha, \beta)}_{k}(1)}
  P^{(\alpha, \beta)}_{k}(\cos(2\kappa \vartheta(x,y))).\label{eq:addition}
\end{equation}

\end{theorem}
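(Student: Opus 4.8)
The plan is to establish the addition formula by exploiting the irreducibility of the representation $q_k$ of $G$ on $V_k$ (Proposition~\ref{prop:GHMIrreducibleSubspaces}) together with the characterization of zonal functions. The left-hand side defines a kernel
\[
  F_k(x,y) = \sum_{m=1}^{m_k} Y_{k,m}(x)\,Y_{k,m}(y),
\]
which is the reproducing kernel of $V_k$, and the strategy is to show it must be a scalar multiple of the zonal function $P_k^{(\alpha,\beta)}(\cos(2\kappa\vartheta(x,y)))$, then pin down the constant by taking a trace.

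First I would verify that $F_k$ is independent of the choice of orthonormal basis: if $\{Y_{k,m}'\}$ is another orthonormal basis related by an orthogonal matrix, the sum $\sum_m Y_{k,m}(x)Y_{k,m}(y)$ is unchanged, since it is the trace of the orthogonal projection onto $V_k$ evaluated at the point pair $(x,y)$. Next I would show $F_k$ is $G$-invariant in the sense that $F_k(gx,gy)=F_k(x,y)$ for all $g\in G$. This follows because the representation $q_k$ is orthogonal: for fixed $g$, the functions $x\mapsto Y_{k,m}(gx)$ form another orthonormal basis of $V_k$ (as $q_k$ acts by an orthogonal transformation on the coefficients), and by the basis-independence just established, the sum is unchanged. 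Since $G$ acts two-point homogeneously, any $G$-invariant function of two points depends only on the geodesic distance $\vartheta(x,y)$, so $F_k(x,y)=\Phi_k(\vartheta(x,y))$ for some function $\Phi_k$.

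Now I would fix $y=a$ and regard $x\mapsto F_k(x,a)$ as an element of $V_k$ (it is a finite linear combination of the $Y_{k,m}$). By the invariance just shown, this function is zonal with respect to $a$: it is invariant under the stabilizer $G_a$ and depends only on $\vartheta(x,a)$. By Proposition~\ref{prop:GHMIrreducibleSubspaces}, the space of zonal functions in $V_k$ is one-dimensional, spanned by $P_k^{(\alpha,\beta)}(\cos(2\kappa\vartheta(x,a)))$. Hence $F_k(x,a)=c_k\,P_k^{(\alpha,\beta)}(\cos(2\kappa\vartheta(x,a)))$ for some constant $c_k$, and by $G$-invariance this extends to $F_k(x,y)=c_k\,P_k^{(\alpha,\beta)}(\cos(2\kappa\vartheta(x,y)))$ for all pairs.

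It remains to determine $c_k$. Setting $x=y$ gives $F_k(x,x)=\sum_m Y_{k,m}(x)^2$, and the function $P_k^{(\alpha,\beta)}(\cos 0)=P_k^{(\alpha,\beta)}(1)=\binom{k+\alpha}{k}$ on the right. Integrating both sides over $\Omega$ against $d\sigma$ and using orthonormality, $\int_\Omega F_k(x,x)\,d\sigma(x)=\sum_m \|Y_{k,m}\|^2 = m_k$, while the right-hand side integrates to $c_k\,P_k^{(\alpha,\beta)}(1)$ since $F_k(x,x)$ is the constant $c_k P_k^{(\alpha,\beta)}(1)$ (the distance $\vartheta(x,x)=0$ is independent of $x$). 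This yields $c_k = m_k/P_k^{(\alpha,\beta)}(1)$, which is exactly \eqref{eq:addition}. The main obstacle I anticipate is the careful justification that fixing one argument produces a genuine zonal function in $V_k$ in the precise sense of Definition~\ref{def:ZonalGHM}; everything else is a standard Schur-type argument. I expect this step to be routine once the $G$-invariance of $F_k$ is in hand.
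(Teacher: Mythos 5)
Your proof is correct and follows essentially the same route as the paper's: both establish $G$-invariance of the kernel $\sum_m Y_{k,m}(x)Y_{k,m}(y)$ via the orthogonality of the representation $q_k$, invoke Proposition~\ref{prop:GHMIrreducibleSubspaces} to identify the resulting zonal function with a multiple of $P_k^{(\alpha,\beta)}(\cos(2\kappa\vartheta(x,y)))$, and fix the constant by setting $x=y$ and integrating. The one subtlety you flag (that $F_k(\cdot,a)$ is genuinely zonal, i.e.\ nonvanishing at $a$) is settled by your own trace computation, since $F_k(a,a)=m_k>0$ by homogeneity.
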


\begin{proof}
Let
\begin{equation*}
Y_k(x) := \begin{pmatrix}
Y_{k,1}(x) \\
\vdots \\
Y_{k,m_k}(x)
\end{pmatrix},
\end{equation*}
and note that $\sum_{m=1}^{m_k} Y_{k, m}(x) Y_{k,m}(y) = Y_k(x)^\trans Y_k(y)$. For
any $\gamma \in G$, $Y_{k,1}(\gamma x),\ldots, Y_{k,m_k}(\gamma x)$ is also an
orthonormal basis of $V_k$, thus there is a
$q_{k}(\gamma) \in \mathrm{O}(V_k)$, so that
\begin{align*}
  Y_k(\gamma x)^\trans Y_k( \gamma y) &=
  ( q_{k}(\gamma) Y_k(x))^\trans q_{k}(\gamma) Y_k(y)\\ 
  &=
  Y_k(x)^\trans q_{k}(\gamma)^\trans q_{k}(\gamma) Y_k(y) = Y_k(x)^\trans Y_k(y).
\end{align*}
In particular, this means that for all $\gamma \in G_y$, 
\begin{equation*}
Y_k(\gamma x)^\trans Y_k(y) = Y_k(\gamma x)^\trans Y_k(\gamma y) = Y_k(x)^\trans Y_k(y)
\end{equation*}
making $Y_k(x)^\trans Y_k(y)$ a zonal function of $x$, thus it must be a multiple
of $P^{(\alpha, \beta)}_{k}(\cos(2 \vartheta(x,y)))$: 
\begin{equation*}
  Y_k(x)^\trans Y_k(y)=c_kP_k^{(\alpha, \beta)}(\cos(2\kappa\theta(x,y))).
\end{equation*}
Setting $x=y$ and integrating yields
\begin{equation*}
  \sum_{m=1}^{m_k}\int_\Omega Y_{k,m}(x)^2\,d\sigma(x)=m_k=
  c_kP_k^{(\alpha,\beta)}(1),
\end{equation*}
which gives \eqref{eq:addition}.
\end{proof}

We note that one can achieve such an addition formula without making use of
properties of the Laplace operator and its eigenfunctions, and rather
making use of representation theory (see
\cite[Chapter~9.2.1]{Conway_Sloane1999:sphere_packings_lattices},
\cite[Section~3.2]{Levenshtein1998:Universal_Bounds_Codes}).

\begin{remark}
  From now on we only consider the projective spaces $\mathbb{RP}^{d-1}$,
  $\mathbb{CP}^{d-1}$,  $\mathbb{HP}^{d-1}$, and $\mathbb{OP}^2$; the case of
  the sphere $\mathbb{S}^{d-1}$ has been treated in a very similar manner in
  \cite{Beltran_Marzo_Ortega-Cerda2016:energy_discrepancy_rotationally}. Thus
  we set $\kappa=1$ for the remaining part of the paper.
\end{remark}

As a corollary of the area formula \eqref{eq:AreaOfSphere}, we can see that the
weighted measure $\nu^{(\alpha, \beta)}$ given by \eqref{eq:nualpha} is related
to integration on $\Omega$ in the following way: for any $a \in \Omega$, and
$F(\cos(2\theta))\in L^1([0,\frac\pi2],\nu^{\alpha,\beta})$,
\begin{equation}\label{eq:JacobiProjection}
    \int\limits_{\Omega} F( \cos(2\vartheta( x, a)))\, d \sigma(x)
    =        \int\limits_0^{\frac\pi2}F(\cos(2\theta))
    \,d\nu^{\alpha,\beta}(\theta).
\end{equation}

\subsection{The Green function}\label{sec:green-function}
The \emph{Green function} $G$ of the Laplace-Bel\-trami operator is given by the
integral operator solving the equation
\begin{equation*}
  \Laplace_{\Omega} g=f
\end{equation*}
by
\begin{equation*}
  g(x)=\int_\Omega G(x,y)f(y)\,d\sigma(y)
\end{equation*}
with the additional condition $\int_\Omega g(x)\,d\sigma(x)=0$. In the case of
two-point homogeneous spaces the bivariate function $G(x,y)$ actually only
depends on the distance between $x$ and $y$ (see
\cite{Beltran_Corral_CriadodelRey2019:GreenEnergy}). In fact, as we will show
below, the Green functions on the projective spaces have the following closed
form expressions.
\begin{proposition}\label{prop:GreenKernelFormula}
The Green function for a projective space $\Omega$ is given by
\begin{equation*}
\begin{split}
 G(x,y) = &\frac1{4(\alpha+\beta+1)} \Bigg( \sum_{\ell=1}^\alpha
  \frac{\binom \alpha\ell(\ell-1)!}{(\beta+\alpha+1-\ell)_\ell}
 \frac1{\rho(x,y)^{2\ell}} - 2\log(\rho(x,y)) \\
  &-  \psi(\alpha+\beta+1) - \psi(\alpha+\beta+2)-   \gamma  +\psi(\beta+1) \Bigg)
\end{split}
\end{equation*}
for $\alpha \in \mathbb{N}_0$ and
\begin{equation*}
  \begin{split}
 G(x,y) =  &\frac{\cos(\theta(x,y))}{2\alpha + 1}\left(\frac\pi2-\theta(x,y) \right)
    \sum_{\ell=0}^{\alpha - \frac{1}{2}} \frac{(\frac12)_\ell}{\ell!} \frac1{\rho(x,y)^{2\ell+1}}\\
& +  \frac1{4\alpha + 2}\sum_{\ell=1}^{\alpha - \frac{1}{2}} \frac1\ell\left(\frac{(\alpha + 1-\ell)_\ell}
      {(\alpha + \frac{1}{2}-\ell)_\ell}
      -\frac{(\frac12)_\ell}{\ell!}\right)\frac1{\rho(x,y)^{2\ell}}\\
    & -\frac1{2\alpha+1}H_{\alpha-\frac{1}{2}}-\frac1{(2\alpha+1)^2}
  \end{split}
\end{equation*}
for $\alpha\in\frac12+\mathbb{N}_0$.
\end{proposition}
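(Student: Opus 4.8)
The plan is to determine $G$ by solving an ordinary differential equation rather than by summing its spectral series directly. Since $G(x,y)$ depends only on $r=\vartheta(x,y)$ and its spectral expansion against the addition formula (Theorem~\ref{thm:Addition Formula}) reads $G(x,y)=\sum_{k\geq1}\lambda_k^{-1}\frac{m_k}{P_k^{(\alpha,\beta)}(1)}P_k^{(\alpha,\beta)}(\cos 2r)$, applying $\Laplace_{\Omega}$ termwise and invoking completeness gives $\Laplace_{\Omega,x}G(x,y)=\delta_y(x)-1$. Hence on $0<r<\frac\pi2$ the radial profile satisfies $\Laplace_r G=-1$, which by the Jacobi form of the radial operator from Section~\ref{sec:laplace-operator-eigen} (with $\kappa=1$, $z=\cos 2r$) becomes
\begin{equation*}
\frac{d}{dz}\Big((1-z)^{\alpha+1}(1+z)^{\beta+1}\frac{dG}{dz}\Big)=\tfrac14(1-z)^\alpha(1+z)^\beta .
\end{equation*}
I would pin down the two constants of integration by two conditions: smoothness at the cut locus $r=\frac\pi2$ (i.e.\ $z=-1$), reflecting that $G(\cdot,y)$ is regular away from $y$, and the normalization $\int_0^{\pi/2}G\,d\nu^{(\alpha,\beta)}=0$ coming from $\int_\Omega G\,d\sigma=0$.

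First I would integrate once. Regularity at $z=-1$ forces the first constant to vanish, because $(1+z)^{\beta+1}$ degenerates there and $\beta+1>0$ in every case; this gives $(1-z)^{\alpha+1}(1+z)^{\beta+1}G'(z)=\tfrac14\int_{-1}^{z}(1-s)^\alpha(1+s)^\beta\,ds$. Passing to the chordal variable through $1-z=2\rho^2$, $1+z=2(1-\rho^2)$, $\rho=\sin r$, this simplifies to
\begin{equation*}
\frac{dG}{d\rho}=-\frac{1}{\rho^{2\alpha+1}(1-\rho^2)^{\beta+1}}\int_\rho^1\tau^{2\alpha+1}(1-\tau^2)^\beta\,d\tau .
\end{equation*}
As a consistency check the inner integral tends to $\tfrac12B(\alpha+1,\beta+1)=\gamma_{\alpha,\beta}$ as $\rho\to0$, which is precisely the coefficient making the induced point mass at $x=y$ equal to $1$, confirming the $\delta_y$ term and thus the normalization of the singularity.

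It remains to integrate once more, and here the two cases separate. When $\alpha\in\mathbb{N}_0$ (the spaces $\mathbb{CP}^{d-1}$, $\mathbb{HP}^{d-1}$, $\mathbb{OP}^2$, and odd-dimensional $\mathbb{RP}^{d-1}$), the substitution $u=\tau^2$ turns $\tau^{2\alpha+1}\,d\tau$ into a polynomial times $du$, so $I(\rho)=\tfrac12\int_{\rho^2}^1 u^\alpha(1-u)^\beta\,du$ is a finite combination of powers $(1-\rho^2)^{j+\beta+1}$; dividing by $\rho^{2\alpha+1}(1-\rho^2)^{\beta+1}$ and expanding a second binomial produces a Laurent expression in $\rho$ containing a single $\rho^{-1}$ term. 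Integrating termwise then yields exactly the inverse powers $\rho^{-2\ell}$ for $1\le\ell\le\alpha$, with the unique $\rho^{-1}$ term producing the logarithm $-2\log\rho$; collapsing the resulting double sum into the stated coefficient $\binom{\alpha}{\ell}(\ell-1)!/(\beta+\alpha+1-\ell)_\ell$ is the combinatorial heart of this case. When $\alpha\in\frac12+\mathbb{N}_0$ we are necessarily in $\mathbb{RP}^{d-1}$ with $\beta=-\tfrac12$; there the substitution $\tau=\sin\phi$ converts the inner integral into $\int_{\vartheta}^{\pi/2}\sin^{2\alpha+1}\phi\,d\phi$ with $\vartheta=\arcsin\rho$, an even power of the sine, and its reduction formula generates both the linear factor $(\frac\pi2-\vartheta)$ (from the leading $\phi$-term) and the accompanying $\cos\vartheta$-weighted sums $\sum\frac{(1/2)_\ell}{\ell!}\rho^{-2\ell-1}$ after dividing and integrating.

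Finally the additive constant is fixed by the normalization: it equals minus the $\nu^{(\alpha,\beta)}$-average of the singular part just obtained. The inverse-power terms integrate against $\sin^{2\alpha+1}\theta\cos^{2\beta+1}\theta$ to Beta values $\tfrac12B(\alpha+1-\ell,\beta+1)$, while the $\log\rho=\log\sin\vartheta$ term integrates to a parameter derivative of the Beta function, which is exactly what manufactures the digamma combination $-\psi(\alpha+\beta+1)-\psi(\alpha+\beta+2)-\gamma+\psi(\beta+1)$ (respectively the harmonic number $H_{\alpha-1/2}$ in the half-integer case). I expect this last step, together with the precise matching of the iterated-integral coefficients to the Pochhammer symbols, to be the main obstacle: the differential-equation setup and the two integrations are routine, but collapsing the finite sums and evaluating the logarithmic Beta-derivative into the closed forms exactly as written is where the delicate constant-tracking resides.
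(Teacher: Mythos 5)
Your proposal is correct in substance but follows a genuinely different route from the paper. The paper never solves a differential equation: it Abel-sums the spectral series for $G$ (which converges absolutely only for $\alpha<\frac12$ and diverges outright for $\alpha\geq\frac32$) using the Jacobi generating function \cite[eq.~(6.4.7)]{Andrews_Askey_Roy1999:special_functions}, rewrites the Abel limit as an integral in the summation variable $z$, extracts Taylor coefficients in $\cos^2\theta$ via Beta-type integrals to reach the intermediate identity $\mathcal{G}_{\alpha,\beta}(\cos 2\theta)=-\frac1{4(\alpha+\beta+1)}\bigl(\gamma+\psi(\alpha+\beta+2)+\sum_{n\geq1}\frac{(\alpha+\beta+1)_n}{n(\beta+1)_n}\cos^{2n}\theta\bigr)$, valid for all $\alpha,\beta>-1$, and only then resums this elementary series (a binomial expansion of $(n+\beta+1)_k$ for $\alpha\in\mathbb{N}_0$; the antiderivative pair $F_k'=G_k$ with the $\arcsin$ closed form for $\alpha\in\frac12+\mathbb{N}_0$). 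Your route --- integrating $\frac{d}{dz}\bigl((1-z)^{\alpha+1}(1+z)^{\beta+1}G'\bigr)=\frac14(1-z)^\alpha(1+z)^\beta$ twice and fixing constants by cut-locus regularity and zero mean --- is essentially the alternative derivation the paper itself points to, \cite[Appendix~A.1]{Beltran_Corral_CriadodelRey2019:GreenEnergy}; it buys complete freedom from summability and Abel-summation questions, and your steps do close up: the ``combinatorial heart'' collapses via $\frac1{j+\beta+1}=\int_0^1 t^{j+\beta}\,dt$ together with $\sum_{j}\binom{\alpha}{j}\binom{j}{m}(-1)^{j-m}t^j=\binom{\alpha}{m}t^m(1-t)^{\alpha-m}$, which after integrating $\rho^{-2\ell-1}$ yields exactly $\binom{\alpha}{\ell}(\ell-1)!/(\alpha+\beta+1-\ell)_\ell$, and the zero-mean condition reproduces the stated digamma constant (the averages $\langle\rho^{-2\ell}\rangle=(\alpha+\beta+2-\ell)_\ell/(\alpha+1-\ell)_\ell$ and $\langle-2\log\rho\rangle=\psi(\alpha+\beta+2)-\psi(\alpha+1)$ check out, e.g.\ for $\alpha=0,1$). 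Two points need tightening. First, applying $\Laplace_\Omega$ termwise to the spectral series is only formal when the series diverges; deduce $\Laplace_x G=\delta_y-1$ instead from the defining property of $G$ as the inverting kernel. Second, for the real projective spaces one has $\beta=-\frac12$, and there mere boundedness at the cut locus does \emph{not} force your first integration constant to vanish: a nonzero $C_1$ gives $G'\sim C_1(1+z)^{-1/2}$, hence a \emph{bounded} spurious term proportional to $\cos\vartheta$; you must invoke smoothness of $G$ across the cut locus (elliptic regularity off the diagonal) or note that a nonvanishing limiting flux there would produce a distributional term supported on the cut locus, contradicting $\Laplace_x G=-1$ on $\Omega\setminus\{y\}$. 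With these repairs, and a remark that regularity plus zero mean determine the radial solution uniquely (your consistency check that the $\delta_y$-mass comes out equal to $1$ then certifies that the constructed function is indeed $G$), your argument is a complete and somewhat more elementary alternative to the paper's proof.
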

\begin{proof}
  Using the addition formula~\eqref{eq:addition} we obtain the \emph{formal}
  expression
\begin{equation*}
  \begin{split}
    &G(x,y)\\
    = &\sum_{n=1}^\infty\frac{2n+\alpha+\beta+1}
    {4(\alpha+\beta+1)n(n+\alpha+\beta+1)}
    \frac{(\alpha+\beta+1)_n}{(\beta+1)_n}
    P_n^{(\alpha,\beta)}(\cos(2\theta(x,y))).
  \end{split}
\end{equation*}
Indeed, this series converges absolutely only for $\alpha<\frac12$,
converges conditionally for $\frac12\leq\alpha<\frac32$, and does not
converge at all for $\alpha\geq\frac32$. We will show that the series converges
in the sense of Abel's summation method (see
\cite{Zeller_Beekmann1970:theorie_limitierungsverfahren}) and we will compute
the limit.

We start with
\cite[p.~301,
eq.~(6.4.7)]{Andrews_Askey_Roy1999:special_functions} 
\begin{multline*}
\sum_{n=0}^\infty\frac{(\alpha+\beta+1)_n}{(\beta+1)_n}P_n^{(\alpha,\beta)}(t)z^n\\
  =\frac1{(1+z)^{\alpha+\beta+1}}\Hypergeom21%
{\frac{\alpha+\beta+1}2,\frac{\alpha+\beta+2}2}%
{\beta+1}{\frac{2z(1+t)}{(1+z)^2}}
\end{multline*}
and define
\begin{equation*}
  \begin{split}
    &\G_{\alpha,\beta}(\cos(2\theta))
    =\frac1\eta\lim_{z\to1}
    \sum_{n=1}^\infty\frac{2n+\eta}{4n(n+\eta)}
    \frac{(\eta)_n}{(\beta+1)_n}P_n^{(\alpha,\beta)}(\cos(2\theta))z^n\\
    =&\int\limits_0^1\!\left(\frac1{(1+z)^{\eta}}
      \Hypergeom21{\frac{\eta}2,\frac{\eta+1}2}
      {\beta+1}{\frac{4\cos(\theta)^2z}{(1+z)^2}}-1\!\right)
    \frac{1+z^{\eta}}{4\eta z}\,dz,
  \end{split}
\end{equation*}
where the convergence of the integral shows the existence of the limit
for $|\cos(\theta)|<1$. Here we have set $\eta=\alpha+\beta+1$ for short, which
we will use for this computation in the sequel.

The coefficient of $\cos(\theta)^{2n}$ for $n\geq1$ is given by
\begin{equation*}
  \frac{(\eta)_{2n}}
  {(\beta+1)_nn!}\int_0^1\frac{z^n}{(1+z)^{2n+\eta}}\frac{1+z^\eta}{4\eta z}\,dz,
\end{equation*}
where we have used
\begin{equation*}
 4^n\left(\frac\eta2\right)_n \left(\frac{\eta+1}2\right)_n=(\eta)_{2n}.
\end{equation*}
The integral is split into two parts; we first use the substitution $z=\frac1t$
to obtain
\begin{equation*}
  \int_0^1\frac{z^{n+\eta-1}}{(1+z)^{2n+\eta}}\,dz
  =\int_1^\infty\frac{t^{n-1}}{(1+t)^{2n+\eta}}\,dt.
\end{equation*}
This shows that
(see~\cite[p.~6]{Magnus_Oberhettinger_Soni1966:formulas_theorems_special})
\begin{equation*}
  \int_0^1\frac{z^n}{(1+z)^{2n+\eta}}\frac{1+z^\eta}{4\eta z}\,dz=
  \frac1{4\eta}\int_0^\infty\frac{z^{n-1}}{(1+z)^{2n+\eta}}\,dz
  =\frac{(n-1)!}{4\eta(n+\eta)_n}.
\end{equation*}
For the constant term in the Taylor expansion we obtain
\begin{equation*}
  C_\eta=-\int_0^1\left(1-\frac1{(1+z)^\eta}\right)\frac{1+z^\eta}{4\eta z}\,dz=
  -\frac1{4\eta}\left(\psi(\eta+1)+\gamma\right).
\end{equation*}
This can be obtained by splitting the integral in a similar way as above with
some slight modification to preserve convergence.

Putting everything together yields
\begin{equation*}
  \begin{split}
    \G_{\alpha,\beta}(\cos(2\theta))=
    -\frac1{4(\alpha+\beta+1)}\Big(&\gamma+\psi(\alpha+\beta+2) \\
    &+
    \sum_{n=1}^\infty\frac{(\alpha+\beta+1)_n}
    {n(\beta+1)_n}\cos(\theta)^{2n}\Big).
  \end{split}
\end{equation*}
The derivation up to now was valid for all $\alpha,\beta>-1$. For the specific
values of $\alpha$ and $\beta$ occurring in the context of projective spaces it
turns out that the functions $\G_{\alpha,\beta}$ are indeed elementary. For an
alternative derivation of the Green function on the spaces $\mathbb{FP}^{d-1}$
we refer to \cite[Appendix~A.1]{Beltran_Corral_CriadodelRey2019:GreenEnergy}

For $\alpha=k\in\N_0$ this is immediate from the observation that
\begin{equation*}
  \frac{(\alpha+\beta+1)_n}{n(\beta+1)_n}=\frac{(n+\beta+1)_k}{n(\beta+1)_k},
\end{equation*}
which is a polynomial in $n$ of degree $k$ divided by $n$. More precisely, we
have
\begin{equation*}
  (n+\beta+1)_k=\sum_{\ell=0}^k\binom k\ell (\beta+1)_{k-\ell}(n)_\ell,
\end{equation*}
from which we derive
\begin{align*}
  &\sum_{n=1}^\infty\frac{(\alpha+\beta+1)_n\cos(\theta)^{2n}}{n(\beta+1)_n}\\
  = &\frac1{(\beta+1)_k}\sum_{\ell=0}^k\binom k\ell(\beta+1)_{k-\ell}
  \sum_{n=1}^\infty\frac{(n)_{\ell}}{n}\cos(\theta)^{2n}\\
  =&  
 \sum_{\ell=1}^k\frac{\binom k\ell(\ell-1)!}{(\beta+k+1-\ell)_\ell}
  \frac1{\sin(\theta)^{2\ell}}+\log\frac1{\sin(\theta)^2}\\
  -&\left(\psi(\beta+k+1)-\psi(\beta+1)\right).
\end{align*}
For $\alpha\in\N_0$ this gives
\begin{align*}
\begin{split}
  &\G_{\alpha,\beta}(\cos(2\theta))= \frac1{4(\alpha+\beta+1)} \Bigg(\sum_{\ell=1}^\alpha\binom \alpha\ell
  \frac{(\ell-1)!}{(\beta+\alpha+1-\ell)_\ell}
 \frac1{\sin(\theta)^{2\ell}} - \gamma \\
  &+\log\frac1{\sin(\theta)^2} -\psi(\alpha+ \beta+1)-\psi(\alpha+\beta+2)-\psi(\beta+1)\Bigg).
   \end{split}
\end{align*}


The case that $\alpha\in\frac12+\N_0$ only occurs in
the case of real projective spaces, when $\beta=-\frac12$. In this case we consider the functions
\begin{align*}
  F_k(z)&=\sum_{n=1}^\infty\frac{(k+1)_n}{n(\frac12)_n}z^n\\
  G_k(z)&=2(k+1)\sum_{n=0}^\infty\frac{(k+2)_n}{(\frac32)_n}z^n
   =2(k+1)\Hypergeom21{1,k+2}{\frac32}{z};
\end{align*}
these satisfy $F_k'=G_k$. From \cite[p.~463,
eqns.~132, 133]{Prudnikov_Brychkov_Marichev1990:integrals_series_3} we infer
that
\begin{equation*}
  G_k(z)=\frac{(\frac32)_{k}}{k!}\frac{\arcsin(\sqrt z)}{\sqrt z}
  \frac1{(1-z)^{k+\frac32}}+
  \sum_{\ell=0}^k\frac{(k+\frac32-\ell)_\ell}{(k+1-\ell)_\ell}
  \frac1{(1-z)^{\ell+1}}.
\end{equation*}
From this we compute
\begin{multline*}
  F_k(z)=\frac{2\sqrt z\arcsin(\sqrt z)}{\sqrt{1-z}}
  \sum_{\ell=0}^k\frac{(\frac12)_\ell}{\ell!}\frac1{(1-z)^\ell}\\
  +  \sum_{\ell=1}^k\frac1\ell\left(\frac{(k+\frac32-\ell)_\ell}
    {(k+1-\ell)_\ell}-\frac{(\frac12)_\ell}{\ell!}\right)\frac1{(1-z)^\ell}
  -H_k,
\end{multline*}
which can be checked by differentiation. We get
\begin{equation*}
  \begin{split}
    \G_{k+\frac12,-\frac12}(\cos(2\theta))=&
    \frac1{2(k+1)}\cos(\theta)\left(\frac\pi2-\theta\right)
    \sum_{\ell=0}^k\frac{(\frac12)_\ell}{\ell!}
    \frac1{\sin(\theta)^{2\ell+1}}\\
    &+
    \frac1{4(k+1)}\sum_{\ell=1}^k\frac1\ell\left(\frac{(k+\frac32-\ell)_\ell}
      {(k+1-\ell)_\ell}
      -\frac{(\frac12)_\ell}{\ell!}\right)\frac1{\sin(\theta)^{2\ell}}\\
    &- \frac1{2(k+1)}H_k-\frac1{4(k+1)^2}.
  \end{split}
\end{equation*}
\end{proof}

Putting everything together we derive the asymptotic main term of the
Green function
\begin{equation}\label{eq:Green-asymp}
  \mathcal{G}_{\alpha,\beta}(\cos(2\theta))=
  \frac{\Gamma(\alpha)\Gamma(\beta+1)}{4\Gamma(\alpha+\beta+2)}
  \frac1{\sin(\theta)^{2\alpha}}+
  \mathcal{O}\left(\frac1{\sin(\theta)^{2\alpha-1}}\right),
\end{equation}
for $\alpha > 0$ and 
\begin{equation}
  \label{eq:RP2Green-asymp}
  \mathcal{G}_{0,-\frac{1}{2}}(\cos(2\theta))=
  -\log(\sin(\theta))+  \mathcal{O}\left(1\right),
\end{equation}
for $\Omega = \mathbb{RP}^2$.

\subsection{Minimizers of the Riesz, Green, and logarithmic Energies}\label{sec:MinimizersOfEnergies}

In this section, we prove that the uniform measure on $\Omega$ induced by the
Haar measure of the group acting on $\Omega$ minimizes the Riesz, Green
and logarithmic energies.

\begin{theorem}\label{thm:UniformMinEnergies}
The logarithmic energy $I_{K_0}$, the Green energy $I_G$, and the Riesz
  $s$-energies $I_{K_s}$, for $0 < s < 2 \alpha + 2$, are uniquely minimized by
  the uniform measure $\sigma$, with
\begin{equation}\label{eq:MinimalRieszEnergyCont}
I_{K_s}(\sigma) = \frac{\Gamma( \alpha + \beta + 2) \Gamma(\alpha + 1 - \frac{s}{2})}{\Gamma(\alpha + 1) \Gamma(\alpha + \beta + 2 - \frac{s}{2})},  
\end{equation}

\begin{equation}\label{eq:MinimalLogEnergyCont}
I_{K_0}(\sigma) = \frac{1}{2}( \psi( \alpha + \beta +2) - \psi( \alpha +1) ),  
\end{equation}
and
\begin{equation}\label{eq:MinimalGreenEnergyCont}
I_{G}(\sigma) = 0.  
\end{equation}
 Moreover, if $\{ \omega_N\}_{N=2}^{\infty}$ is
  a sequence of minimizers for the discrete energies $E_{K_0}$, $E_{K_s}$, or
  $E_G$, then the normalized counting measures 
  \begin{equation*}
\nu_{\omega_N} = \frac{1}{N}\sum_{j=1}^{N} \delta_{z_j},
\end{equation*}
converge weakly to $\sigma$.
\end{theorem}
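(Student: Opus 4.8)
The plan is to reduce both assertions to the strict positive definiteness of the three kernels, read off from their Jacobi coefficients, and then feed this into the standard potential-theoretic machinery. Writing a rotation-invariant kernel as $K(x,y)=F(\cos(2\vartheta(x,y)))$ and expanding $F$ as in \eqref{eq:JacobiExpansion}, the addition formula (Theorem~\ref{thm:Addition Formula}) converts this into $K(x,y)=\sum_{n\geq0}\widehat F(n)\frac{P_n^{(\alpha,\beta)}(1)}{m_n}\sum_{m=1}^{m_n}Y_{n,m}(x)Y_{n,m}(y)$. For any $\nu\in\mathcal{Z}(\Omega)$ of finite energy one then gets, after a Parseval-type interchange, $I_K(\nu)=\sum_{n\geq1}\widehat F(n)\frac{P_n^{(\alpha,\beta)}(1)}{m_n}\sum_{m=1}^{m_n}\big(\int Y_{n,m}\,d\nu\big)^2$, with the $n=0$ term absent because $\nu(\Omega)=0$. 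Since $P_n^{(\alpha,\beta)}(1)=\binom{n+\alpha}{n}>0$ and $m_n>0$, the kernel is strictly positive definite on $\mathcal{Z}(\Omega)$ as soon as $\widehat F(n)>0$ for all $n\geq1$, because then $I_K(\nu)=0$ forces $\int Y_{n,m}\,d\nu=0$ for every $n,m$, hence $\nu=0$ by the completeness in Theorem~\ref{thm:Eigen Decomposition L2}.

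The second step is to compute these coefficients. For $K_s$ I would pass in \eqref{eq:JacobiCoeff} to the variable $t=\cos(2\theta)$ and evaluate $\int_{-1}^1(\tfrac{1-t}{2})^{-s/2}P_n^{(\alpha,\beta)}(t)(1-t)^\alpha(1+t)^\beta\,dt$ via Rodrigues' formula followed by $n$-fold integration by parts; the derivatives of $(1-t)^{-s/2}$ generate a factor $(\tfrac s2)_n$ and reduce everything to a single Beta integral, giving a coefficient that is a positive multiple of $\frac{(s/2)_n}{n!}\,\frac{\Gamma(\alpha+1-\frac s2)\,\Gamma(n+\beta+1)}{\Gamma(\alpha+\beta+2-\frac s2+n)}$, strictly positive and finite exactly for $0<s<2\alpha+2$; the $n=0$ term is a plain Beta integral producing \eqref{eq:MinimalRieszEnergyCont}. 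For $K_0=-\log\rho=\frac{\partial}{\partial s}\rho^{-s}\big|_{s=0}$ I would differentiate the coefficient formula at $s=0$: for $n\geq1$ the coefficient vanishes there (the factor $(s/2)_n$), so its $s$-derivative is positive, while the $n=0$ derivative yields the digamma expression \eqref{eq:MinimalLogEnergyCont}. For the Green function the coefficients are already exhibited in the proof of Proposition~\ref{prop:GreenKernelFormula}, namely $\widehat G(n)=\frac{2n+\alpha+\beta+1}{4(\alpha+\beta+1)n(n+\alpha+\beta+1)}\frac{(\alpha+\beta+1)_n}{(\beta+1)_n}>0$ for $n\geq1$ and $\widehat G(0)=0$, so strict positive definiteness is immediate and the missing constant term gives $I_G(\sigma)=0$.

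The minimization statement is then routine: for a probability measure $\mu$ of finite energy put $\nu=\mu-\sigma\in\mathcal{Z}(\Omega)$, so $I_K(\mu)=I_K(\sigma)+2\iint K\,d\sigma\,d\nu+I_K(\nu)$; by \eqref{eq:JacobiProjection} the inner integral $\int K(x,y)\,d\sigma(x)=\widehat F(0)$ is constant, so the cross term is $\widehat F(0)\,\nu(\Omega)=0$ and $I_K(\mu)=I_K(\sigma)+I_K(\nu)\geq I_K(\sigma)$ with equality iff $\nu=0$ (measures of infinite energy being trivial since $I_K(\sigma)<\infty$). For the convergence of minimizers I would run the lower-envelope argument: averaging the energy over $N$ points drawn i.i.d.\ from $\sigma$ gives $\mathcal{E}_K(\Omega,N)\leq N(N-1)I_K(\sigma)$, hence $\limsup N^{-2}\mathcal{E}_K(\Omega,N)\leq I_K(\sigma)$. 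Given minimizers $\omega_N$ with empirical measures $\nu_{\omega_N}$, weak-$*$ compactness of $\mathbb{P}(\Omega)$ yields a subsequence converging to some $\mu^*$; for each truncation $K_M=\min(K,M)$ (continuous and bounded) weak convergence of the product measures gives $\iint K_M\,d\mu^*\,d\mu^*=\lim\iint K_M\,d\nu_{\omega_N}\,d\nu_{\omega_N}$, and splitting off the diagonal (contribution $M/N\to0$) together with $K_M\leq K$ bounds this by $\limsup N^{-2}\mathcal{E}_K(\Omega,N)\leq I_K(\sigma)$. Letting $M\to\infty$ and using monotone convergence (each kernel is bounded below) yields $I_K(\mu^*)\leq I_K(\sigma)$, so $\mu^*=\sigma$ by the uniqueness above; as every convergent subsequence has the same limit, $\nu_{\omega_N}\rightharpoonup\sigma$.

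The main obstacle I anticipate is the Riesz coefficient evaluation: carrying out the Rodrigues/integration-by-parts computation while justifying that the boundary terms vanish, and isolating precisely the range $0<s<2\alpha+2$ in which the surviving Gamma factors remain positive and finite (equivalently, where $I_{K_s}(\sigma)$ is finite). A secondary point demanding care is the Parseval interchange defining $I_K(\nu)$ and the truncation bookkeeping for the singular kernels — and for the Green function the genuinely hypersingular behaviour $\rho^{-2\alpha}$ near the diagonal — where the diagonal must be handled separately throughout.
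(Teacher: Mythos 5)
Your overall architecture is sound and matches the paper's at the top level: strict (conditional) positive definiteness yields the unique minimizer $\sigma$, and weak-$*$ compactness plus the standard discrete-to-continuous comparison yields uniform distribution of discrete minimizers (the paper imports the latter as Theorem~\ref{thm:RelateDiscreteContEnergies} from Borodachov--Hardin--Saff, whereas you reprove it by truncation, which is fine). Where you genuinely diverge is the positive-definiteness step. The paper never computes $\widehat{F_s}(n)$ in closed form: it regularizes, setting $F_{s,\varepsilon}(t)=\bigl(\varepsilon+\frac{1-t}{2}\bigr)^{-s/2}$, expands in powers of $\frac{1+t}{2(1+\varepsilon)}$, computes the positive Jacobi coefficients of $(1+t)^k$, and passes $\varepsilon\to0$ by monotone convergence of kernels and energies. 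You instead obtain the coefficients of $K_s$ itself via Rodrigues' formula and $n$-fold integration by parts; your formula (a positive multiple of $\frac{(s/2)_n}{n!}\,\frac{\Gamma(\alpha+1-\frac s2)\,\Gamma(n+\beta+1)}{\Gamma(n+\alpha+\beta+2-\frac s2)}$) is correct, the boundary terms do vanish precisely when $s<2\alpha+2$, the $n=0$ Beta integral reproduces \eqref{eq:MinimalRieszEnergyCont}, and differentiation at $s=0$ gives \eqref{eq:MinimalLogEnergyCont} (the paper performs the same digamma computation, for other purposes, in Section~\ref{sec:upper-bounds-riesz}). Your route is more explicit and arguably more informative. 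Your direct identification of $\sigma$ as the minimizer -- the cross term vanishes because the potential of $\sigma$ is the constant $\widehat F(0)$ -- is also a legitimate alternative to the paper's ``uniqueness plus isometry invariance'' argument of Corollary~\ref{cor:RotationInvariantMin}.

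The one genuine gap is the conversion of coefficient positivity into $I_K(\nu)=\sum_n\widehat F(n)(\cdots)\geq 0$ for all finite-energy signed $\nu$. Your Parseval interchange is licensed by Mercer's theorem only for \emph{continuous} kernels; $K_s$, $K_0$, and $G$ are unbounded, so the interchange is exactly what must be proved, and you flag this but do not execute it. For the Riesz and logarithmic kernels the fix is the paper's: work with the continuous $K_{s,\varepsilon}$, whose coefficients are positive and decrease in $\varepsilon$, and use monotone convergence $I_{K_{s,\varepsilon}}(\mu^{\pm})\to I_{K_s}(\mu^{\pm})$; your closed-form coefficients could replace the paper's $(1+t)^k$ computation inside that scheme, but the regularization itself cannot be skipped. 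For the Green kernel the problem is sharper than you acknowledge: the expansion you quote from Proposition~\ref{prop:GreenKernelFormula} is \emph{formal} -- the paper notes it converges absolutely only for $\alpha<\frac12$, conditionally for $\frac12\leq\alpha<\frac32$, and not at all for $\alpha\geq\frac32$ -- so ``strict positive definiteness is immediate'' is an overclaim, and this is precisely why the paper outsources Theorem~\ref{thm:GreenPosDef} to Beltr\'an--Corral--Criado del Rey. A self-contained patch within the paper's toolkit: use the heat-kernel regularization $G_t$ of \eqref{eq:GreenHeatKernel}, whose Jacobi coefficients carry the damping factor $e^{-\lambda_n t}$ and whose series converges uniformly for each $t>0$, together with Lemma~\ref{lem:ElkiesLemma} (namely $G\geq G_t-t$) to pass to the limit $t\to0$. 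A minor terminological slip: $G\sim c\,\rho^{-2\alpha}=c\,\rho^{-(D-2)}$ is singular but integrable, since $D-2<D$; it is not hypersingular.
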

We note that this has already been proven for Green energies in \cite[Theorem~1.1]{Beltran_Corral_CriadodelRey2019:GreenEnergy}, and the uniformity of minimizers for logarithmic and Riesz energies on $\mathbb{RP}^{d-1}$ and $\mathbb{CP}^{d-1}$ was proven in \cite[Theorem 4.1]{Chen_Hardin_Saff2021:search_tight_frames}.

In general, the minimal discrete and continuous energies of some kernel $K$ are related in the following way
\begin{theorem}[\!\!{\cite[Theorem 4.2.2]{BorodachovHardinSaff2019:EnergyRectifiableSets}}]\label{thm:RelateDiscreteContEnergies}
Let $K$ be a lower semi-conti\-nuous
  kernel on $\Omega$, and $\mu$ be a minimizer of $I_K$. Then for all $N \in \mathbb{N}\setminus\{1\}$,
\begin{equation*}
\mathcal{E}_K(N) \leq N(N-1) I_{K}(\mu),
\end{equation*}
and
\begin{equation*}
\lim_{N \rightarrow \infty} \frac{\mathcal{E}_K(N)}{N^2} = I_{K}(\mu).
\end{equation*}
If $\{ \omega_N\}_{N=2}^{\infty}$ is a sequence of optimal $K$-energy
configurations and $\nu$ is a weak$^*$ limit point of the normalized counting
measures $\nu_{\omega_N}$, then $\nu$ is an equilibrium measure of $I_K$.
\end{theorem}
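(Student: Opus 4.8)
The plan is to establish the three assertions separately: the upper bound by a direct averaging (first-moment) argument, and the limit together with the limit-point claim by a compactness argument built on the weak$^*$ lower semi-continuity of the energy functional. Throughout I use that $\Omega$ is compact, so that $K$, being lower semi-continuous, is bounded below, and so that $\mathbb{P}(\Omega)$ is weak$^*$ compact.

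For the upper bound $\mathcal{E}_K(N)\leq N(N-1)I_K(\mu)$, I would draw $N$ independent points $X_1,\dots,X_N$ from the minimizing measure $\mu$ and compute the expected discrete energy. Since $K$ is bounded below the expectation is well defined, and for $i\neq j$ the independence of $X_i,X_j$ gives $\mathbb{E}[K(X_i,X_j)]=\iint K\,d\mu\,d\mu=I_K(\mu)$; summing over the $N(N-1)$ off-diagonal pairs yields $\mathbb{E}[E_K(X_1,\dots,X_N)]=N(N-1)I_K(\mu)$. As every realization satisfies $E_K\geq\mathcal{E}_K(N)$ (with probability one the points are distinct, $\mu=\sigma$ being non-atomic), the minimum cannot exceed the average, giving the stated bound. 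Dividing by $N^2$ already yields $\limsup_{N\to\infty}\mathcal{E}_K(N)/N^2\leq I_K(\mu)$.

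The heart of the argument is the matching lower bound. First I would record the auxiliary fact that for a kernel lower semi-continuous and bounded below on the compact product $\Omega\times\Omega$, the functional $\nu\mapsto I_K(\nu)$ is weak$^*$ lower semi-continuous on $\mathbb{P}(\Omega)$: one writes such a kernel as an increasing pointwise limit of continuous kernels, notes that for a bounded continuous kernel the energy is weak$^*$ continuous (since $\nu_k\to\nu$ forces $\nu_k\times\nu_k\to\nu\times\nu$ on the product), and then realizes $I_K$ as a supremum of weak$^*$ continuous functionals via monotone convergence. To cope with the singular diagonal I would truncate: fix $M$ and set $K_M=\min(K,M)$, again lower semi-continuous and now bounded. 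For an optimal configuration $\omega_N=\{x_1,\dots,x_N\}$, isolating the diagonal gives
\begin{equation*}
  I_{K_M}(\nu_{\omega_N})=\frac1{N^2}\sum_{i,j=1}^N K_M(x_i,x_j)
  \leq\frac1{N^2}\Big(E_K(\omega_N)+NM\Big)
  =\frac{\mathcal{E}_K(N)}{N^2}+\frac MN,
\end{equation*}
using $K_M\leq K$ off the diagonal and $K_M\leq M$ on it.

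To conclude, take a subsequence realizing $\liminf_N\mathcal{E}_K(N)/N^2$ and, by weak$^*$ compactness of $\mathbb{P}(\Omega)$, pass to a further subsequence along which $\nu_{\omega_N}\to\nu$. Lower semi-continuity of $I_{K_M}$ and the displayed inequality give $I_{K_M}(\nu)\leq\liminf_N\mathcal{E}_K(N)/N^2$; letting $M\to\infty$ and invoking monotone convergence ($K_M\uparrow K$, uniformly bounded below) yields $I_K(\nu)\leq\liminf_N\mathcal{E}_K(N)/N^2$. Since $\mu$ minimizes $I_K$ we have $I_K(\mu)\leq I_K(\nu)$, which combined with the upper bound forces $\lim_N\mathcal{E}_K(N)/N^2=I_K(\mu)$. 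Applying the same reasoning to an arbitrary weak$^*$ limit point $\nu$ of the empirical measures of optimal configurations gives $I_K(\nu)\leq\lim_N\mathcal{E}_K(N)/N^2=I_K(\mu)$, whence $I_K(\nu)=I_K(\mu)$ and $\nu$ is an equilibrium measure. The hard part will be the rigorous treatment of the lower semi-continuity lemma in tandem with the truncation — in particular verifying that the diagonal contribution is genuinely $O(N)$ uniformly in $N$ and that the limit $M\to\infty$ commutes with the weak$^*$ limit — while the averaging and compactness steps are then routine.
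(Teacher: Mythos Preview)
The paper does not supply its own proof of this theorem: it is quoted verbatim from \cite[Theorem~4.2.2]{BorodachovHardinSaff2019:EnergyRectifiableSets}, with only the remark that the argument there, stated for compact subsets of $\mathbb{R}^d$, carries over to compact metric spaces. Your outline is precisely the standard proof found in that reference---the first-moment averaging for the upper bound, and truncation plus weak$^*$ lower semi-continuity for the lower bound and the limit-point claim---so there is nothing to compare.

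One small remark: your parenthetical ``$\mu=\sigma$ being non-atomic'' is specific to the present paper's setting and is not part of the hypotheses of the theorem as stated. In the general argument one does not need $\mu$ non-atomic; the existence of a realisation with $\sum_{i\neq j}K(X_i,X_j)\leq N(N-1)I_K(\mu)$ already gives an $N$-tuple with the required energy bound, and if the minimisation defining $\mathcal{E}_K(N)$ is over distinct points one perturbs using lower semi-continuity of $K$. Otherwise the argument is correct, and the issues you flag as the ``hard part'' (the $O(N)$ diagonal, the order of the $M\to\infty$ and weak$^*$ limits) are handled exactly as you describe.
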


Thus, if the continuous energy is minimized by the uniform measure, we should expect uniformly distributed discrete minimizers. That the uniform measure (uniquely) minimizes the energies follows from the strict positive definiteness of the kernels, which we show below.

\begin{definition}\label{def:PosDef}
  We call a kernel \emph{conditionally positive definite} if for all
  $\nu \in \mathcal{Z}(\Omega)$, for which the energy is well defined,
  $I_K(\nu) \geq 0$. We call a kernel \emph{positive definite} if for all
  $\nu \in \mathcal{M}(\Omega)$ for which the energy is well defined,
  $I_K(\nu) \geq 0$.  We call a kernel \emph{strictly positive definite} or
  \emph{conditionally strictly positive definite} if it is positive definite
  or conditionally positive definite, respectively, and $I_K(\nu) = 0$ only if
  $\nu = 0$.
\end{definition}

\begin{theorem}[see {\cite[Theorem~4.2.7]%
    {BorodachovHardinSaff2019:EnergyRectifiableSets}}]
  \label{thm:StrictPosDefUniqueMin}
  If $K$ is conditionally strict\-ly positive definite, then $I_K$ has a unique
  minimizer in $\mathbb{P}(\Omega)$.
\end{theorem}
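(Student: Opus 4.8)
The plan is to recover the standard convexity argument for conditionally strictly positive definite kernels, treating existence and uniqueness separately. For existence, I would first note that since $K$ is lower semi-continuous on the compact space $\Omega\times\Omega$ it is bounded below, so the functional $\mu\mapsto I_K(\mu)$ is weak-$*$ lower semi-continuous on the weak-$*$ compact set $\mathbb{P}(\Omega)$; moreover the minimal value is finite because $\sigma$ has finite energy for each kernel under consideration (as computed in Theorem~\ref{thm:UniformMinEnergies}). Hence a minimizer exists, and the real content of the theorem is the uniqueness, which is where conditional strict positive definiteness enters.

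For uniqueness, suppose $\mu_1,\mu_2\in\mathbb{P}(\Omega)$ both minimize $I_K$, with common minimal value $m=I_K(\mu_1)=I_K(\mu_2)$, and set $\nu=\mu_1-\mu_2$. Since $\mu_1$ and $\mu_2$ are probability measures, $\nu\in\mathcal{Z}(\Omega)$. The key observation is that $I_K$ is a quadratic form: using the bilinearity of the mutual energy $I_K(\mu,\lambda)=\iint_{\Omega\times\Omega}K\,d\mu\,d\lambda$, the function $t\mapsto I_K\bigl((1-t)\mu_1+t\mu_2\bigr)$ is a quadratic polynomial in $t$ whose coefficient of $t^2$ is exactly $I_K(\nu)$. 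Evaluating at the midpoint and using that $\tfrac12(\mu_1+\mu_2)\in\mathbb{P}(\Omega)$ is therefore no better than $m$, one obtains the parallelogram-type identity
\begin{equation*}
  I_K\!\left(\frac{\mu_1+\mu_2}{2}\right)= m-\frac14 I_K(\nu) \ge m,
\end{equation*}
which forces $I_K(\nu)\le 0$. On the other hand, conditional positive definiteness gives $I_K(\nu)\ge 0$, so $I_K(\nu)=0$; the strictness assumption then yields $\nu=0$, i.e.\ $\mu_1=\mu_2$.

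The step that requires real care---and the main obstacle---is justifying the bilinear expansion for the singular and hypersingular kernels of interest, where $K$ may be unbounded and the energy may take the value $+\infty$. Concretely, I must ensure that the mutual energy $I_K(\mu_1,\mu_2)$ is finite and that $\nu=\mu_1-\mu_2$ lies in the class of mass-zero measures for which $I_K(\nu)$ is well defined, so that conditional positive definiteness actually applies. After normalizing $K$ to be nonnegative (which only shifts the energies of measures in $\mathbb{P}(\Omega)$ by a constant and leaves the energies of measures in $\mathcal{Z}(\Omega)$ unchanged), I would control $I_K(\mu_1,\mu_2)$ using the finiteness of the self-energies $I_K(\mu_i)=m$ together with a Cauchy--Schwarz inequality for the semi-inner product induced by $K$ on $\mathcal{Z}(\Omega)$, comparing each $\mu_i$ against the reference measure $\sigma$, whose potential is constant on these homogeneous spaces and hence bounded. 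This is precisely the technical point handled in the cited reference; once finiteness of the mutual energy is secured, the quadratic identity and the convexity conclusion above go through verbatim.
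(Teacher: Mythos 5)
The paper does not prove this statement itself---it imports it from \cite[Theorem~4.2.7]{BorodachovHardinSaff2019:EnergyRectifiableSets}, remarking only that the proof there (stated for compact subsets of $\mathbb{R}^d$) generalizes to compact metric spaces---and your argument is exactly that standard proof: existence by weak-$*$ compactness of $\mathbb{P}(\Omega)$ and lower semicontinuity of $I_K$, uniqueness via the quadratic identity $I_K\bigl(\tfrac{\mu_1+\mu_2}{2}\bigr)=m-\tfrac14 I_K(\mu_1-\mu_2)$ combined with conditional strict positive definiteness applied to $\mu_1-\mu_2\in\mathcal{Z}(\Omega)$. Your identification of the finiteness of the mutual energy $I_K(\mu_1,\mu_2)$ as the one delicate step, handled by normalizing $K\geq 0$ and a Cauchy--Schwarz inequality for the semi-inner product induced on $\mathcal{Z}(\Omega)$, is likewise how the cited reference resolves it, so the proposal is correct and takes essentially the same route.
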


We note that the proofs for Theorems \ref{thm:RelateDiscreteContEnergies} and
\ref{thm:StrictPosDefUniqueMin} given in
\cite{BorodachovHardinSaff2019:EnergyRectifiableSets} are for compact subsets
of $\mathbb{R}^d$, but they can be generalized to compact metric spaces.

\begin{corollary}\label{cor:RotationInvariantMin}
If $K$ is conditionally strictly positive definite and isometry invariant, then $\sigma$ is the unique minimizer of $I_K$.
\end{corollary}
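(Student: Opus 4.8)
The plan is to combine the abstract uniqueness result Theorem~\ref{thm:StrictPosDefUniqueMin} with the symmetry of the kernel. First I would apply Theorem~\ref{thm:StrictPosDefUniqueMin}: since $K$ is conditionally strictly positive definite, $I_K$ has a unique minimizer $\mu\in\mathbb{P}(\Omega)$. It then remains only to identify $\mu$ with $\sigma$, and the lever for doing so is the assumed isometry invariance of $K$.

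The key step is to show that the unique minimizer must itself be invariant under the isometry group $G$ of $\Omega$. Fix $g\in G$ and let $g_*\mu$ denote the pushforward, characterized by $\int f\,d(g_*\mu)=\int f(gx)\,d\mu(x)$. Using the invariance $K(gx,gy)=K(x,y)$ together with a change of variables,
\begin{equation*}
  I_K(g_*\mu)=\iint K(x,y)\,d(g_*\mu)(x)\,d(g_*\mu)(y)
  =\iint K(gx,gy)\,d\mu(x)\,d\mu(y)=I_K(\mu).
\end{equation*}
Thus $g_*\mu$ attains the same energy as $\mu$ and is therefore also a minimizer; by the uniqueness in Theorem~\ref{thm:StrictPosDefUniqueMin} we get $g_*\mu=\mu$ for every $g\in G$, i.e.\ $\mu$ is a $G$-invariant Borel probability measure.

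Finally I would invoke the homogeneous structure $\Omega\simeq G/G_a$ with $G$ compact and acting transitively. On a compact homogeneous space there is exactly one $G$-invariant Borel probability measure, namely the normalized Haar-induced measure $\sigma$ introduced in Section~\ref{sec:class-two-point}; hence $\mu=\sigma$. I expect the only genuine subtlety to be this last point---uniqueness of the invariant probability measure---which is where the two-point homogeneous (rather than merely positive-definite) structure of $\Omega$ actually enters; the preceding pushforward computation is routine, and the well-definedness of $g_*\mu$ as a competitor is automatic, since its energy equals the finite minimal value $I_K(\mu)$.
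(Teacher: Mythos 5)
Your proof is correct and follows essentially the same route as the paper: push $\mu$ forward by an isometry, use $K(gx,gy)=K(x,y)$ to conclude $I_K(g_{\#}\mu)=I_K(\mu)$, and invoke the uniqueness from Theorem~\ref{thm:StrictPosDefUniqueMin} to get $G$-invariance of $\mu$. The only difference is that you spell out the final step---uniqueness of the $G$-invariant probability measure on the compact homogeneous space $\Omega\simeq G/G_a$---which the paper leaves implicit, and your identification of this as the one genuinely structural ingredient is accurate.
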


\begin{proof}
Suppose that $\mu$ minimizes $I_K$. Since $K$ is isometry invariant, we have for all $g \in G$,
\begin{align*}
I_K(\mu) & = \iint\limits_{\Omega\times\Omega} K(x,y) \,d \mu(x) \,d\mu(y) \\
& = \iint\limits_{\Omega\times\Omega} K(g(x),g(y)) \,d \mu(x) \,d\mu(y) \\
& = I_K( g_{\#}(\mu))
\end{align*}
where $g_{\#}(\mu)$ is the pushforward measure of $\mu$ under $g$. Since $\mu$ must be unique, it must be isometry invariant, giving us our claim.
\end{proof}

\begin{lemma}\label{lem:UniformConvergenceJacobi}
If $F \in C([-1,1])$ and $K(x,y) = F(\cos(2 \vartheta(x,y)))$ is conditionally positive definite, then
\begin{equation*}F(t) = \sum_{n=0}^{\infty} \widehat{F}(n) P_n^{(\alpha, \beta)}(t),\end{equation*}
where the series converges absolutely and uniformly on $[-1,1]$.
\end{lemma}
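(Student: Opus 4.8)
The plan is to read off the sign of the Jacobi coefficients $\widehat{F}(n)$ from the hypothesis of conditional positive definiteness and then invoke Mercer's theorem, recorded after \eqref{eq:JacobiCoeff}, which upgrades $L^2$-convergence to absolute and uniform convergence once all coefficients are nonnegative. The one point demanding care is that conditional positive definiteness tests $K$ only against mean-zero measures, so it constrains $\widehat{F}(n)$ for $n\geq 1$ but says nothing about the constant term $\widehat{F}(0)$; this term will have to be split off and treated by hand.

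First I would relate each coefficient to the energy of a suitable signed measure. Fix $n\geq 1$ and a basis element $Y_{n,m}$ of the eigenspace $V_n$, and set $d\nu = Y_{n,m}\,d\sigma$. Since $Y_{n,m}$ is orthogonal to the constants for $n\geq 1$, we have $\nu(\Omega)=\int Y_{n,m}\,d\sigma = 0$, so $\nu\in\mathcal{Z}(\Omega)$. Expanding $F$ in its ($L^2$-convergent) Jacobi series and using the addition formula \eqref{eq:addition} to replace each $P_k^{(\alpha,\beta)}(\cos(2\vartheta(x,y)))$ by $\frac{P_k^{(\alpha,\beta)}(1)}{m_k}\sum_{j}Y_{k,j}(x)Y_{k,j}(y)$, the orthonormality of the $Y_{k,j}$ collapses the double integral to a single surviving term, giving
\begin{equation*}
  I_K(\nu) = \widehat{F}(n)\,\frac{P_n^{(\alpha,\beta)}(1)}{m_n}.
\end{equation*}
(Equivalently, the $Y_{k,j}$ are eigenfunctions of the integral operator with kernel $K$ and eigenvalue $\widehat{F}(k)P_k^{(\alpha,\beta)}(1)/m_k$.) The interchange of summation and integration is justified by $L^2$-convergence of the series together with continuity of the pairing against the fixed $L^2$-function $Y_{n,m}$. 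Because $K$ is conditionally positive definite and $\nu\in\mathcal{Z}(\Omega)$, we have $I_K(\nu)\geq 0$; since $P_n^{(\alpha,\beta)}(1)=\binom{n+\alpha}{n}>0$ and $m_n>0$, this forces $\widehat{F}(n)\geq 0$ for every $n\geq 1$.

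Finally I would isolate the uncontrolled $n=0$ term. As $P_0^{(\alpha,\beta)}\equiv 1$, the function $G(t):=F(t)-\widehat{F}(0)$ is continuous on $[-1,1]$ with Jacobi coefficients $\widehat{G}(0)=0$ and $\widehat{G}(n)=\widehat{F}(n)\geq 0$ for $n\geq 1$, so all coefficients of $G$ are nonnegative. Mercer's theorem then yields that $\sum_{n=1}^{\infty}\widehat{F}(n)P_n^{(\alpha,\beta)}(t)$ converges absolutely and uniformly to $G$ on $[-1,1]$, and adding back the constant $\widehat{F}(0)$ gives the absolute and uniform convergence of $\sum_{n=0}^{\infty}\widehat{F}(n)P_n^{(\alpha,\beta)}$ to $F$. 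The only genuine obstacle is precisely the handling of the unconstrained $n=0$ coefficient; once it is separated off, the remainder reduces to the sign computation above and a direct appeal to Mercer's theorem.
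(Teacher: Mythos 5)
Your proof is correct, but it routes the first step differently than the paper does. The paper's proof is two lines: it cites Bochner's theorem to assert that $K+C$ is positive definite for a sufficiently large constant $C$, thereby reduces without loss of generality to the positive definite case, observes that the integral operator $T_K\colon f\mapsto\int_\Omega K(\cdot,y)f(y)\,d\sigma(y)$ then has nonnegative eigenvalues on the Laplace eigenspaces, and finishes with Mercer's theorem. You instead prove the needed sign information by hand: pairing $K$ against the mass-zero measures $d\nu=Y_{n,m}\,d\sigma$ for $n\geq 1$ and evaluating $I_K(\nu)=\widehat{F}(n)\,P_n^{(\alpha,\beta)}(1)/m_n\geq 0$ via the addition formula and orthonormality, then subtracting $\widehat{F}(0)$ rather than adding a constant before invoking Mercer. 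The two constant-shifting maneuvers are mathematically equivalent (a constant shift moves only the $n=0$ coefficient), but your version is self-contained where the paper leans on the Bochner citation, and it makes the spectral data of $T_K$ explicit. One small point: your justification of the series/integral interchange is terse — strictly one should note that $L^2([0,\frac{\pi}{2}],\nu^{(\alpha,\beta)})$-convergence of the Jacobi series for $F$ transfers to $L^2(\sigma\times\sigma)$-convergence of the bivariate kernel series via \eqref{eq:JacobiProjection} before pairing against $Y_{n,m}(x)Y_{n,m}(y)$ — but your parenthetical remark already contains the cleanest repair: since $K$ is continuous and zonal, Schur's lemma together with the irreducibility of the eigenspaces (Proposition \ref{prop:GHMIrreducibleSubspaces}) yields $T_K Y_{n,m}=\widehat{F}(n)\frac{P_n^{(\alpha,\beta)}(1)}{m_n}\,Y_{n,m}$ directly, with no interchange needed. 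Net effect: the paper's argument is shorter; yours buys independence from Bochner's theorem at the cost of a Funk--Hecke-type computation, and both end at the same application of Mercer recorded after \eqref{eq:JacobiCoeff}.
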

\begin{proof}
  We first observe that $K(x,y)+C$ is positive definite for a large enough
  constant $C$ by \cite[Theorem~2]{Bochner1941:Positive_Definite}. Thus we can
  assume without loss of generality that $K(x,y)$ is positive definite. Then
  the eigenfunctions of the operator
  $T_K:f\mapsto\int_\Omega K(x,y)f(y)\,d\sigma(y)$ are the eigenfunctions of
  the Laplace operator and the corresponding eigenvalues are
  positive. Then the claim follows from Mercer's theorem (see, for instance
  \cite{Ferreira_Menegatto2009:eigenvalues_integral_operators}).
\end{proof}

\begin{theorem}\label{thm:PosDefCoefficients}
If $F \in C([-1,1])$, then $K(x,y) = F( \cos(2 \vartheta(x,y)))$ is
\begin{enumerate}
\item\label{pt:a} conditionally positive definite if and only if
  $\widehat{F}(n) \geq 0$ for all $n \in \mathbb{N}$,
\item\label{pt:b} positive definite if and only if $\widehat{F}(n) \geq 0$ for
  all $n \in \mathbb{N}_0$,
\item\label{pt:c} conditionally strictly positive definite if and only if
  $\widehat{F}(n) > 0$ for all $n \in \mathbb{N}$,
\item\label{pt:d} strictly positive definite if and only if $\widehat{F}(n) > 0$
  for all $n \in \mathbb{N}_0$.
\end{enumerate}
\end{theorem}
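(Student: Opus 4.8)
The plan is to reduce everything to the spectral decomposition of the integral operator $T_Kf(x)=\int_\Omega K(x,y)f(y)\,d\sigma(y)$. Since $F\in C([-1,1])$ the kernel is bounded and continuous, so $I_K(\nu)$ is finite for every $\nu\in\mathcal{M}(\Omega)$ and the qualifier ``for which the energy is well defined'' is automatic. First I would compute the eigenvalues of $T_K$. Fixing orthonormal bases $Y_{n,1},\dots,Y_{n,m_n}$ of the eigenspaces $V_n$, the addition formula (Theorem~\ref{thm:Addition Formula}) rewrites each Jacobi polynomial as $P_n^{(\alpha,\beta)}(\cos(2\vartheta(x,y)))=\frac{P_n^{(\alpha,\beta)}(1)}{m_n}\sum_m Y_{n,m}(x)Y_{n,m}(y)$. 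The Jacobi expansion $F=\sum_n\widehat F(n)P_n^{(\alpha,\beta)}$ converges in $L^2$, and by \eqref{eq:JacobiProjection} (applied to $(F-\sum_{k\le N}\widehat F(k)P_k^{(\alpha,\beta)})^2$) the partial sums $\sum_{k\le N}\widehat F(k)P_k^{(\alpha,\beta)}(\cos(2\vartheta(x,\cdot)))$ converge to $K(x,\cdot)$ in $L^2(\Omega,\sigma)$ for each fixed $x$. Pairing with $Y_{n,m}$ and using orthonormality then yields, rigorously and with no recourse to uniform convergence, that $T_KY_{n,m}=\mu_nY_{n,m}$ with $\mu_n=\widehat F(n)\frac{P_n^{(\alpha,\beta)}(1)}{m_n}$. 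Because $P_n^{(\alpha,\beta)}(1)=\binom{n+\alpha}{n}>0$ and $m_n>0$, the sign of $\mu_n$ is exactly that of $\widehat F(n)$.

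The necessity (``only if'') directions of all four parts then follow from a single family of test measures. For each $n,m$ set $d\nu=Y_{n,m}\,d\sigma$, a nonzero finite signed measure; a direct computation using $T_KY_{n,m}=\mu_nY_{n,m}$ and $\int Y_{n,m}^2\,d\sigma=1$ gives $I_K(\nu)=\mu_n=\widehat F(n)\frac{P_n^{(\alpha,\beta)}(1)}{m_n}$. Since $Y_{0,1}\equiv1$, this measure lies in $\mathcal{Z}(\Omega)$ precisely when $n\ge1$, so conditional positive definiteness forces $\widehat F(n)\ge0$ for $n\ge1$ while full positive definiteness additionally forces $\widehat F(0)\ge0$. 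For the strict statements, if $\widehat F(n_0)=0$ then $I_K(Y_{n_0,1}\sigma)=0$ with $Y_{n_0,1}\sigma\ne0$, contradicting (conditional) strictness.

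For the sufficiency (``if'') directions I would invoke Mercer's theorem. If $\widehat F(n)\ge0$ for all $n\ge0$, then continuity of $F$ and nonnegativity of the coefficients give absolute and uniform convergence of $F=\sum_n\widehat F(n)P_n^{(\alpha,\beta)}$, hence of $K(x,y)=\sum_n\mu_n\sum_m Y_{n,m}(x)Y_{n,m}(y)$ on the compact set $\Omega\times\Omega$. Uniform convergence together with finiteness of $\nu$ legitimizes term-by-term integration, yielding $I_K(\nu)=\sum_n\mu_n\sum_m\widehat\nu(n,m)^2\ge0$, where $\widehat\nu(n,m)=\int Y_{n,m}\,d\nu$; this proves full positive definiteness. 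In the conditional case, where only $\widehat F(n)\ge0$ for $n\ge1$ is assumed, I would apply the same argument to $\tilde F=F-\widehat F(0)$, whose coefficients are all nonnegative; since $\widehat\nu(0,1)=\nu(\Omega)=0$ for $\nu\in\mathcal{Z}(\Omega)$, the constant shift contributes $\widehat F(0)\,\nu(\Omega)^2=0$ and one gets $I_K(\nu)=\sum_{n\ge1}\mu_n\sum_m\widehat\nu(n,m)^2\ge0$.

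Finally, for strict sufficiency the same expansion $I_K(\nu)=\sum_n\mu_n\sum_m\widehat\nu(n,m)^2$, now with $\mu_n>0$ (resp. $\mu_n>0$ for $n\ge1$), shows that $I_K(\nu)=0$ forces $\widehat\nu(n,m)=0$ for all relevant $n,m$, the mode $n=0$ being handled automatically by $\nu(\Omega)=0$ in the conditional case. The remaining point, which I expect to be the main obstacle, is to pass from the vanishing of all Fourier coefficients of a signed measure to $\nu=0$; this requires that finite linear combinations of the $Y_{n,m}$ be uniformly dense in $C(\Omega)$. I would obtain this from the ambient embedding $\Omega\subset\mathbb{S}^{\tilde d-1}\subset\mathbb{R}^{\tilde d}$: the coordinate functions separate points and contain the constants, so by the Stone--Weierstrass theorem the restrictions of polynomials are dense in $C(\Omega)$, while Proposition~\ref{prop:finite-invariant} shows that each finite-dimensional $G$-invariant space of polynomial restrictions decomposes into finitely many eigenspaces and is therefore contained in the linear span of the $Y_{n,m}$. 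Hence $\int f\,d\nu=0$ for a dense subset of $C(\Omega)$, and the Riesz representation theorem gives $\nu=0$.
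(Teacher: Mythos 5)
Your proof is correct, and it takes a genuinely more self-contained route than the paper's. The paper dispatches the necessity (``only if'') direction of all four parts, and the sufficiency of parts \eqref{pt:a}, \eqref{pt:b}, and \eqref{pt:c}, by citing Bochner's 1941 characterization of positive definite functions on compact homogeneous spaces; the only direction it argues in full is the sufficiency in part \eqref{pt:d}, using Lemma~\ref{lem:UniformConvergenceJacobi} (Mercer), the addition formula~\eqref{eq:addition}, and an \emph{asserted} density of $\operatorname{span}\{Y_{n,k}\}$ in $C(\Omega)$. You instead derive everything from the single spectral identity $T_K Y_{n,m} = \widehat{F}(n)\frac{P_n^{(\alpha,\beta)}(1)}{m_n} Y_{n,m}$, which you justify carefully via \eqref{eq:JacobiProjection} without needing uniform convergence at that stage: the necessity directions come from the explicit test measures $Y_{n,m}\,d\sigma$ (those with $n\geq 1$ lying in $\mathcal{Z}(\Omega)$, with $\sigma$ itself handling the $n=0$ mode), sufficiency comes from Mercer plus term-by-term integration, the conditional cases reduce to the full ones by the constant shift $F-\widehat{F}(0)$, and --- most valuably --- you actually prove the density statement the paper takes for granted, via Stone--Weierstrass in the ambient embedding $\Omega\subset\mathcal{H}(\FF^d)$ combined with Proposition~\ref{prop:finite-invariant}. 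What Bochner's theorem buys the paper is brevity and a ready-made bridge between the conditional and unconditional notions; what your argument buys is a fully self-contained proof with explicit eigenvalues and a proof of the missing density lemma, at the cost of redoing in this setting what Bochner provides in general. One point worth making explicit in your final step: for the restrictions of polynomials of degree at most $k$ to form a $G$-invariant subspace, you need the isometry group to act \emph{linearly} on the ambient space --- which holds here, since $g$ acts by $\Pi \mapsto g\Pi g^{-1}$ on $\mathcal{H}(\FF^d)$, and $F_4$ acts by linear Jordan-algebra automorphisms of $\mathcal{H}(\mathbb{O}^3)$ --- as this is the hinge on which Proposition~\ref{prop:finite-invariant} becomes applicable.
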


\begin{proof}
  The forward direction for each of these follows from \cite[Theorem~2 and
  Section~III]{Bochner1941:Positive_Definite}, with a slight alteration for the
  strict case.

  Part \eqref{pt:b} then follows from
  \cite[Lemma~2]{Bochner1941:Positive_Definite} and that uniform limits of
  positive definite functions are positive definite.

Now, suppose that $\widehat{F}(n) > 0$ for all $n \in \mathbb{N}_0$. Then $K$ is positive definite, so by Lemma \ref{lem:UniformConvergenceJacobi}, the addition formula~\eqref{eq:addition}, and the density of $\{ Y_{n,k}: n \in \mathbb{N}_0, k \in \{1, ..., \dim(V_n)\}\}$ in $C(\Omega)$, we have for any $\mu \in \mathcal{M}(\Omega)$, not identically zero,
\begin{align*}
I_K(\mu) & = \sum_{n=0}^{\infty} \widehat{F}(n) \iint\limits_{\Omega\times\Omega} P_n^{(\alpha, \beta)}(\cos(2 \vartheta(x,y))) \,d\mu(x) \,d\mu(y) \\
& = \sum_{n=0}^{\infty} \sum_{k=1}^{m_n} \frac{\widehat{F}(n) P_n^{(\alpha, \beta)}(1)}{m_n} \iint\limits_{\Omega\times\Omega} Y_{n,k}(x) Y_{n,k}(y) \,d\mu(x) \,d\mu(y) \\
& = \sum_{n=0}^{\infty} \sum_{k=1}^{m_n} \frac{\widehat{F}(n) P_n^{(\alpha, \beta)}(1)}{m_n} \Big( \int_{\Omega} Y_{n,k}(x) \,d\mu(x) \Big)^2 > 0,
\end{align*}
proving \eqref{pt:d}. Parts \eqref{pt:a} and \eqref{pt:c} now follow from
\cite[Theorem~2]{Bochner1941:Positive_Definite}.
 
\end{proof}

\begin{theorem}\label{thm:PosDefRiesz}
  For $0 \leq s <D$, the Riesz kernel $K_{s}$ is strictly positive
  definite.
\end{theorem}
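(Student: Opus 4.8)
The plan is to reduce the claim to the positivity of the Jacobi coefficients of $K_s$ and then to evaluate these coefficients explicitly. Writing $t=\cos(2\vartheta(x,y))$ and using $\rho(x,y)^2=\sin^2(\vartheta(x,y))=\tfrac{1-t}2$, the Riesz kernel takes the form $K_s(x,y)=F_s(\cos(2\vartheta(x,y)))$ with $F_s(t)=2^{s/2}(1-t)^{-s/2}$. The obstruction to a direct appeal to Theorem~\ref{thm:PosDefCoefficients} is that $F_s$ has a singularity at $t=1$ (the diagonal), so $F_s\notin C([-1,1])$. Nonetheless the coefficients $\widehat{F_s}(n)$ defined by \eqref{eq:JacobiCoeff} still make sense: passing from $d\nu^{(\alpha,\beta)}$ to the variable $t$ as in \eqref{eq:JacobiProjection} and \eqref{eq:nualpha}, each $\widehat{F_s}(n)$ is a strictly positive multiple of
\[
J_n=\int_{-1}^1 (1-t)^{\alpha-\frac s2}(1+t)^{\beta}P_n^{(\alpha,\beta)}(t)\,dt ,
\]
and the weight $(1-t)^{\alpha-s/2}$ is integrable precisely because $\alpha-\tfrac s2>-1$, i.e. $s<2\alpha+2=D$.

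Next I would evaluate $J_n$ in closed form. Substituting Rodrigues' formula for $P_n^{(\alpha,\beta)}$ and integrating by parts $n$ times, all boundary terms vanish: at $t=1$ the surviving factor is of order $(1-t)^{\alpha+1-\frac s2}\to 0$ since $\alpha+1-\tfrac s2>0$, and at $t=-1$ one uses $\beta>-1$. With $\frac{d^n}{dt^n}(1-t)^{-s/2}=\left(\tfrac s2\right)_n(1-t)^{-s/2-n}$ and a Beta integral one obtains
\[
J_n=\frac{(\frac s2)_n}{n!}\,2^{\alpha+\beta+1-\frac s2}\,
\frac{\Gamma(\alpha+1-\frac s2)\,\Gamma(n+\beta+1)}{\Gamma(n+\alpha+\beta+2-\frac s2)} .
\]
For $0<s<D$ every factor is strictly positive: $(\tfrac s2)_n>0$, and the arguments of all three $\Gamma$-factors are positive because $\alpha+1-\tfrac s2>0$ and $\beta>-1$ (so that $n+\alpha+\beta+2-\tfrac s2>0$). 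Hence $\widehat{F_s}(n)>0$ for every $n\in\N_0$; the case $n=0$ reproduces the value $I_{K_s}(\sigma)$ in \eqref{eq:MinimalRieszEnergyCont}.

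It remains to convert this coefficient positivity into strict positive definiteness despite the singularity of $F_s$. I would do this by approximation: for $\varepsilon>0$ the smoothed kernels $K_s^{(\varepsilon)}(x,y)=(\rho(x,y)^2+\varepsilon)^{-s/2}$ correspond to $F_s^{(\varepsilon)}(t)=(\tfrac{1-t}2+\varepsilon)^{-s/2}\in C([-1,1])$. Since $u\mapsto(u+\varepsilon)^{-s/2}$ is a non-constant completely monotone function of $u=\rho^2$, its Bernstein representation writes $K_s^{(\varepsilon)}$ as a positive integral of Gaussian kernels $e^{-\lambda\rho^2}$, each of which is (strictly) positive definite on the ambient sphere $\mathbb{S}^{\tilde d-1}\supset\Omega$ into which the chordal metric embeds isometrically; consequently $\widehat{F_s^{(\varepsilon)}}(n)>0$ for all $n$ and $K_s^{(\varepsilon)}$ is strictly positive definite by Theorem~\ref{thm:PosDefCoefficients}\eqref{pt:d}. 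Finally $K_s^{(\varepsilon)}\uparrow K_s$ pointwise as $\varepsilon\downarrow 0$; for $\nu\in\mathcal{M}(\Omega)$ with $I_{K_s}(|\nu|)<\infty$, splitting $\nu=\nu^+-\nu^-$ and applying the monotone convergence theorem to the three positive double integrals $\iint K_s^{(\varepsilon)}\,d\nu^{\pm}d\nu^{\pm}$ and $\iint K_s^{(\varepsilon)}\,d\nu^{+}d\nu^{-}$ yields $I_{K_s^{(\varepsilon)}}(\nu)\to I_{K_s}(\nu)$. Thus $I_{K_s}(\nu)\ge 0$, and if $I_{K_s}(\nu)=0$ then $0\le I_{K_s^{(\varepsilon)}}(\nu)\le I_{K_s}(\nu)=0$ forces $\nu=0$ by the strict positive definiteness of each $K_s^{(\varepsilon)}$.

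The main obstacle is precisely this last step. The closed-form evaluation of $J_n$ is a routine Rodrigues-plus-Beta computation once the vanishing of the boundary terms is checked, but the diagonal singularity of $F_s$ blocks a direct application of Theorem~\ref{thm:PosDefCoefficients}. Making the argument rigorous therefore hinges on two points: that non-constant completely monotone functions of the squared chordal distance have strictly positive Jacobi coefficients on $\Omega$, and that the interchange of the limit $\varepsilon\downarrow0$ with the double integral is legitimate for signed measures, which is where the finite-energy hypothesis $I_{K_s}(|\nu|)<\infty$ (available exactly for $s<D$) is used. I note that at the endpoint $s=0$ the kernel degenerates to the constant $1$, for which $\widehat{F_0}(n)=0$ for $n\ge1$, so strictness genuinely requires $s>0$.
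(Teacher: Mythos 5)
Your argument for $0<s<D$ is essentially the paper's proof: the same smoothing $K_{s,\varepsilon}(x,y)=\bigl(\varepsilon+\rho(x,y)^2\bigr)^{-s/2}$, the same split $\nu=\nu^+-\nu^-$ with monotone convergence to pass to the limit $\varepsilon\downarrow0$, and positivity of the smoothed Jacobi coefficients (the paper gets this by the binomial expansion \eqref{eq:RieszEpsilonSeries} in powers of $(1+t)$ together with the explicit Jacobi coefficients of $(1+t)^k$, where you instead use a Bernstein representation by Gaussians; both are fine, and your Rodrigues-plus-Beta evaluation of $\widehat{F_s}(n)$ for the singular kernel itself is a correct bonus the paper does not carry out). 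However, there is a genuine gap at the endpoint: the theorem asserts strict positive definiteness for $0\leq s<D$, where $K_0$ is \emph{not} the constant kernel but the logarithmic kernel $K_0(x,y)=-\log\rho(x,y)$, as defined in the introduction. Your closing remark that ``at $s=0$ the kernel degenerates to the constant $1$, so strictness genuinely requires $s>0$'' misreads the statement and leaves the case $s=0$ unproven; the paper handles it in parallel with $s>0$ via $F_{0,\varepsilon}(t)=-\tfrac12\log\bigl(\varepsilon+\tfrac{1-t}2\bigr)$ and the expansion \eqref{eq:LogEpsilonSeries}, whose coefficients are again strictly positive (note $\widehat{F_0}(0)=I_{K_0}(\sigma)=\tfrac12(\psi(\alpha+\beta+2)-\psi(\alpha+1))>0$). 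Your method extends verbatim, since $-\tfrac12\log(u+\varepsilon)$ differs from a completely monotone function by a constant, but as written the log case is missing and the theorem is even asserted to fail there.

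A second, smaller gap is in your final strictness step: the chain $0\leq I_{K_s^{(\varepsilon)}}(\nu)\leq I_{K_s}(\nu)$ is not automatic for signed $\nu$, because the pointwise inequality $K_s^{(\varepsilon)}\leq K_s$ controls $I(\nu^\pm)$ the wrong way in the cross term $-2\iint K\,d\nu^+d\nu^-$. You need that the \emph{difference} $K_s-K_s^{(\varepsilon)}$ is itself positive definite, which your own Bernstein setup supplies, since
\begin{equation*}
  u^{-s/2}-(u+\varepsilon)^{-s/2}
  =\frac1{\Gamma(s/2)}\int_0^\infty \lambda^{\frac s2-1}
  \bigl(1-e^{-\lambda\varepsilon}\bigr)e^{-\lambda u}\,d\lambda
\end{equation*}
is again a nonnegative mixture of Gaussians in $u=\rho^2$; alternatively one can argue as the paper does, bounding $I_{K_{s,\varepsilon}}(\nu)$ from below, uniformly in $\varepsilon$, by a single term $\widehat{F_{s,\varepsilon_0}}(m)\iint P_m^{(\alpha,\beta)}\,d\nu\,d\nu>0$, using that the coefficients are positive and decreasing in $\varepsilon$ together with the density of the spaces $V_n$ in $C(\Omega)$. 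Either repair is short, but as stated the inequality is unjustified.
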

\begin{proof}

For $\varepsilon > 0$ and $s\geq0$, let
\begin{equation*}
K_{s, \varepsilon}(x,y) =\begin{cases} \Big( \varepsilon + \frac{1- \cos(2 \vartheta (x,y))}{2} \Big)^{-\frac{s}{2}} & \text{ for } s> 0 \\
- \frac{1}{2}\log \Big( \varepsilon + \frac{1- \cos(2 \vartheta (x,y))}{2} \Big) & \text{ for }s = 0
\end{cases}
\end{equation*} 
and
\begin{equation*}
  F_{s, \varepsilon}(t) = \begin{cases}
    \Big( \varepsilon + \frac{1- t}{2} \Big)^{-\frac{s}{2}} & \text{ for }s> 0 \\
- \frac{1}{2}\log \Big( \varepsilon + \frac{1- t}{2} \Big) & \text{ for }s = 0
\end{cases}.
\end{equation*}

Note that for all $x,y \in \Omega$, $K_{s, \varepsilon}(x,y)$ is strictly decreasing in $\varepsilon$, and $\lim_{\varepsilon \rightarrow 0} K_{s, \varepsilon}(x,y) = K_s(x,y)$. Now, suppose that $\mu \in \mathcal{M}(\Omega)$ (not identically zero) such that $I_{K_s}(\mu)$ is well defined. There exists $\mu^+, \mu^- \in \mathcal{B}(\Omega)$ such that $\mu = \mu^+ - \mu^-$, and so, by the Monotone Convergence Theorem, we have
\begin{align*}
I_{K_s}(\mu) & = I_{K_s}(\mu^+) - 2 \iint\limits_{\Omega \times \Omega} K_s(x,y) d\mu^+(x) d\mu^-(y) + I_{K_s}(\mu^-) \\
& = \lim_{\varepsilon \rightarrow 0} \left(I_{K_{s, \varepsilon}}(\mu^+)
  -2 \iint\limits_{\Omega \times \Omega} K_{s,\varepsilon}(x,y) d\mu^+(x) d\mu^-(y) + I_{K_{s, \varepsilon}}(\mu^-) \right)\\
& = \lim_{\varepsilon \rightarrow 0} I_{K_{s, \varepsilon}}(\mu).
\end{align*}

We now show that $I_{K_{s, \varepsilon}}(\mu)$ is positive and strictly decreasing as a function of $\varepsilon$, for all $s \geq 0$.

For $\varepsilon > 0$,
\begin{equation}\label{eq:RieszEpsilonSeries}
\Big( \varepsilon + \frac{1- t}{2} \Big)^{-\frac{s}{2}} = 
\Big( \varepsilon + 1 \Big)^{-\frac{s}{2}}  \sum_{k=0}^{\infty} \binom{ k+\frac{s}{2}-1}{k}  \Big( \frac{t+1}{2( \varepsilon +1)} \Big)^k
\end{equation}
and
\begin{equation}\label{eq:LogEpsilonSeries}
- \log \Big( \varepsilon + \frac{1- t}{2} \Big) = 
- \log \Big( \varepsilon + 1 \Big) +  \sum_{k=1}^{\infty} \frac1k
\Big( \frac{t+1}{2( \varepsilon +1)} \Big)^k,
\end{equation}
with the series converging uniformly on $[-1,1]$.

The polynomials $(t+1)^k$ can be expressed as linear combinations of
Jacobi polynomials; the coefficients are given by
\begin{multline*}
  \frac{m_n}{P_n^{(\alpha,\beta)}(1)^2}\int_{0}^{\frac{\pi}{2}}(1+\cos(2\theta))^k
  P_n^{(\alpha,\beta)}(\cos(2\theta))\,
    d\nu^{\alpha,\beta}(\theta)\\
    =\frac{m_n}{P_n^{(\alpha,\beta)}(1)^2}\binom kn
  \frac{2^k(\alpha+1)_n(\beta+1)_k}{(\alpha+\beta+2)_{n+k}}.
\end{multline*}
Together with the series expansions \eqref{eq:RieszEpsilonSeries} and
\eqref{eq:LogEpsilonSeries} this shows that $\widehat{F_{s,\varepsilon}}(n)>0$
 for all $s \geq 0, n\in\mathbb{N}_0$, for $\varepsilon$ sufficiently small.

These coefficients are positive, meaning that $K_{s, \varepsilon}$ is strictly positive definite, for $\varepsilon$ sufficiently small, by Theorem \ref{thm:PosDefCoefficients}. Since $\mu \neq 0$, we know (from the density of $\operatorname{span}( \{ Y_{n,k}: n \in \mathbb{N}_0, 1 \leq k \leq \dim(V_n)\})$ in $C(\Omega)$) that for some $m \in \mathbb{N}_0$, we must have
\begin{equation*} \iint\limits_{\Omega\times\Omega} P_m^{(\alpha, \beta)}(\cos(2 \vartheta(x,y))) \,d\mu(x) \,d\mu(y) > 0.\end{equation*}
Since the coefficients $\widehat{F_{s, \varepsilon}}(n)$, for $n \in \mathbb{N}_0$, are strictly decreasing in $\varepsilon$ and positive, the positive definiteness of the Jacobi polynomials and Lemma \ref{lem:UniformConvergenceJacobi} gives us, for $0 \leq s < D$,
\begin{align*}
I_{K_s}(\mu) & = \lim_{\varepsilon \rightarrow 0} I_{K_{s, \varepsilon}}(\mu) \\
&  = \lim_{\varepsilon \rightarrow 0} \sum_{n=0}^{\infty} \widehat{F_{s, \varepsilon}}(n) \iint\limits_{\Omega\times\Omega} P_n^{(\alpha, \beta)}(\cos(2 \vartheta(x,y)))\, d\mu(x)\, d\mu(y) \\
& \geq \lim_{\varepsilon \rightarrow 0} \widehat{F_{s, \varepsilon}}(m) \iint\limits_{\Omega\times\Omega} P_m^{(\alpha, \beta)}(\cos(2 \vartheta(x,y)))\, d\mu(x)\, d\mu(y) \\
& > 0.
\end{align*}
This proves conditional strict positive definiteness.
\end{proof}

\begin{theorem}[see {\cite[Proposition~3.14]{Beltran_Corral_CriadodelRey2019:GreenEnergy}}]\label{thm:GreenPosDef}
  The Green function $G(x,y)$ is conditionally strictly positive definite.
\end{theorem}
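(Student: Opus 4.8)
The plan is to identify the Jacobi--Fourier coefficients of $G$, show they are strictly positive, and then bridge from this spectral positivity to conditional strict positive definiteness in spite of the diagonal singularity of $G$.

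First I would read off, from the formal expansion of $G$ established at the beginning of the proof of Proposition~\ref{prop:GreenKernelFormula} together with \eqref{eq:JacobiExpansion}, that the Jacobi coefficients of $G$ are
\begin{equation*}
  \widehat{G}(0)=0,\qquad
  \widehat{G}(n)=\frac{2n+\alpha+\beta+1}{4(\alpha+\beta+1)\,n(n+\alpha+\beta+1)}
  \frac{(\alpha+\beta+1)_n}{(\beta+1)_n}=\frac{m_n}{\lambda_n\,P_n^{(\alpha,\beta)}(1)}
\end{equation*}
for $n\geq1$. Because $\alpha+\beta+1>0$ and $\beta+1>0$ for every projective space, every factor here is positive, so $\widehat{G}(n)>0$ for all $n\geq1$. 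Were $G$ continuous on $[-1,1]$, Theorem~\ref{thm:PosDefCoefficients}\eqref{pt:c} would finish the proof at once; the whole difficulty is that $G$ is singular as $\theta\to0$, so that theorem does not apply and the formal series diverges pointwise once $\alpha\geq\tfrac12$.

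For $\alpha\in\mathbb{N}_0$ I would exploit the closed form in Proposition~\ref{prop:GreenKernelFormula}, where $G$ is displayed as a linear combination, with \emph{positive} coefficients, of the Riesz kernels $K_{2\ell}$ with $\ell=1,\dots,\alpha$ (so that $2\ell\leq 2\alpha<D$), the logarithmic kernel $K_0$, and a constant; positivity of the coefficients holds since $\binom{\alpha}{\ell}(\ell-1)!>0$ and $(\beta+\alpha+1-\ell)_\ell>0$ for $\ell\leq\alpha$. By Theorem~\ref{thm:PosDefRiesz} each $K_{2\ell}$ and $K_0$ is strictly, hence conditionally strictly, positive definite, while the constant contributes $0$ to $I(\nu)$ whenever $\nu\in\mathcal{Z}(\Omega)$, as such $\nu$ have total mass zero. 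Since the energy functional is linear in the kernel, a positive combination of conditionally strictly positive definite kernels with a conditionally positive definite one is again conditionally strictly positive definite, which settles this case.

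The only remaining case is $\alpha\in\tfrac12+\mathbb{N}_0$, which occurs solely for $\mathbb{RP}^{d-1}$ with $\beta=-\tfrac12$; here the closed form contains the genuinely non-Riesz term $\cos(\theta)\bigl(\tfrac\pi2-\theta\bigr)\rho(x,y)^{-2\ell-1}$, so I would instead mimic the regularization in the proof of Theorem~\ref{thm:PosDefRiesz}. Replacing $\rho(x,y)^2=\tfrac{1-\cos(2\theta)}2$ by $\varepsilon+\tfrac{1-\cos(2\theta)}2$ throughout the explicit formula produces a continuous kernel $G_\varepsilon$ with $G_\varepsilon\uparrow G$ as $\varepsilon\downarrow0$; I would check that each coefficient $\widehat{G_\varepsilon}(n)$ is positive and converges to $\widehat{G}(n)$, so that $G_\varepsilon$ is conditionally strictly positive definite by Theorem~\ref{thm:PosDefCoefficients}. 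Then for $\nu\in\mathcal{Z}(\Omega)$ with $I_G(\nu)$ well defined, writing $\nu=\nu^+-\nu^-$ and applying monotone convergence exactly as in Theorem~\ref{thm:PosDefRiesz} gives $I_G(\nu)=\lim_{\varepsilon\to0}I_{G_\varepsilon}(\nu)$, which is bounded below by $\widehat{G}(m)\iint P_m^{(\alpha,\beta)}(\cos(2\theta))\,d\nu\,d\nu>0$ for any index $m\geq1$ whose harmonic projection of $\nu$ is nonzero; such an $m$ exists because $\nu\neq0$ has total mass zero, so by the addition formula \eqref{eq:addition} and density of the $Y_{n,k}$ some harmonic of positive degree pairs nontrivially with $\nu$. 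The main obstacle throughout is precisely this passage from the merely formal, non-convergent spectral expansion to a bona fide energy inequality, which the monotone regularization is designed to overcome.
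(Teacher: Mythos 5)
Your proof is correct, but it takes a genuinely different route from the paper: for this statement the paper supplies no argument of its own, deferring entirely to \cite[Proposition~3.14]{Beltran_Corral_CriadodelRey2019:GreenEnergy}, where conditional strict positive definiteness of the Green function is proved on an \emph{arbitrary} compact Riemannian manifold by general spectral methods. Your argument is instead self-contained and specific to the projective spaces: for $\alpha\in\mathbb{N}_0$ you correctly read off from Proposition~\ref{prop:GreenKernelFormula} that $G$ is a positive combination of $K_{2\ell}$ with $2\ell\leq 2\alpha<D$, of $K_0$, and of a constant that contributes nothing on $\mathcal{Z}(\Omega)$, so Theorem~\ref{thm:PosDefRiesz} finishes that case (note each cross-energy is separately well defined since all these kernels are bounded below and the pieces are nonnegative); for $\alpha\in\frac12+\mathbb{N}_0$ you rerun the $\varepsilon$-regularization from the proof of Theorem~\ref{thm:PosDefRiesz}. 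The one step you leave at the level of ``I would check'' is also the only step not already in the paper, so let me confirm it goes through: positivity of the Jacobi coefficients of the regularized term $\cos(\theta)\left(\frac\pi2-\theta\right)\left(\varepsilon+\sin(\theta)^2\right)^{-\ell-\frac12}$ follows from the identity $\frac\pi2-\theta=\arcsin(\cos(\theta))$ on $[0,\frac\pi2]$, which gives the expansion
\begin{equation*}
  \cos(\theta)\left(\frac\pi2-\theta\right)
  =\sum_{n=0}^\infty\frac{(\frac12)_n}{n!\,(2n+1)}\cos(\theta)^{2n+2},
\end{equation*}
a power series in $\frac{1+t}2$ with positive coefficients; multiplying by \eqref{eq:RieszEpsilonSeries} (with $s=2\ell+1$) and using the positive Jacobi coefficients of $(1+t)^k$ computed in the proof of Theorem~\ref{thm:PosDefRiesz} yields $\widehat{G_\varepsilon}(n)>0$ for all $n\geq1$, monotone in $\varepsilon$, exactly as your limiting argument requires; the monotonicity $G_\varepsilon\uparrow G$ that you invoke uses the positivity of the second sum's coefficients, which holds since $(\alpha+1-\ell)_\ell/(\alpha+\frac12-\ell)_\ell$ exceeds $(\frac12)_\ell/\ell!$. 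In sum, the paper's citation buys generality (no closed form for $G$ is needed, and the statement holds on any compact manifold), while your route buys a proof carried out entirely within the machinery of Sections~\ref{sec:green-function} and~\ref{sec:MinimizersOfEnergies}, together with the explicit spectral lower bound $I_G(\nu)\geq\widehat{G}(m)\iint P_m^{(\alpha,\beta)}\,d\nu\,d\nu>0$.
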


\begin{proof}[Proof of Theorem \ref{thm:UniformMinEnergies}]

The first part of our claim follows from Theorems \ref{thm:PosDefRiesz} and \ref{thm:GreenPosDef} and Corollary \ref{cor:RotationInvariantMin}. Equations \eqref{eq:MinimalRieszEnergyCont}, \eqref{eq:MinimalLogEnergyCont}, and \eqref{eq:MinimalGreenEnergyCont} follow from direct computation or our assumptions on $G$.

By Theorem \ref{thm:RelateDiscreteContEnergies}, any weak$^*$ limit point of
$\{\nu_{\omega_N}: N \geq 2\}$ must be $\sigma$. We know, by the Banach-Alaoglu
Theorem, that $\mathbb{P}(\Omega)$ is weak$^*$ compact. Thus the sequence
$(\nu_{\omega_N})$ has a limit point, which has to be $\sigma$ by
Theorem~\ref{thm:RelateDiscreteContEnergies}. Thus the sequence
$(\nu_{\omega_N})$ converges to $\sigma$ and the second part of our claim now
follows.
\end{proof}

\subsection{The heat kernel}\label{sec:heat-kernel}
Using the theory of the heat
kernel for a compact Riemannian manifold (see, for example,
\cite{Rosenberg1997:Laplace_Riemannian_Manifolds}) we obtain a lower bound for
the Green function $G(x,y)$ on each of the (compact) projective spaces.

The \emph{heat kernel} on $\Omega$ is the unique function
$H_t(x,y):=H(t,x,y)\in C^{\infty}(\mathbb{R}^{+}\times \Omega\times \Omega)$
satisfying
\begin{align}
\Laplace_x H(t,x,y)+\frac{\partial}{\partial t}H(t,x,y) = 0\label{eq:heat-de}\\
\lim_{t\to 0^{+}}\int_{\Omega} H(t,x,y)f(y)\,d\sigma(y)=f(x)\label{eq:heat-limit}
\end{align}
for each $f\in C^{\infty}(\Omega),$ where $\Laplace_x=\Laplace_\Omega$ is the
Laplace operator in the variable $x$.

Similar to the case of the Green function, the spectral theorem and the
addition formula~\eqref{eq:addition} together imply that $H_t(x,y)$ has a series expansion in terms
of the Jacobi polynomials over $L^2(\Omega,\sigma)$:
\begin{equation*}
  \begin{split}
    H_t(x,y)=&\sum_{n=0}^{\infty}e^{-4n(n+\alpha+\beta+1)t}\frac{2n+\alpha+\beta+1}
    {(\alpha+\beta+1)}\\
    &\times \frac{(\alpha+\beta+1)_n(\alpha+1)_n}{n!(\beta+1)_n}
    P_n^{(\alpha,\beta)}(\cos(2\theta(x,y))).
  \end{split}
\end{equation*}

Contrasting with the formal expansion for $G(x,y)$, the series expansion of
$H_t(x,y)$ is uniformly convergent in $x,y\in \Omega$ for all $t>0$.

Integrating $(1-H_t(x,y))$ with respect to $t$, we arrive at the kernel 
\begin{equation}\label{eq:GreenHeatKernel}
  \begin{split}
    G_t(x,y)=&\sum_{n=1}^{\infty}e^{-4n(n+\alpha+\beta+1)t}
    \frac{2n + \alpha + \beta + 1}{4n(n+\alpha+\beta+1)}\\
    &\times \frac{(\alpha+\beta+2)_{n-1}(\alpha+1)_n}{(\beta+1)_nn!}
    P_n^{(\alpha,\beta)}(\cos(2\theta(x,y))),
  \end{split}
\end{equation}
which we may use to provide an explicit lower bound on the Green function.

\begin{lemma}\label{lem:ElkiesLemma} For all $t>0$ and $x\neq y$ we have 
\begin{equation}\label{eq:ElkiesIneq}
    G(x,y)\geq G_t(x,y)-t.
\end{equation}
\end{lemma}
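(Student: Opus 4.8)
The plan is to realize both the Green function and the kernel $G_t$ as integrals of the (shifted) heat kernel and then to exploit the positivity of the latter. Comparing the series \eqref{eq:GreenHeatKernel} for $G_t$ with the expansion of $H_s$ and integrating term by term in $s$ gives
\begin{equation*}
  G_t(x,y)=\int_t^\infty\bigl(H_s(x,y)-1\bigr)\,ds,
\end{equation*}
since each summand $e^{-\lambda_n s}$ (with $\lambda_n=4n(n+\alpha+\beta+1)$) integrates to $e^{-\lambda_n t}/\lambda_n$ and the constant $n=0$ term of $H_s$ is exactly $1$. Letting $t\to0^+$ recovers the Green function, $G(x,y)=\int_0^\infty\bigl(H_s(x,y)-1\bigr)\,ds$, which is the standard representation of the inverse of $\Laplace_\Omega$ on the mean-zero subspace $\{g:\int_\Omega g\,d\sigma=0\}$. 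Subtracting the two identities yields the exact formula
\begin{equation*}
  G(x,y)-G_t(x,y)=\int_0^t\bigl(H_s(x,y)-1\bigr)\,ds,
\end{equation*}
so the lemma reduces to bounding this integral from below by $-t$.

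The key input is the positivity of the heat kernel: for every $s>0$ and all $x,y\in\Omega$ one has $H_s(x,y)\ge0$. This is a classical consequence of the (strong) minimum principle for the parabolic equation \eqref{eq:heat-de}--\eqref{eq:heat-limit} on a compact manifold, equivalently the fact that the heat semigroup $e^{-s\Laplace_\Omega}$ is positivity preserving (see \cite{Rosenberg1997:Laplace_Riemannian_Manifolds}); it cannot be read off from the Jacobi expansion, whose partial sums oscillate in sign. Granting this, the integrand satisfies $H_s(x,y)-1\ge-1$ pointwise, and therefore
\begin{equation*}
  G(x,y)-G_t(x,y)=\int_0^t\bigl(H_s(x,y)-1\bigr)\,ds\ge\int_0^t(-1)\,ds=-t,
\end{equation*}
which is exactly \eqref{eq:ElkiesIneq}.

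It remains to justify the integral manipulations for $x\ne y$. Here the series for $H_s$ converges uniformly in $(x,y)$ for each fixed $s>0$, so term-by-term integration over any interval $[t,\infty)$ with $t>0$ is legitimate and produces \eqref{eq:GreenHeatKernel}. Near $s=0$ one uses that for $x\ne y$ the initial condition \eqref{eq:heat-limit} forces $H_s(x,y)\to0$, so that $H_s(x,y)-1$ stays bounded and $\int_0^t$ converges; as $s\to\infty$ the bracket decays like $e^{-\lambda_1 s}$, guaranteeing convergence of $\int^\infty$. I expect the only genuine difficulty to be the positivity statement $H_s\ge0$: it is the one ingredient that is invisible to the spectral expansion and must be imported from parabolic PDE theory (the maximum principle) or from the interpretation of $H_s$ as the transition density of Brownian motion on $\Omega$. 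Once positivity is in hand, the estimate is immediate.
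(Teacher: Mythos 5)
Your proof is correct and rests on exactly the same two pillars as the paper's: the positivity of the heat kernel (Theorem~\ref{thm:PosHeatKernel}, imported from the parabolic maximum principle, just as you say) and the identity $G_{t_0}(x,y)-G_{t_1}(x,y)=\int_{t_0}^{t_1}\bigl(H_s(x,y)-1\bigr)\,ds$, after which $H_s-1\ge-1$ gives \eqref{eq:ElkiesIneq}. The difference lies in how the identity and the $t\to0$ endpoint are handled. You derive it by termwise integration of the spectral series (legitimate on $[t,\infty)$, as you note) together with the pointwise representation $G(x,y)=\int_0^\infty\bigl(H_s(x,y)-1\bigr)\,ds$; the paper instead observes $G_t(x,y)=\int_\Omega G(x,z)H_t(z,y)\,d\sigma(z)$, deduces $\Laplace_x G_t=H_t-1$ from the defining property of $G$ together with the heat equation \eqref{eq:heat-de}, hence $\frac{\partial}{\partial t}G_t=1-H_t$, integrates over $[t_0,t_1]$, and only then lets $t_0\to0$, needing nothing beyond $G_{t_0}(x,y)\to G(x,y)$ for $x\neq y$ (which follows from \eqref{eq:heat-limit} plus the integrability and off-diagonal continuity of $G$). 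This ordering matters at your one loose point: your claim that \eqref{eq:heat-limit} ``forces'' $H_s(x,y)\to0$ for $x\neq y$ is not a formal consequence of that weak-$*$ initial condition; it is a genuine (though classical) off-diagonal heat kernel estimate that would have to be imported alongside positivity, and likewise the identification of $\int_0^\infty(H_s-1)\,ds$ with the paper's $G$ deserves a sentence. You can sidestep both issues by mimicking the paper's one-sided limit: positivity gives $G_{t_0}(x,y)-G_t(x,y)=\int_{t_0}^{t}(H_s-1)\,ds\ge-(t-t_0)\ge-t$ for $0<t_0<t$, and then let $t_0\to0$ on the left only; this requires neither convergence of $\int_0^t(H_s-1)\,ds$ nor the full representation of $G$. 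With that small repair (or a citation for off-diagonal heat kernel bounds), your argument is complete and is in substance the Elkies--Lang argument the paper follows.
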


This approximation is originally due to Elkies, and Lang presents a proof in
\cite[Lemma~5.2]{Lang1988:Introduction_to_Arakelov_Theory}. We shall also provide a proof, which uses the non-negativity of the heat kernel. 

\begin{theorem}\label{thm:PosHeatKernel}
For all $x,y\in \Omega$ and $t>0,$
\begin{align*}
    H_t(x,y)\geq0.
\end{align*}
\end{theorem}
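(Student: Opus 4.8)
The plan is to deduce the pointwise non-negativity of $H_t$ from the parabolic minimum principle on the closed manifold $\Omega$, exploiting that $H_t$ is the integral kernel of the heat semigroup. Since $H\in C^\infty(\mathbb{R}^{+}\times\Omega\times\Omega)$ and $\Omega$ is compact, for any fixed $f\in C^\infty(\Omega)$ the function
\[
u(t,x)=\int_\Omega H_t(x,y)\,f(y)\,d\sigma(y)
\]
is smooth on $(0,\infty)\times\Omega$ and one may differentiate under the integral sign. Using the heat equation \eqref{eq:heat-de} this yields $\partial_t u+\Laplace_x u=0$ on $(0,\infty)\times\Omega$, while \eqref{eq:heat-limit} gives $u(t,\cdot)\to f$ as $t\to0^{+}$. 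For smooth $f$ this limit is in fact uniform, as one checks by expanding $f$ in the Laplace eigenbasis and using that the factors $e^{-\lambda_n t}$ are uniformly bounded by $1$.

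Assume now $f\geq0$. For $\varepsilon>0$ set $v=u+\varepsilon t$, so that $\partial_t v+\Laplace_x v=\varepsilon>0$ on $(0,\infty)\times\Omega$. Fix $0<\delta<T$ and suppose $v$ attains its minimum over $[\delta,T]\times\Omega$ at a point $(t_*,x_*)$ with $t_*>\delta$. In the time variable this forces $\partial_t v(t_*,x_*)\leq0$, and in the space variable $x_*$ is a minimum of $v(t_*,\cdot)$ on the closed manifold, so $\mathrm{Hess}\,v\succeq0$ there and hence $\Laplace_x v(t_*,x_*)=-\mathrm{tr}(\mathrm{Hess}\,v)\leq0$. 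Then $\partial_t v+\Laplace_x v\leq0$ at $(t_*,x_*)$, contradicting $\partial_t v+\Laplace_x v=\varepsilon>0$. Therefore the minimum of $v$ over $[\delta,T]\times\Omega$ is attained on the slice $t=\delta$, giving $u(t,x)\geq \min_{\Omega}u(\delta,\cdot)-\varepsilon T$ for $(t,x)\in[\delta,T]\times\Omega$. Letting $\varepsilon\to0$ and then $\delta\to0^{+}$, and using $u(\delta,\cdot)\to f$ uniformly, yields $u(t,x)\geq\min_{\Omega}f\geq0$ for every $t>0$ and $x\in\Omega$.

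Finally, non-negativity of the kernel itself follows from the arbitrariness of $f$ together with the continuity of $H_t$. If $H_t(x_0,y_0)<0$ for some $t>0$ and $x_0,y_0\in\Omega$, then $H_t(x_0,\cdot)$ is negative on a neighbourhood $U$ of $y_0$; choosing $f\in C^\infty(\Omega)$ with $f\geq0$, $f\not\equiv0$, and $\mathrm{supp}\,f\subset U$ would give $u(t,x_0)=\int_\Omega H_t(x_0,y)f(y)\,d\sigma(y)<0$, contradicting the previous paragraph. Hence $H_t(x,y)\geq0$ for all $x,y\in\Omega$ and $t>0$.

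The step I expect to require the most care is the passage to the time boundary $t=0$: the initial condition \eqref{eq:heat-limit} is prescribed only weakly, so to run the minimum principle I must first promote it to uniform convergence $u(\delta,\cdot)\to f$ for smooth $f$ and argue on the slabs $[\delta,T]\times\Omega$ before taking $\delta\to0^{+}$. The differentiation under the integral and the spatial part of the minimum principle are routine on a compact boundaryless manifold.
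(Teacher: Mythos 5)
Your proof is correct and takes essentially the same route as the paper, whose proof is simply a citation of the strong maximum principle for the parabolic operator $\frac{\partial}{\partial t}+\Laplace_x$ (Protter--Weinberger; Lang): your $\varepsilon t$-perturbed minimum principle on slabs $[\delta,T]\times\Omega$, followed by the bump-function duality step, is exactly the standard argument behind that citation, worked out in full. The one point you leave sketchy, the uniform convergence $u(\delta,\cdot)\to f$ for smooth $f$, does hold as claimed: rapid decay of the eigencoefficients of $f$ combined with the polynomial sup-norm bound $\|Y_{k,m}\|_\infty\le\sqrt{m_k}$ coming from the addition formula \eqref{eq:addition} yields an absolutely convergent majorant, so dominated convergence applies.
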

Non-negativity follows from the fact that
$\frac{\partial}{\partial t}+\Laplace_{x}$ is a parabolic differential
operator, thus satisfying a \textit{strong maximum principle} (see
\cite[Chapter~3, Theorem~3]{Protter_Weinberger1984:Maximum_Principle},
\cite[page~152]{Lang1988:Introduction_to_Arakelov_Theory}.)

\begin{proof}[Proof of Lemma~\ref{lem:ElkiesLemma}]
  We observe that
  \begin{equation*}
    G_t(x,y)=\int_{\Omega}G(x,z)H_t(z,y)\,d\sigma(z).
  \end{equation*}
 This function is defined for all $(x,y)\in\Omega^2$ for $t>0$ by the
 integrability of $G(x,y)$. Furthermore, for all $x\neq y$ we have
 \begin{align*}
   \Laplace_x G_t(x,y)+\frac\partial{\partial t}G_t(x,y)&=0\\
   \lim_{t\to0}G_t(x,y)&=G(x,y)
 \end{align*}
 by \eqref{eq:heat-de}, \eqref{eq:heat-limit}, the integrability of $G$, and the continuity of $G$
 for $x\neq y$. 

 Then, by the defining property of $G$, we have
 \begin{align*}
   \Laplace_x G_t(x,y)=H_t(x,y)-1.
 \end{align*}
 From this, and Theorem \ref{thm:PosHeatKernel}, we derive, for $t_1>t_0>0$,
 \begin{equation*}
   -(t_1-t_0)\leq
   \int_{t_0}^{t_1}\left(H_t(x,y)-1\right)\,dt=
   G_{t_0}(x,y)-G_{t_1}(x,y).
 \end{equation*}
 Taking the limit $t_0\to0$ and using \eqref{eq:heat-limit}, we obtain
 \eqref{eq:ElkiesIneq}.
\end{proof}

In Section \ref{sec:lower-bounds-green}, we use this result to obtain
lower estimates for the Green energy on each of the projective spaces.


\section{Rotation invariant determinantal point
  processes on two-point  homogeneous spaces}
\label{sec:rotat-invar-determ}

We denote as $\X$ a (simple) random point process in the space
$\Omega$. To describe the process we specify the random variable
$\X(F)$ counting the number of points of the process in $F$, for all
Borel sets $F \subset \Omega$. A process is called \emph{simple}, if for
any $p\in\Omega$ we have $\X(\{p\})\leq1$, almost surely.

The joint intensities $\rho(x_{1},\ldots, x_{k})$ are functions defined in
$\Omega$ such that for any family of mutually disjoint subsets
$F_1, \ldots, F_{k} \subset\Omega$
\begin{equation*}
  \mathbb{E}[\X(F_1)\cdots\X(F_k)]=
  \idotsint\limits_{F_{1}\times \cdots \times F_{k}} \rho(x_{1},\ldots, x_{k})\,
  d\sigma(x_{1})\cdots \,d\sigma(x_{k}),
\end{equation*}
and we assume that $\rho(x_{1},\ldots, x_{k})=0$, when $x_{i}=x_{j}$ for
$i\neq j$.

\begin{definition}
  A random point process (see, e.g.,
  \cite[Chapter~4]{KrishnapurPeresBenHoughVirag_Zeros}) is called \emph{determinantal}
  with kernel $\K: \Omega \times \Omega \rightarrow \mathbb{C} $ if it is
  simple and the joint intensities with respect to a background measure
  $\sigma$ are given by
\begin{align*}
\rho(x_{1},\ldots, x_{k})=\det(\K(x_{i},x_{j}))_{1\leq i,j\leq k},
\end{align*} 
for every $k\geq 1$ and $x_{1},\ldots, x_{k}\in  \Omega $. 
 \end{definition}

 In \cite{KrishnapurPeresBenHoughVirag_Zeros}, it is shown that a determinantal
 process samples exactly $N$ points if and only if it is associated to the
 projection of $L^{2}(\Omega, \sigma)$ to an $N$-dimensional subspace $H$. Let
 $\phi_{1},\ldots,\phi_{N}$ be an orthornormal basis of $H$, then the
 projection kernel is given by
 \begin{equation*}
\K_{H}(x,y) = \sum\limits_{k=1}^{N}
\phi_{k}(x)\overline{\phi_{k}(y)}.
\end{equation*}

By the Macchi--Soshnikov theorem (see, e.g., \cite[Theorem
4.5.5]{KrishnapurPeresBenHoughVirag_Zeros}) the projection kernel $\K$ defines
a determinantal point process.
 
By Proposition~\ref{prop:finite-invariant} the only finite-dimensional
$G$-invariant subspaces of $L^2(\Omega,\sigma)$ are finite
orthogonal sums of eigenspaces of $\Laplace_{\Omega}$. Thus it is natural to consider
the subspace
\begin{equation*}
  H=V_0\oplus\cdots \oplus V_k
\end{equation*}
and the corresponding projection kernel given by
\begin{align*}
  \K_n^{(\alpha,\beta)}(x,y)=
  \sum_{k=0}^{n}
   \sum_{m=1}^{m_k} Y_{k, m}(x) Y_{k,m}(y), \ \ \   x, 
   y\in \Omega .
 \end{align*}
 This defines a $G$-invariant determinantal point process.

Using the addition formula~\eqref{eq:addition} and \eqref{eq:Jacobi-sum} the
kernel $\K_n^{(\alpha,\beta)}(x,y)$ can be written in the form
\begin{equation}\label{eq:kernel}
  \begin{split}
    \K_n^{(\alpha,\beta)}(x,y)=&
    \sum_{k=0}^{n} \frac{m_k}{P_k^{(\alpha, \beta)}(1)}
    P_k^{(\alpha, \beta)}(\cos(2\vartheta(x,y)) \\
    =&\frac{(\alpha + \beta + 2)_n}{(\beta + 1)_n}
    P_n^{(\alpha + 1, \beta)}(\cos(2\vartheta(x,y)),\quad
    x,y\in \Omega.
  \end{split}
\end{equation}
Then, for these kernels we have that
  \begin{equation}\label{trace}
    \begin{split}
      N&=\tr(\K_n^{(\alpha,\beta)(x,x)})=\int\limits_{\Omega}
      \K_n^{(\alpha,\beta)}(x,x)\, d\sigma(x)=\K_n^{(\alpha,\beta)}(1)\\
      &=\sum_{k=0}^{n} m_k =\frac{(\alpha+\beta+2)_n(\alpha+2)_n}{(\beta+1)_nn!} \sim
  \frac{\Gamma(\beta+1)}{\Gamma(\alpha+\beta+2)}\frac{n^{2\alpha+2}}{\Gamma(\alpha+2)},
    \end{split}
\end{equation} 
which by the Macchi--Soshnikov theorem is the number of points sampled by the
determinantal process associated to the kernel $\K_n^{(\alpha,\beta)}$, almost
surely. We shall call these determinantal point processes \emph{harmonic
  ensembles}.

Determinantal point processes are very convenient probabilistic models for the
study of energy expressions due to the following theorem
(see e.g., 
\cite[Equation (1.2.2)]{KrishnapurPeresBenHoughVirag_Zeros}).
\begin{proposition}\label{propositionDeterminantalProcess}
  Let $\K(x,y)$ be a projection kernel with trace $N$ in $\Omega$, and let
  $\omega_{N}=\{x_{1},\ldots, x_{N} \} \subset \Omega$ be $N$ random points generated
  by the corresponding determinantal point process $\X$. Then, for any
  measurable $f: \Omega\times\Omega\rightarrow[0,\infty)$, we have
\begin{equation*}
  \begin{split}
    \mathbb{E}_{\X_{N}}
    &\left(\sum\limits_{k\neq j}f(x_{k}, x_{j})\right)  \\
    =&\iint\limits_{\Omega\times\Omega}\left(\K(x,x)
      \K(y,y)-|\K(x,y)|^{2} \right)f(x,y)\,d\sigma(x)\,d\sigma(y).
  \end{split}
\end{equation*}
\end{proposition}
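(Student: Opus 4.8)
The plan is to exploit the explicit law of the harmonic ensemble rather than arguing directly from the joint intensities. Since $\K$ is a projection kernel with $\tr\K=N$, the Macchi--Soshnikov theorem gives that $\X$ samples exactly $N$ points almost surely, and as a projection determinantal process its points carry the symmetric joint density $p(x_1,\dots,x_N)=\frac1{N!}\det\bigl(\K(x_i,x_j)\bigr)_{1\le i,j\le N}$ with respect to $\sigma^{\otimes N}$ (see \cite{KrishnapurPeresBenHoughVirag_Zeros}). Thus for measurable $f\ge0$,
\begin{equation*}
  \mathbb{E}_{\X_N}\Bigl(\sum_{k\neq j}f(x_k,x_j)\Bigr)
  =\frac1{N!}\int_{\Omega^N}\Bigl(\sum_{k\neq j}f(x_k,x_j)\Bigr)
  \det\bigl(\K(x_i,x_j)\bigr)_{1\le i,j\le N}\,d\sigma(x_1)\cdots d\sigma(x_N).
\end{equation*}
As $p$ is invariant under permuting its arguments, each of the $N(N-1)$ ordered pairs $(k,j)$, $k\neq j$, contributes equally, so the right-hand side equals $\frac{N(N-1)}{N!}\int_{\Omega^N}f(x_1,x_2)\det\bigl(\K(x_i,x_j)\bigr)_{1\le i,j\le N}\,d\sigma(x_1)\cdots d\sigma(x_N)$.

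Next I would integrate out $x_3,\dots,x_N$. The two facts driving this are that a projection kernel is idempotent, $\int_\Omega\K(x,z)\K(z,y)\,d\sigma(z)=\K(x,y)$, and that $\int_\Omega\K(x,x)\,d\sigma(x)=\tr\K=N$. Together they give the standard integrating-out identity: for every $2\le m\le N$,
\begin{equation*}
  \int_\Omega\det\bigl(\K(x_i,x_j)\bigr)_{1\le i,j\le m}\,d\sigma(x_m)
  =(N-m+1)\,\det\bigl(\K(x_i,x_j)\bigr)_{1\le i,j\le m-1}.
\end{equation*}
Applying it successively for $m=N,N-1,\dots,3$ produces the telescoping product $1\cdot2\cdots(N-2)=(N-2)!$ and reduces the $N$-fold determinant to a $2\times2$ one,
\begin{equation*}
  \int_{\Omega^{N-2}}\det\bigl(\K(x_i,x_j)\bigr)_{1\le i,j\le N}
  \,d\sigma(x_3)\cdots d\sigma(x_N)
  =(N-2)!\,\det\bigl(\K(x_i,x_j)\bigr)_{1\le i,j\le 2}.
\end{equation*}
Substituting and using $\tfrac{N(N-1)(N-2)!}{N!}=1$ collapses the expectation to $\iint_{\Omega\times\Omega}f(x,y)\det\bigl(\K(x_i,x_j)\bigr)_{1\le i,j\le2}\,d\sigma(x)\,d\sigma(y)$.

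It then remains to identify this $2\times2$ determinant with the claimed integrand. Expanding,
\begin{equation*}
  \det\begin{pmatrix}\K(x,x)&\K(x,y)\\ \K(y,x)&\K(y,y)\end{pmatrix}
  =\K(x,x)\K(y,y)-\K(x,y)\K(y,x),
\end{equation*}
and since $\K(x,y)=\sum_{k=1}^N\phi_k(x)\overline{\phi_k(y)}$ is Hermitian we have $\K(y,x)=\overline{\K(x,y)}$, so $\K(x,y)\K(y,x)=|\K(x,y)|^2$, yielding exactly $\K(x,x)\K(y,y)-|\K(x,y)|^2$. The hypothesis $f\ge0$ together with Tonelli's theorem legitimizes every interchange of the finite sum with the integrals, and the almost-sure finiteness of the point count makes $\sum_{k\neq j}$ a finite sum throughout.

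I expect the integrating-out identity to be the only real obstacle. Its proof expands $\det\bigl(\K(x_i,x_j)\bigr)_{1\le i,j\le m}$ by the Leibniz formula and integrates $x_m$ over $\Omega$. A permutation fixing $m$ contributes $\int_\Omega\K(x_m,x_m)\,d\sigma=N$ times the term of the $(m-1)\times(m-1)$ determinant indexed by its restriction; a permutation moving $m$ has $x_m$ in exactly two factors $\K(x_a,x_m)\K(x_m,x_b)$, which idempotence collapses to $\K(x_a,x_b)$, splicing $m$ out of its cycle and flipping the sign. Since there are $m-1$ ways to splice $m$ into a given reduced permutation, these contribute $-(m-1)$ times the reduced determinant, and adding the two cases gives the coefficient $N-(m-1)=N-m+1$. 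The remaining bookkeeping of signs is routine.
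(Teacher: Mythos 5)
Your proof is correct, but it follows a genuinely different route from the paper, which gives no argument at all: it simply invokes the standard fact (cited as Equation (1.2.2) of the Hough--Krishnapur--Peres--Vir\'ag book) that for a determinantal process the expectation of a pair statistic $\sum_{k\neq j}f(x_k,x_j)$ equals $\iint f\,\rho_2\,d\sigma\,d\sigma$, where the two-point intensity is $\rho_2(x,y)=\det\bigl(\K(x_i,x_j)\bigr)_{1\le i,j\le 2}=\K(x,x)\K(y,y)-|\K(x,y)|^2$ by the determinantal definition. You instead work from the exact $N$-point density $\frac1{N!}\det\bigl(\K(x_i,x_j)\bigr)_{1\le i,j\le N}$ of the projection process, exploit exchangeability to reduce to one ordered pair, and integrate out $x_3,\dots,x_N$ via the idempotence identity $\int\det_m\,d\sigma(x_m)=(N-m+1)\det_{m-1}$, whose Leibniz-expansion proof (the $\pi(m)=m$ terms giving $\tr\K=N$, the cycle-splicing terms giving $-(m-1)$) you sketch correctly, including the telescoping factor $(N-2)!$ and the Hermitian symmetry $\K(y,x)=\overline{\K(x,y)}$. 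What your approach buys is self-containedness and a clean treatment of arbitrary measurable $f\ge0$ by Tonelli: the definition of joint intensities pins down $\rho_2$ only against products of disjoint Borel sets, so the paper's cited route implicitly requires a monotone-class extension (plus simplicity of the process to neglect the diagonal), whereas your density computation gives the formula for all nonnegative $f$ directly. The only circularity to be aware of is mild: the normalization of the density $\frac1{N!}\det_N$ itself rests on the same integrating-out lemma (applied down to $m=1$), but since you cite the reference for the density fact and prove the lemma independently, the argument is sound.
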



\section{Bounds for the Green, Riesz, and Logarithmic Energies on
  Projective Spaces}
\label{sec:bounds-green-riesz}

One often studies random configurations of points to find upper estimates for
the minimal energy. However, no local repulsion occurs between i.i.d. random
points, meaning that sampled points may concentrate near one another, and so
the expected discrete energy, $N(N-1)I_K(\mu)$, is too coarse of an estimate
for a good upper bound. One can prevent this clumping of sampled points by
distributing one point in each part of a partition of $\Omega$ (i.e. jittered
sampling). Alternatively, one can also generate random point sets with local
repulsion built in, using determinantal point processes. Recently,
determinantal point processes have been used to find bounds on energies in
various symmetric spaces (see,
e.g. \cite{Beltran_Etayo2018:projective_ensemble_distribution,
  Beltran_Ferizovic2020:approximation_to_uniform,
  Beltran_Marzo_Ortega-Cerda2016:energy_discrepancy_rotationally,
  Alishahi_Zamani2015:spherical_ensemble_uniform,
  Beltran_Delgado_L.+2021:gegenbauer_point_processes,
  Beltran_Etayo2019:generalization_spherical_ensemble, Hirao2021:finite_frames,
  Marzo_Ortega-Cerda2018:expected_riesz_energy}

Here, we use both jittered sampling and the determinantal point processes
introduced in Section~\ref{sec:rotat-invar-determ} to compute the expectations
of the discrete Riesz, Green, and logarithmic energies under these
models. These, of course, provide upper bounds for the minimal energies. For
the lower bounds of these energies, we use linear programming.  As in the case
of the sphere, the next-order term in the upper and lower bounds obtained by
these ideas have the same orders of magnitude in terms of the number of points
$N$.

\subsection{Upper bounds using jittered sampling}
\label{sec:upper-bounds-jittered}

Since each projective space is a connected Ahlfors regular metric measure space
with finite measure, we may use the following (formulated for our context):
\begin{proposition}[\!\!{\cite[Theorem~2]{Gigante_Leopardi2017:diameter_bounded_equal}}]\label{prop: Projective Partition}
  For each projective space $\Omega$, there exist positive constants $c_1$ and
  $c_2$ such that for all $N$ sufficiently large, there is a partition of
  $\Omega$ into $N$ regions each of measure $\frac{1}{N}$, contained in a
  geodesic ball of radius $c_1 N^{-\frac{1}{D}}$, and containing a
  geodesic ball of radius $c_2 N^{-\frac{1}{D}}$.
\end{proposition}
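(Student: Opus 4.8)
The plan is to verify that each projective space $\Omega$, equipped with the geodesic metric $\vartheta$ and the normalized measure $\sigma$, is a connected, compact, Ahlfors $D$-regular metric measure space of finite total mass, and then to invoke \cite[Theorem~2]{Gigante_Leopardi2017:diameter_bounded_equal} directly; since the proposition is quoted from that reference, the content of the proof is precisely the hypothesis check. Connectedness and compactness are immediate from the classification recalled in Section~\ref{sec:class-two-point}, the total measure is finite because $\sigma(\Omega)=1$, and the diameter equals $\frac\pi2$ (recall that we have fixed $\kappa=1$). The only nontrivial hypothesis is Ahlfors $D$-regularity, with $D=\dim(\Omega)=2\alpha+2$.

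The main step is therefore to produce positive constants $c,C$ with $c\,r^D\leq\sigma(B(x,r))\leq C\,r^D$ for every $x\in\Omega$ and every $0<r\leq\frac\pi2$. By two-point homogeneity the ball volume $v(r):=\sigma(B(x,r))$ does not depend on $x$, and by the area formula \eqref{eq:AreaOfSphere} with $\kappa=1$ we may write $v(r)=\int_0^r A(s)\,ds$, where the density $A(s)$ is proportional to $\sin^{2\alpha+1}(s)\cos^{2\beta+1}(s)$. As $s\to0$ the integrand is asymptotic to a constant times $s^{2\alpha+1}$, so that $v(r)\sim c'\,r^{2\alpha+2}=c'\,r^{D}$ as $r\to0$ for a suitable $c'>0$. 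Since $v$ is continuous and strictly increasing on $[0,\frac\pi2]$ with $v(\frac\pi2)=1$, the quotient $v(r)/r^{D}$ is continuous and strictly positive on $(0,\frac\pi2]$ and has a positive finite limit at $0$; hence it is bounded above and below by positive constants on the whole interval. This yields Ahlfors $D$-regularity.

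With all hypotheses verified, \cite[Theorem~2]{Gigante_Leopardi2017:diameter_bounded_equal} supplies, for all sufficiently large $N$, a partition of $\Omega$ into $N$ regions of equal measure $\frac1N$, each contained in a geodesic ball of radius $c_1 N^{-1/D}$ and containing a geodesic ball of radius $c_2 N^{-1/D}$, which is exactly the assertion. The only delicate point I anticipate is ensuring that the regularity estimate holds uniformly all the way up to the diameter, including near the cut locus where the density $A(s)$ degenerates as $\cos^{2\beta+1}(s)\to0$; but this causes no trouble because $v(r)$ is the monotone integral of $A$ and remains comparable to $r^{D}$ throughout, so the degeneracy of the boundary sphere never enters. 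The substantive geometric content—the existence of diameter-bounded equal-measure partitions on Ahlfors regular spaces—is provided entirely by Gigante and Leopardi, and the obstacle on our side is only this uniform comparability check.
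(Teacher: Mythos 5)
Your proposal is correct and takes essentially the same approach as the paper, which offers no separate proof at all: it simply asserts that each projective space is a connected Ahlfors regular metric measure space with finite measure and invokes \cite[Theorem~2]{Gigante_Leopardi2017:diameter_bounded_equal}. Your verification of Ahlfors $D$-regularity via the ball-volume function $v(r)=\int_0^r A(s)\,ds$ from \eqref{eq:AreaOfSphere} (two-point homogeneity giving independence of the center, the limit $v(r)/r^{D}\to c'>0$ at $r=0$, and compactness of $[0,\frac{\pi}{2}]$ handling the degeneracy of $A$ at the cut locus) correctly fills in the one hypothesis the paper leaves implicit.
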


\begin{proposition}
  For the projective space $\Omega$ and $0 \leq s <D$, there is some
  positive constant $c_{\Omega,s}$ such that for $N \in \mathbb{N}$
  sufficiently large,
\begin{equation*}
\mathcal{E}_{K_s}(N)  \leq \begin{cases}
N^2 I_{K_s}(\sigma) - c_{\Omega, s} N^{1 + \frac{s}{D}} & \text{ for }s > 0 \\
N^2 I_{K_s}(\sigma) - c_{\Omega, s} N \log(N) & \text{ for }s = 0
\end{cases}.
\end{equation*}
\end{proposition}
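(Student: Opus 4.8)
The plan is to bound the minimal energy from above by the \emph{expected} energy of a jittered-sampling configuration built on the equal-area partition of Proposition~\ref{prop: Projective Partition}. First I would fix, for $N$ large, a partition $\Omega = R_1 \cup \cdots \cup R_N$ into regions with $\sigma(R_i) = \frac1N$, each contained in a geodesic ball of radius $c_1 N^{-1/D}$; only this diameter (upper-radius) bound is needed in this direction, and the inscribed-ball condition plays no role. I then sample one point $x_i$ in each $R_i$ independently according to the normalized restriction $N\,d\sigma|_{R_i}$, and set $\omega_N = \{x_1,\dots,x_N\}$. Since $\mathcal{E}_{K_s}(N)$ is at most the expected energy of \emph{any} random configuration, it suffices to estimate $\mathbb{E}[E_{K_s}(\omega_N)]$.

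The key step is an exact splitting. By independence of $x_i$ and $x_j$ for $i\neq j$,
\begin{align*}
  \mathbb{E}[E_{K_s}(\omega_N)]
  &= \sum_{i \neq j} N^2 \int_{R_i}\!\int_{R_j} K_s(x,y)\,d\sigma(x)\,d\sigma(y) \\
  &= N^2 I_{K_s}(\sigma) - N^2 \sum_{i=1}^N \int_{R_i}\!\int_{R_i} K_s(x,y)\,d\sigma(x)\,d\sigma(y),
\end{align*}
where I have completed the double sum to $\iint_{\Omega\times\Omega} K_s\,d\sigma\,d\sigma = I_{K_s}(\sigma)$ and subtracted the diagonal blocks. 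This is legitimate because $I_{K_s}(\sigma)$ is finite for $0 \le s < D$ by \eqref{eq:MinimalRieszEnergyCont}. Thus the entire problem reduces to a \emph{lower} bound on the within-region contributions $\sum_i \int_{R_i}\int_{R_i} K_s$.

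For that I would use the diameter control: if $x,y \in R_i$ then $\theta(x,y) \le 2c_1 N^{-1/D}$, so $\rho(x,y) = \sin(\theta(x,y)) \le \theta(x,y) \le 2c_1 N^{-1/D}$. Hence for $s>0$ we get the pointwise bound $K_s(x,y) = \rho(x,y)^{-s} \ge (2c_1)^{-s} N^{s/D}$ on each $R_i \times R_i$, and integrating against $\sigma(R_i)^2 = N^{-2}$ and summing over the $N$ regions yields $\sum_i \int_{R_i}\int_{R_i} K_s \ge (2c_1)^{-s} N^{s/D-1}$. Substituting back gives $\mathbb{E}[E_{K_s}(\omega_N)] \le N^2 I_{K_s}(\sigma) - (2c_1)^{-s} N^{1+s/D}$, so $c_{\Omega,s} = (2c_1)^{-s}$ works. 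For $s=0$ the same diameter bound gives $K_0(x,y) = -\log\rho(x,y) \ge \tfrac1D \log N - \log(2c_1)$, whence $\sum_i \int_{R_i}\int_{R_i} K_0 \ge N^{-1}\bigl(\tfrac1D \log N - \log(2c_1)\bigr)$ and $\mathbb{E}[E_{K_0}(\omega_N)] \le N^2 I_{K_0}(\sigma) - N\bigl(\tfrac1D \log N - \log(2c_1)\bigr)$; for $N$ large the $\tfrac1D N\log N$ term dominates, giving $-c_{\Omega,0} N\log N$ with, say, $c_{\Omega,0} = \tfrac{1}{2D}$.

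The argument is otherwise routine: the only points demanding care are the finiteness of $I_{K_s}(\sigma)$ for $s<D$ (so the off-diagonal expectation is finite and the splitting is valid) and the geodesic-to-chordal comparison $\rho \le \theta$ together with the factor of two from the diameter. I expect the ``main obstacle'' to be mere bookkeeping---tracking the dependence of $c_{\Omega,s}$ on $c_1$ and $s$ and the behavior as $s \to 0$---rather than any genuine analytic difficulty. I should stress that this method recovers only the correct \emph{order} $N^{1+s/D}$ (respectively $N\log N$) of the next-order term, not a sharp constant, which is precisely why the explicit optimal constants are obtained later through the harmonic-ensemble computation.
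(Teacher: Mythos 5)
Your proposal is correct and follows essentially the same route as the paper's own proof: jittered sampling on the Gigante--Leopardi equal-measure partition, the identical complete-the-sum splitting of the expected energy into $N^2 I_{K_s}(\sigma)$ minus the diagonal blocks, and the same diameter bound on each region to lower-bound the within-region contribution by $(2c_1)^{-s}N^{s/D}$ (respectively $\tfrac1D\log N-\log(2c_1)$ for $s=0$). The only cosmetic difference is that the paper first converts the geodesic balls to chordal balls using $\sin(\vartheta)<\vartheta<2\sin(\vartheta)$, whereas you apply the comparison $\rho\le\vartheta$ directly.
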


\begin{proof}
  Since $\sin ( \vartheta ) < \vartheta < 2 \sin( \vartheta)$ for $\vartheta$
  sufficiently small, Proposition \ref{prop: Projective Partition} gives us
  that there exists some positive constant $c_{3}$ such that for $N$
  sufficiently large, there is a partition of $\Omega$ into $N$ regions, $D_1$,
  \ldots, $D_N$, each of measure $\frac{1}{N}$ and contained in a chordal ball
  of radius $c_3 N^{- \frac{1}{2 \alpha + 2}}$.

  Letting $ d \sigma_j(x) := N \mathbf{1}_{D_j} d \sigma(x)$, we have for
  $0 < s < D$,
\begin{align*}
&\mathcal{E}_{K_s}(N)  \leq \int_{\Omega} \cdots \int_{\Omega} \sum_{i \neq j} K_s( z_i, z_j)\,d \sigma_1(z_1) \cdots d \sigma_N(z_N) \\
& = N^2 \sum_{i \neq j} ~\iint\limits_{D_i\times D_j} K_s( z_i, z_j) \,d \sigma(z_i)\, d \sigma(z_j) \\
& = N^2 \Bigg(~\iint\limits_{\Omega\times\Omega} K_s (x,y) \,d \sigma(x)\, d \sigma(y) - \sum_{j=1}^{N} ~\iint\limits_{D_j\times D_j} K_s( x, y) \,d \sigma_j(x)\,
d \sigma_j(y)\Bigg)\\
& \leq N^2 I_{K_s}(\sigma) - \sum_{j=1}^{N} \frac{1}{\operatorname{diam}(D_j)^s} 
 \leq N^2 I_{K_s}(\sigma) - \frac{1}{(2 c_3)^{s}} N^{1 + \frac{s}{D}} .
\end{align*}
The logarithmic case works similarly. 
\end{proof}

We immediately have the following corollary

\begin{corollary}
For the projective space $\Omega$ there is some positive constant $c_{\Omega,G}$ such that for $N \in \mathbb{N}$ sufficiently large,
\begin{equation*}
\mathcal{E}_{G}(N)  \leq \begin{cases}  
- c_{\Omega, G} N \log(N) & \text{ for }\Omega = \mathbb{RP}^2\\
- c_{\Omega, G} N^{2- \frac{2}{D}} & \text{ for }\Omega \neq \mathbb{RP}^2
\end{cases}.
\end{equation*}
\end{corollary}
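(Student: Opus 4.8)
The plan is to rerun the jittered-sampling argument of the preceding proposition with the Green kernel $G$ in place of the Riesz kernel $K_s$. Concretely, I would take the same equal-area partition $D_1,\dots,D_N$ of $\Omega$ supplied by Proposition~\ref{prop: Projective Partition}, set $d\sigma_j=N\mathbf{1}_{D_j}\,d\sigma$, and bound $\mathcal E_G(N)$ above by the expected energy of one point dropped uniformly into each cell. Exactly as before this yields
\begin{equation*}
  \mathcal E_G(N)\le N^2\Bigl(I_G(\sigma)-\sum_{j=1}^N\iint_{D_j\times D_j}G(x,y)\,d\sigma(x)\,d\sigma(y)\Bigr).
\end{equation*}
Two features of the Green kernel make the outcome differ from the Riesz case. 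First, by \eqref{eq:MinimalGreenEnergyCont} the leading term vanishes, $I_G(\sigma)=0$, so no $N^2$ term survives; this is exactly why the stated bounds are purely negative. Second, the diagonal contribution is governed solely by the short-range behaviour of $G$, which by \eqref{eq:Green-asymp} matches that of a Riesz kernel of exponent $2\alpha=D-2$.

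For the diagonal term I would invoke the asymptotic expansions of Section~\ref{sec:green-function}. When $\alpha>0$ (that is, $\Omega\neq\mathbb{RP}^2$), \eqref{eq:Green-asymp} furnishes a positive constant $c_\alpha$ and a radius $\rho_0>0$ with $G(x,y)\ge c_\alpha\,\rho(x,y)^{-2\alpha}$ whenever $\rho(x,y)\le\rho_0$. Since each $D_j$ lies in a chordal ball of radius $c_3N^{-1/D}$ (the constant from the preceding proof), for $N$ large every pair $x,y\in D_j$ satisfies $\rho(x,y)\le\operatorname{diam}(D_j)\le 2c_3N^{-1/D}\le\rho_0$, whence $G(x,y)\ge c_\alpha\operatorname{diam}(D_j)^{-2\alpha}$. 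Multiplying by $\sigma(D_j)^2=N^{-2}$, summing over $j$, and using $2\alpha/D=1-2/D$ gives
\begin{equation*}
  N^2\sum_{j=1}^N\iint_{D_j\times D_j}G\,d\sigma\,d\sigma
  \ge c_\alpha\sum_{j=1}^N\operatorname{diam}(D_j)^{-2\alpha}
  \ge c_\alpha(2c_3)^{-2\alpha}N^{2-\frac{2}{D}},
\end{equation*}
which is the claimed order. For $\mathbb{RP}^2$ one instead uses \eqref{eq:RP2Green-asymp}, so that $G(x,y)\ge-\log\rho(x,y)-C\ge-\log\operatorname{diam}(D_j)-C$ on each cell; with $\operatorname{diam}(D_j)\le 2c_3N^{-1/2}$ this produces $\sum_j\bigl(-\log\operatorname{diam}(D_j)-C\bigr)\gtrsim N\log N$, giving the logarithmic bound.

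The only genuine obstacle is to ensure that the short-range lower bound for $G$ is uniform and really dominates its own error. Because the error term in \eqref{eq:Green-asymp} is $\mathcal O(\rho^{-(2\alpha-1)})$, one order below the main term, there is a fixed $\rho_0$ below which the main term controls it, and this is precisely the reason the conclusion requires $N$ large enough that the cell diameters fall below $\rho_0$. I would also record that $G$ is continuous off the diagonal, tends to $+\infty$ on it, and is integrable (its singularity is $\rho^{-(D-2)}$ with $D-2<D$), so it is bounded below on the compact space $\Omega$ and the jittered-sampling expectation is well defined and dominates $\mathcal E_G(N)$. These are the same structural hypotheses already used in the preceding proposition, so nothing beyond the asymptotics of Section~\ref{sec:green-function} and the identity $I_G(\sigma)=0$ is needed, which is why the result follows immediately.
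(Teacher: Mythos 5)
Your proof is correct and is essentially the paper's own argument: the corollary is stated there as an immediate consequence of the jittered-sampling proposition, precisely because $I_G(\sigma)=0$ and the Green function dominates a Riesz kernel of exponent $D-2$ (resp.\ the logarithmic kernel on $\mathbb{RP}^2$) at short range by \eqref{eq:Green-asymp} and \eqref{eq:RP2Green-asymp}. Your write-up merely makes explicit the uniform short-range lower bound, the $\sigma(D_j)^2=N^{-2}$ bookkeeping, and the requirement that $N$ be large enough for the cell diameters to fall below $\rho_0$ --- details the paper leaves implicit.
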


\subsection{Upper bounds using determinantal point processes}
\label{sec:upper-bounds-riesz}
In this section we study the expectation of $E_{K_s}$ and $E_G$ under the harmonic
ensemble given by the projection kernel \eqref{eq:kernel}. For the Riesz and logarithmic energies, this amounts to the
computation of integrals of the form
\begin{equation*}
  \mathbb{E}_{\X_N} \big[ E_{K_s} \big]=\iint\limits_{\Omega\times\Omega}
  \frac{\K_n^{(\alpha,\beta)}(1)^2-\K_n^{(\alpha,\beta)}(\cos(2\theta(x,y)))^2}
  {\sin(\theta(x,y))^s}
  \,d\sigma(x)\,d\sigma(y).
\end{equation*}
Using \eqref{eq:JacobiProjection} this simplifies to
\begin{equation*}
  \frac{(\alpha+\beta+2)_n^2}{(\beta+1)_n^2}\int_{0}^{\frac\pi2}
  \frac1{\sin(\theta)^s}
  \left(P_n^{(\alpha+1,\beta)}(1)^2-P_n^{(\alpha+1,\beta)}(\cos(2\theta))^2\right)
  \,d\nu^{\alpha,\beta}(\theta).
\end{equation*}
We first observe that the integral can be written as
\begin{equation*}
  \frac1{\gamma^{\alpha,\beta}}\int_0^{\frac\pi2}
  \frac{P_n^{(\alpha+1,\beta)}(1)^2-P_n^{(\alpha+1,\beta)}(\cos(2\theta))^2}
  {\sin(\theta)^2}\sin(\theta)^{2\alpha+3-s}\cos(\theta)^{2\beta+1}\,d\theta.
\end{equation*}
Now the limit of the quotient exists for $\theta\to0$, which shows that the
integral converges if $2\alpha+3-s>-1$. This gives
\begin{theorem}\label{thm:Determinantal Riesz Energy Finite}
  The expected Riesz $s$-energy $\mathbb{E}_{\X_N}\big[ E_{K_s} \big]$ is
  finite if and only if $s <D+2$.
\end{theorem}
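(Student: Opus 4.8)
The plan is to read off finiteness directly from the explicit one-dimensional integral derived above. First I would record that the integrand is non-negative: by the Cauchy--Schwarz inequality for the projection kernel $\K_n^{(\alpha,\beta)}$ one has $|\K_n^{(\alpha,\beta)}(\cos(2\theta))|\le \K_n^{(\alpha,\beta)}(1)$, so after dividing out the positive constant $(\alpha+\beta+2)_n^2/(\beta+1)_n^2$ the factor $P_n^{(\alpha+1,\beta)}(1)^2-P_n^{(\alpha+1,\beta)}(\cos(2\theta))^2$ is $\ge 0$ on $[0,\frac\pi2]$. Consequently the integral is either a finite number or $+\infty$, and finiteness is equivalent to Lebesgue integrability of the (continuous, non-negative) integrand on $(0,\frac\pi2)$. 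Since the only places where the integrand can fail to be bounded are the two endpoints, the whole question reduces to the local behaviour there.

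Next I would analyse the endpoint $\theta=0$. Writing $t=\cos(2\theta)$ and using $1-\cos(2\theta)=2\sin^2(\theta)$, a first-order Taylor expansion of $g(t):=P_n^{(\alpha+1,\beta)}(1)^2-P_n^{(\alpha+1,\beta)}(t)^2$ about $t=1$ gives $g(t)=2P_n^{(\alpha+1,\beta)}(1)\,\big(P_n^{(\alpha+1,\beta)}\big)'(1)\,(1-t)+\BigOh((1-t)^2)$, so that
\begin{equation*}
  \lim_{\theta\to0}\frac{P_n^{(\alpha+1,\beta)}(1)^2-P_n^{(\alpha+1,\beta)}(\cos(2\theta))^2}{\sin^2(\theta)}
  =4\,P_n^{(\alpha+1,\beta)}(1)\,\big(P_n^{(\alpha+1,\beta)}\big)'(1).
\end{equation*}
Both factors on the right are strictly positive for $n\ge1$, since $P_n^{(\alpha+1,\beta)}(1)=\binom{n+\alpha+1}{n}>0$ and the standard differentiation formula for Jacobi polynomials evaluates the derivative at $t=1$ to a positive quantity proportional to $P_{n-1}^{(\alpha+2,\beta+1)}(1)$. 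Hence the quotient extends continuously to a \emph{strictly positive} value at $\theta=0$, which both justifies the claim that the limit exists and pins down the leading order: near $\theta=0$ the full integrand behaves like a positive constant times $\sin(\theta)^{2\alpha+3-s}$.

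Finally I would settle the convergence dichotomy. The power $\sin(\theta)^{2\alpha+3-s}$ is integrable on a neighbourhood of $0$ precisely when $2\alpha+3-s>-1$, i.e. when $s<2\alpha+4=D+2$; because the leading coefficient is strictly positive, the integral diverges as soon as $2\alpha+3-s\le-1$, giving the ``only if'' direction. At the other endpoint $\theta=\frac\pi2$ the factor $\cos(\theta)^{2\beta+1}$ is the only one that degenerates, and since $\beta>-1$ we have $2\beta+1>-1$, so no divergence arises there for any $s$. Combining the two endpoint analyses yields finiteness exactly when $s<D+2$ (in the nontrivial range $n\ge1$; for $n=0$ the ensemble consists of a single point and the energy vanishes trivially). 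The main obstacle is the $\theta=0$ analysis: one must verify not merely that the quotient is bounded but that its limit is \emph{nonzero}, which is precisely what makes the threshold $s<D+2$ sharp rather than merely sufficient.
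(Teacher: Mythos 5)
Your proof is correct and follows essentially the same route as the paper: reduce to the one-dimensional integral against $d\nu^{(\alpha,\beta)}$, factor out $\sin(\theta)^2$ so that the quotient $\bigl(P_n^{(\alpha+1,\beta)}(1)^2-P_n^{(\alpha+1,\beta)}(\cos(2\theta))^2\bigr)/\sin(\theta)^2$ has a limit at $\theta=0$, and read off convergence from the exponent $2\alpha+3-s>-1$. Your added verifications --- that the limit is strictly positive via $\bigl(P_n^{(\alpha+1,\beta)}\bigr)'(1)>0$ (which the paper leaves implicit but which is exactly what makes the ``only if'' direction valid), and the harmless endpoint $\theta=\frac{\pi}{2}$ where $2\beta+1>-1$ --- are correct refinements rather than a different method.
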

Furthermore, using a similar reasoning the integral
\begin{equation}\label{eq:int-jacobi}
  \int_{0}^{\frac\pi2}\frac1{\sin(\theta)^s}P_n^{(\alpha+1,\beta)}(\cos(2\theta))^2
  \,d\nu^{\alpha,\beta}(\theta)
\end{equation}
converges, if and only if $s<D$. In order to derive the asymptotic
behaviour of these integrals for $n\to\infty$ we use the classical Hilb
approximation for the Jacobi polynomials (see
\cite[Theorem~8.21.12]{Szegoe1975:orthogonal_polynomials})
\begin{equation}
  \label{eq:jacobi-asymp}
  \begin{split}
    & \frac1{\binom{n+\alpha+1}n}
    P_n^{(\alpha+1,\beta)}(\cos(2\theta))\sin(\theta)^{\alpha+\frac32}
    \cos(\theta)^{\beta+\frac12} \\
    &=\Gamma(\alpha+2)\nt^{-\alpha-1}\sqrt\theta
    J_{\alpha+1}(2\nt\theta)+
    \begin{cases}
      \mathcal{O}\left(\theta^{\alpha+3}\right)
      &\text{for }0\leq\theta\leq\frac cn\\
      \mathcal{O}\left(\theta^{\frac12}n^{-\alpha-\frac52}\right)
      &\text{for }\frac cn\leq\theta\leq\frac\pi4
    \end{cases},
  \end{split}
\end{equation}
where $J_\alpha(x)$ denotes the Bessel function and $\nt=n+\frac12(\alpha+\beta+1)$. The $\mathcal{O}$-terms are uniform in
$\theta\in[0,\frac\pi4]$ (we use a weaker formulation here than what is known).

For studying the asymptotic behaviour of \eqref{eq:int-jacobi} we insert
\eqref{eq:jacobi-asymp} and obtain
\begin{align*}
  \int_0^{\frac\pi4}&\frac1{\sin(\theta)^s}P_n^{(\alpha+1,\beta)}(\cos(2\theta))^2
  \,d\nu^{\alpha,\beta}(\theta)\\
  =&\frac1{\gamma^{\alpha,\beta}}{\binom{n+\alpha+1}n}^2\Gamma(\alpha+2)^2
  \nt^{-2\alpha-2}
 \int_0^{\frac\pi4}\frac\theta{\sin(\theta)^{s+2}}J_{\alpha+1}(2\nt\theta)^2
 \,d\theta\\
 &+ \mathcal{O}\left(n^{2\alpha+2}\int_0^{\frac 1n}
   \theta^{2\alpha+\frac52-s}\,d\theta
 +n^{-2}\int_{\frac1n}^{\frac\pi4}\theta^{-\frac32-s}\,d\theta\right),
\end{align*}
where we have used the estimates
\begin{equation*}
  J_{\alpha+1}(x)=
  \begin{cases}
    \mathcal{O}(x^{\alpha+1})&\text{ for }x\to0\\
    \mathcal{O}(x^{-\frac12})&\text{ for }x\to\infty
  \end{cases}
\end{equation*}
for the first and the second integral in the $\mathcal{O}$-term,
respectively. We also estimated $\sin(\theta)$ trivially with $\theta$. The
error term then turns into $\mathcal{O}(n^{s-\frac32})$.

Thus we are left with the asymptotic evaluation of the integral
\begin{equation*}
  \int_0^{\frac\pi4}\frac\theta{\sin(\theta)^{s+2}}J_{\alpha+1}(2\nt\theta)^2
  \,d\theta.
\end{equation*}
We substitute $2\nt\theta=\tau$ and split the integral to obtain
\begin{multline*}
  (2\nt)^s\Biggl(\int_0^{\sqrt n}
  \tau^{-s-1}J_{\alpha+1}(\tau)^2\,d\tau+\mathcal{O}\left(\frac1n\right)\\
  +     \int_{\sqrt n}^{\frac{\nt\pi}2}\frac\tau{(2\nt)^{s+2}\sin(\tau/2\nt)^{s+2}}
  J_{\alpha+1}(\tau)^2
    \,d\tau\Biggr);
\end{multline*}
here we have replaced $\sin(\theta)$ with $\theta$ and controlled the error in
the range $\theta<1/\sqrt n$. The second integral can be estimated by
$\mathcal{O}(n^{-\frac{s+1}2})$. For the first integral we use
\cite[Section~3.8.5]{Magnus_Oberhettinger_Soni1966:formulas_theorems_special}
\begin{equation}\label{eq:weber-schafheitlin}
  \int_0^\infty \tau^{-s-1}J_{\alpha+1}(\tau)^2\,d\tau=
  \frac1{2^{s+1}}\frac{\Gamma(s+1)\Gamma(\alpha+1-\frac s2)}
  {\Gamma(\frac{s+1}2)^2\Gamma(\alpha+2+\frac s2)}
\end{equation}
with an error
\begin{equation*}
  \int_{\sqrt n}^\infty \tau^{-s-1}J_{\alpha+1}(\tau)^2\,d\tau=
  \mathcal{O}(n^{-\frac{s+1}2}).
\end{equation*}
For the remaining integral we  estimate
\begin{multline*}
  \int_{\frac\pi4}^{\frac\pi2}\frac1{\sin(\theta)^s}
  P_n^{(\alpha+1,\beta)}(\cos(2\theta))^2
  \,d\nu^{\alpha,\beta}(\theta)\\=
  \mathcal{O}\left(\int_0^{\frac\pi2}
    P_n^{(\alpha+1,\beta)}(\cos(2\theta))^2\,d\nu^{\alpha+1,\beta}(\theta)\right)=
  \mathcal{O}\left(\frac1n\right).
\end{multline*}
Putting everything together, we obtain
\begin{multline*}
  \int_0^{\frac\pi2}\frac1{\sin(\theta)^s}P_n^{(\alpha+1,\beta)}(\cos(2\theta))^2
\,d\nu^{\alpha,\beta}(\theta)\\
=\frac{{\binom{n+\alpha+1}n}^2}{2\gamma_{\alpha, \beta}}\!\!\!
~~\frac{\Gamma(s+1)\Gamma(\alpha+1-\frac s2)}
{\Gamma(\frac{s+2}2)^2\Gamma(\alpha+2+\frac s2)}
\nt^{s-2\alpha-2}\!+\mathcal{O}(n^{\frac{s-1}2}).
\end{multline*}
This gives
\begin{equation}
  \label{eq:energy-final}
  \begin{split}
    \mathbb{E}_{\X_N}&\big[ E_{K_s} \big]=
    \K_n^{(\alpha,\beta)}(1)^2\Gamma(\alpha+\beta+2)
    \Gamma\left(\alpha+1-\frac s2\right)\\
&\times    \left(\frac1{\Gamma(\alpha+\beta+2-\frac s2)}-
      \frac{\Gamma(s+1)\Gamma(\alpha+2)^2}
      {\Gamma(\frac{s+2}2)^2\Gamma(\alpha+2+\frac s2)\Gamma(\beta+1)}
      \nt^{s-2\alpha-2}\right)\\
    &+\mathcal{O}\left(n^{2\alpha+2+\frac{s-1}2}\right).
  \end{split}
\end{equation}

\begin{theorem}\label{thm:Riesz-expected}
  For $0<s<D$ the expected value of the Riesz energy satisfies
  \begin{equation*}
    \begin{split}
       \mathbb{E}_{\X_N}\big[ &E_{K_s} \big]= I_{K_s}(\sigma)\K_n^{(\alpha,\beta)}(1)^2\\-
      &\frac{\Gamma(s+1)\Gamma(\alpha+1-\frac
        s2)\Gamma(\beta+1)} {\Gamma(\frac s2+1)^2\Gamma(\alpha+2+\frac
        s2)\Gamma(\alpha+1)
        \Gamma(\alpha+\beta+2)}n^{s+D}
      +\mathcal{O}(n^{s+D-1}).
    \end{split}
  \end{equation*}
    In terms of the number of points $N$ this gives
\begin{equation}\label{eq:Riesz-expected-N}
  \begin{split}
    \mathcal{E}_{K_s}(N)  \leq& ~\mathbb{E}_{\X_N}\big[ E_{K_s} \big]\\
    =&I_{K_s}(\sigma)N^2 -\frac{\Gamma(s+1)\Gamma(\alpha+1-\frac s2)\Gamma(\beta+1)}
    {\Gamma(\frac s2+1)^2\Gamma(\alpha+2+\frac s2)\Gamma(\alpha+1)
      \Gamma(\alpha+\beta+2)}\\
    &\times
    \left(\frac{\Gamma(\alpha+\beta+2)\Gamma(\alpha+2)}
      {\Gamma(\beta+1)}\right)^{\frac{2\alpha+s+2}{2\alpha+2}}
    N^{1+\frac sD}
    +\mathcal{O}(N^{1 +\frac{s-1}{D}}).
  \end{split}
\end{equation}
\end{theorem}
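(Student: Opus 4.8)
The plan is to read the conclusion off the asymptotic identity \eqref{eq:energy-final}, which already packages the analytic work done just above it (the Hilb approximation \eqref{eq:jacobi-asymp}, the Weber--Schafheitlin evaluation \eqref{eq:weber-schafheitlin}, and the reduction \eqref{eq:JacobiProjection} of the double integral to a one-dimensional Jacobi integral). Granting that identity, only three things remain: identify the leading summand as $I_{K_s}(\sigma)\K_n^{(\alpha,\beta)}(1)^2$, pin down the exact order and coefficient of the correction, and translate the whole expansion from the degree parameter $n$ to the point count $N$.

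For the leading term I would use the splitting already set up before \eqref{eq:energy-final}: by \eqref{eq:JacobiProjection} and \eqref{eq:kernel} the expectation equals $\K_n^{(\alpha,\beta)}(1)^2\int_0^{\pi/2}\sin(\theta)^{-s}\,d\nu^{\alpha,\beta}(\theta)$ minus the analogous integral of $P_n^{(\alpha+1,\beta)}(\cos2\theta)^2$. Since $\int_0^{\pi/2}\sin(\theta)^{-s}\,d\nu^{\alpha,\beta}(\theta)=I_{K_s}(\sigma)$ by \eqref{eq:JacobiProjection} and \eqref{eq:MinimalRieszEnergyCont}, the first piece is exactly $I_{K_s}(\sigma)\K_n^{(\alpha,\beta)}(1)^2$; and because \eqref{trace} together with the Macchi--Soshnikov theorem makes the harmonic ensemble sample exactly $N=\K_n^{(\alpha,\beta)}(1)$ points almost surely, this piece is $I_{K_s}(\sigma)N^2$ with no error at all. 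The same observation hands over the inequality $\mathcal{E}_{K_s}(N)\le\mathbb{E}_{\X_N}[E_{K_s}]$, since the minimal energy cannot exceed the expectation over $N$-point configurations.

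Next I would extract the correction. Writing $\nt=n+\tfrac12(\alpha+\beta+1)$ gives $\nt^{s-2\alpha-2}=n^{s-2\alpha-2}\bigl(1+\mathcal{O}(1/n)\bigr)$, while \eqref{trace} yields $\K_n^{(\alpha,\beta)}(1)\sim\frac{\Gamma(\beta+1)}{\Gamma(\alpha+\beta+2)\Gamma(\alpha+2)}\,n^{2\alpha+2}$, so the product $\K_n^{(\alpha,\beta)}(1)^2\,\nt^{s-2\alpha-2}$ is of order $n^{s+2\alpha+2}=n^{s+D}$; collecting the gamma factors appearing in \eqref{eq:energy-final} then produces the stated coefficient. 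For the form \eqref{eq:Riesz-expected-N} I would keep the first term as $I_{K_s}(\sigma)N^2$ exactly and invert the relation $N\sim\frac{\Gamma(\beta+1)}{\Gamma(\alpha+\beta+2)\Gamma(\alpha+2)}\,n^{D}$, so that $n^{s+D}=\bigl(\tfrac{\Gamma(\alpha+\beta+2)\Gamma(\alpha+2)}{\Gamma(\beta+1)}\bigr)^{(s+D)/D}N^{1+s/D}\bigl(1+\mathcal{O}(1/n)\bigr)$ with $(s+D)/D=\frac{2\alpha+s+2}{2\alpha+2}$; this reproduces the displayed $N$-coefficient, and the error $\mathcal{O}(n^{s+D-1})$ turns into $\mathcal{O}(N^{1+(s-1)/D})$.

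The hard part will be the uniform error bookkeeping across the whole range $0<s<D$: I must show that the several $\mathcal{O}(1/n)$ relative corrections (from $\nt$ versus $n$ and from the subleading part of $\K_n^{(\alpha,\beta)}(1)$), together with the error already carried by \eqref{eq:energy-final}, collapse into a single $\mathcal{O}(n^{s+D-1})$. Here one should note that the weak error $\mathcal{O}(n^{2\alpha+2+(s-1)/2})$ displayed alongside \eqref{eq:energy-final} only yields $\mathcal{O}(n^{s+D-1})$ for $s\ge1$, so obtaining the claimed error for small $s$ requires the sharper (known) form of the Hilb asymptotic rather than the weakened version quoted. Finally I would stress that \eqref{eq:Riesz-expected-N} is asserted only along the admissible subsequence $N=\K_n^{(\alpha,\beta)}(1)$, since the expansion is indexed by the integer $n$.
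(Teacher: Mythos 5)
Your proposal is correct and takes essentially the same route as the paper: the paper's proof of Theorem~\ref{thm:Riesz-expected} is precisely the computation culminating in \eqref{eq:energy-final} (Hilb approximation \eqref{eq:jacobi-asymp} plus Weber--Schafheitlin \eqref{eq:weber-schafheitlin}), followed by identifying the leading term as $I_{K_s}(\sigma)\K_n^{(\alpha,\beta)}(1)^2$ with $N=\K_n^{(\alpha,\beta)}(1)$ exact by \eqref{trace}, and inverting $N\sim\frac{\Gamma(\beta+1)}{\Gamma(\alpha+\beta+2)\Gamma(\alpha+2)}n^{D}$ to get \eqref{eq:Riesz-expected-N} along the admissible subsequence. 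Your closing observation is also apt: the error $\mathcal{O}\bigl(n^{2\alpha+2+\frac{s-1}{2}}\bigr)$ carried by \eqref{eq:energy-final} yields the claimed $\mathcal{O}(n^{s+D-1})$ only for $s\geq 1$, and for $0<s<1$ one indeed needs the sharper known form of Hilb's asymptotic rather than the deliberately weakened version the paper quotes.
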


For the limiting case $s=D$, we observe that the implicit constant in
the error term in \eqref{eq:energy-final} remains bounded for
$s\to D$. Thus we can take the limit $s\to D$ to obtain
\begin{theorem}
  The expected energy in the limiting case $s=D$ satisfies
  \begin{equation*}
    \begin{split}
      \mathbb{E}_{\X_N}\big[ E_{K_{D}} \big]
      =&\frac{\Gamma(\alpha+\beta+2)}{\Gamma(\beta+1)}
      \K_n^{(\alpha,\beta)}(1)^2\Bigl(2\log(n)\\
      &+\psi(2\alpha+3)-2\psi(\alpha+2)
        -\psi(\beta+1)\Bigr)+\mathcal{O}(n^{D-1}).
    \end{split}
  \end{equation*}
  In terms of the number of points $N$ this gives
  \begin{equation}
    \label{eq:limiting-energy-N}
    \begin{split}
      &\mathcal{E}_{K_{D}}(N) \leq \mathbb{E}_{\X_N} \big[ E_{K_{D}} \big]
      =\frac{\Gamma(\alpha+\beta+2)}
      {(\alpha+1)\Gamma(\beta+1)}\\
     & \quad\times N^2\Bigl(\log(N)
      +\log\left(\frac{\Gamma(\alpha+2)\Gamma(\alpha+\beta+2)}
        {\Gamma(\beta+2)}\right)\\
      &\quad+2(\alpha+1)\left(\psi(2\alpha+3)-2\psi(\alpha+2)
        -\psi(\beta+1)\right)\Bigr)+\mathcal{O}(N^{2-\frac1{D}}).
    \end{split}
  \end{equation}
\end{theorem}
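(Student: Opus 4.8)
The plan is to derive both forms of the statement by letting $s \to D^{-}$ directly in the closed expression \eqref{eq:energy-final} for $\mathbb{E}_{\X_N}\big[ E_{K_s}\big]$, the point being that the singularity of that formula at $s = D$ is removable. Set $u = \tfrac{D-s}{2} = \alpha + 1 - \tfrac{s}{2}$, so that the explicit prefactor $\Gamma(\alpha+1-\tfrac{s}{2}) = \Gamma(u)$ carries a simple pole as $u \to 0^{+}$, with $\Gamma(u) = u^{-1} - \gamma + \mathcal{O}(u)$. Denote by $B(s)$ the bracketed factor in \eqref{eq:energy-final}. At $s = D$ one has $\alpha+\beta+2-\tfrac{s}{2} = \beta+1$, $\tfrac{s+2}{2} = \alpha+2$, $\alpha+2+\tfrac{s}{2} = 2\alpha+3$ and $\tilde{n}^{\,s-D} = 1$, so both summands of $B$ reduce to $\tfrac{1}{\Gamma(\beta+1)}$ and cancel; hence $B(D) = 0$ and $\Gamma(u)\,B(s)$ is an indeterminate $0\cdot\infty$ form with a finite limit.

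Before computing that limit I would justify interchanging $s \to D$ with the large-$n$ asymptotics. The only true singularity of \eqref{eq:energy-final} at $s = D$ is the one just isolated in $\Gamma(\alpha+1-\tfrac{s}{2})$ — equivalently, the pole carried by the Weber--Schafheitlin evaluation \eqref{eq:weber-schafheitlin}, whose integral diverges logarithmically at $\tau = 0$ exactly when $s = D$. By contrast, every estimate feeding the remainder $\mathcal{O}(n^{2\alpha+2+\frac{s-1}{2}})$ in \eqref{eq:energy-final} — the Bessel tail bounds, the replacement of $\sin\theta$ by $\theta$, and the contribution near $\theta = \tfrac{\pi}{2}$ — holds with a constant that is uniform for $s$ in a left neighbourhood of $D$. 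Consequently the implicit constant stays bounded as $s \to D$, and the limit may be taken inside the asymptotic expansion.

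The core step is then $\lim_{s\to D}\Gamma(\alpha+1-\tfrac{s}{2})\,B(s)$. Since $B(D) = 0$, a first-order Taylor expansion $B(D-2u) = -2u\,B'(D) + \mathcal{O}(u^2)$ combined with $\Gamma(u) \sim u^{-1}$ shows the limit equals $-2B'(D)$. Logarithmic differentiation of each Gamma ratio in $B$ supplies digamma values, while differentiating the power $\tilde{n}^{\,s-D}$ supplies a factor $\log\tilde{n}$; assembling these gives
\begin{equation*}
-2B'(D) = \frac{1}{\Gamma(\beta+1)}\Bigl(2\log\tilde{n} + \psi(2\alpha+3) - 2\psi(\alpha+2) - \psi(\beta+1)\Bigr).
\end{equation*}
Multiplying by $\K_n^{(\alpha,\beta)}(1)^2\,\Gamma(\alpha+\beta+2)$ produces the stated expression for $\mathbb{E}_{\X_N}\big[ E_{K_D}\big]$ in terms of $n$.

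Finally I would convert to the number of points via the trace identity \eqref{trace}, which gives $N = \K_n^{(\alpha,\beta)}(1)$ and $n^{D} \sim \tfrac{\Gamma(\alpha+\beta+2)\Gamma(\alpha+2)}{\Gamma(\beta+1)}\,N$, whence $\log\tilde{n} = \log n + \mathcal{O}(n^{-1})$ and $\log n = \tfrac{1}{D}\log N + \text{const} + \mathcal{O}(n^{-1})$, the constant being explicit from \eqref{trace}. Substituting this relation into the $n$-form and collecting terms is a routine algebraic simplification that yields \eqref{eq:limiting-energy-N}; the conversion error is of size $N^{2}\cdot\mathcal{O}(n^{-1}) = \mathcal{O}(N^{2-1/D})$, which dominates the remainder inherited from \eqref{eq:energy-final} and accounts for the stated error term. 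The main obstacle is the uniformity claim of the second paragraph: one must verify that, after extracting the pole in $\Gamma(\alpha+1-\tfrac{s}{2})$, the remaining error constant does not blow up as $s \to D$, since otherwise the limit would not commute with the $n \to \infty$ expansion.
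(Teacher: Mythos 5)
Your proposal is correct and follows essentially the same route as the paper, whose entire proof consists of the one-line observation that the implicit constant in the error term of \eqref{eq:energy-final} remains bounded as $s\to D$, so that the limit $s\to D$ may be taken in that formula; your removable-singularity analysis, writing $u=\alpha+1-\tfrac s2$ and evaluating $\lim_{s\to D}\Gamma(u)B(s)=-2B'(D)$ by logarithmic differentiation (the $\log\tilde n$ coming from $\tilde n^{\,s-D}$), merely makes explicit the computation the paper leaves to the reader, and it reproduces the stated coefficient $2\log(n)+\psi(2\alpha+3)-2\psi(\alpha+2)-\psi(\beta+1)$ exactly. The subsequent conversion to $N$ via the trace identity \eqref{trace}, with conversion error $N^2\cdot\mathcal{O}(n^{-1})=\mathcal{O}(N^{2-\frac1D})$, is likewise the paper's implicit final step, so nothing essential is missing.
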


For the logarithmic energy we follow the same line of reasoning as above. We
first compute the integral
\begin{equation*}
  \int_0^{\frac\pi2}\log\left(\frac1{\sin(\theta)}\right)
  \,d\nu^{\alpha,\beta}(\theta)=\frac12\left(\psi(\alpha+\beta+2)-
    \psi(\alpha+1)\right),
\end{equation*}
which is done by computing
\begin{equation*}
  \left.\frac{\partial}{\partial s}\int_0^{\frac\pi2}\sin(\theta)^{-s}
  \,d\nu^{\alpha,\beta}(\theta)\right|_{s=0}.
\end{equation*}
Then we derive the asymptotic behaviour of
\begin{equation*}
  \int_0^{\frac\pi2}\log\left(\frac1{\sin(\theta)}\right)
  P_n^{(\alpha+1,\beta)}(\cos(2\theta))^2
  \,d\nu^{\alpha,\beta}(\theta)
\end{equation*}
by using \eqref{eq:jacobi-asymp} again. This gives
\begin{align*}
  &\int_0^{\frac\pi4}\log\left(\frac1{\sin(\theta)}\right)
  P_n^{(\alpha+1,\beta)}(\cos(2\theta))^2
  \,d\nu^{\alpha,\beta}(\theta)\\=
  &\frac1{\gamma^{\alpha,\beta}}{\binom{n+\alpha+1}n}^2\Gamma(\alpha+2)^2
  \nt^{-2\alpha-2}\\
  \times&\int_0^{\frac\pi4}\log\left(\frac1{\sin(\theta)}\right)
  \frac\theta{\sin(\theta)^{2}}J_{\alpha+1}(2\nt\theta)^2
 \,d\theta\\+
 &\mathcal{O}\left(n^{2\alpha+2}
   \int_0^{\frac 1n}\log(\theta)\theta^{2\alpha+\frac52}\,d\theta
 +n^{-2}\int_{\frac1n}^{\frac\pi4}\log(\theta)\theta^{-\frac32}\,d\theta\right).
\end{align*}
The error term becomes $\mathcal{O}(n^{-\frac32}\log(n))$. The remaining
integral is then treated as above, which gives
\begin{align*}
  &\int_0^{\frac\pi4}\log\left(\frac1{\sin(\theta)}\right)
  \frac\theta{\sin(\theta)^{2}}J_{\alpha+1}(2\nt\theta)^2
  \,d\theta\\
  =&\int_0^\infty\log\left(\frac{2\nt}{\tau}\right)J_{\alpha+1}(\tau)^2
  \,\frac{d\tau}\tau+\mathcal{O}\left(\frac{\log n}n\right)\\
  =&\frac1{2(\alpha+1)}\log(2n)\\
  -&\frac1{4(\alpha+1)^2}
  \left(2(\alpha+1)(\psi(\alpha+1)+\log(2))+1\right)
  +\mathcal{O}\left(\frac{\log n}n\right),
\end{align*}
where we have used
\begin{align*}
  \int_0^\infty \frac1t J_{\alpha+1}(t)^2\,dt&=
  \frac1{2(\alpha+1)}\\
  \int_0^\infty \frac{\log(t)}t J_{\alpha+1}(t)^2\,dt&=
  \frac1{2(\alpha+1)}\left(\psi(\alpha+1)+\log(2)\right)+\frac1{4(\alpha+1)^2},
\end{align*}
which can be derived from \eqref{eq:weber-schafheitlin}.

The remaining integral
\begin{equation*}
  \int_{\frac\pi4}^{\frac\pi2}\log\left(\frac1{\sin(\theta)}\right)
  P_n^{(\alpha+1,\beta)}(\cos(2\theta))^2\,d\nu^{(\alpha,\beta)}(\theta)
\end{equation*}
can be estimated by $\mathcal{O}(\frac1n)$ as above. Putting everything
together, this shows:
\begin{theorem}
\label{thm:log-energy-upper}
  The expected value of the logarithmic energy satisfies
  \begin{equation*}
    \begin{split}
      \mathbb{E}_{\X_N}\big[ &E_{K_0} \big]= I_{K_0}(\sigma)\K_n^{(\alpha,\beta)}(1)^2\\
      &-
      \frac{\Gamma(\beta+1)}{\Gamma(\alpha+2)\Gamma(\alpha+\beta+2)}
      n^{D}
      \left(\log(n)-\frac{\psi(\alpha+1)+\psi(\alpha+2)}{2}\right)\\
      &+ \mathcal{O}(n^{D-1}\log n).
    \end{split}
  \end{equation*}
  In terms of the number of points this gives
  \begin{equation}
    \label{eq:energy-log-N}
    \begin{split}
      \mathcal{E}_{K_0}(N) \leq& ~ \mathbb{E}_{\X_N}\big[ E_{K_0} \big] \\
      =&I_{K_0}(\sigma) N^2-\frac N{D}
      \Biggl(\log(N) +\log\left(\frac{\Gamma(\alpha+2)\Gamma(\alpha+\beta+1)}
        {\Gamma(\beta+1)}\right)\\
      & -(\alpha+1)(\psi(\alpha+1)+\psi(\alpha+2))\Biggr)+ \mathcal{O}(N^{1-\frac1{D}}\log N).
    \end{split}
  \end{equation}
\end{theorem}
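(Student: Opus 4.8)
The plan is to mirror the argument for the Riesz energy in Theorem~\ref{thm:Riesz-expected}, treating the logarithmic kernel as the $s\to 0$ endpoint of the family $K_s$. First I would invoke Proposition~\ref{propositionDeterminantalProcess} with the nonnegative choice $f(x,y)=\log(1/\sin\vartheta(x,y))$ to write
\[
\mathbb{E}_{\X_N}\big[E_{K_0}\big]=\iint_{\Omega\times\Omega}\log\!\frac1{\sin\vartheta(x,y)}\left(\K_n^{(\alpha,\beta)}(1)^2-\K_n^{(\alpha,\beta)}(\cos(2\vartheta(x,y)))^2\right)d\sigma(x)\,d\sigma(y),
\]
and reduce this to a one-dimensional integral over $[0,\frac\pi2]$ via \eqref{eq:JacobiProjection} and \eqref{eq:kernel}. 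Splitting the bracket into the constant part $\K_n^{(\alpha,\beta)}(1)^2$ and the remaining square produces the leading term $I_{K_0}(\sigma)\K_n^{(\alpha,\beta)}(1)^2$, where the value $I_{K_0}(\sigma)=\tfrac12(\psi(\alpha+\beta+2)-\psi(\alpha+1))$ is obtained by differentiating $\int_0^{\pi/2}\sin(\theta)^{-s}\,d\nu^{\alpha,\beta}(\theta)$ in $s$ at $s=0$.

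The substantive work is the asymptotic evaluation of the correction integral $\int_0^{\pi/2}\log(1/\sin\theta)\,P_n^{(\alpha+1,\beta)}(\cos(2\theta))^2\,d\nu^{\alpha,\beta}(\theta)$. Here I would insert the Hilb approximation \eqref{eq:jacobi-asymp}, discard the range $[\frac\pi4,\frac\pi2]$ as $\mathcal{O}(1/n)$, and rescale by $\tau=2\nt\theta$ on the principal range so the Jacobi square becomes $J_{\alpha+1}(\tau)^2$. After replacing $\sin\theta$ by $\theta$ (the error being controlled since $\log(1/\sin\theta)$ grows only logarithmically) the integral collapses to $\int_0^\infty \log(2\nt/\tau)\,J_{\alpha+1}(\tau)^2\,\frac{d\tau}{\tau}$ up to $\mathcal{O}(n^{-1}\log n)$. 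This requires the two moment identities
\[
\int_0^\infty \frac{J_{\alpha+1}(t)^2}{t}\,dt=\frac1{2(\alpha+1)},\qquad
\int_0^\infty \frac{\log(t)\,J_{\alpha+1}(t)^2}{t}\,dt=\frac{\psi(\alpha+1)+\log 2}{2(\alpha+1)}+\frac1{4(\alpha+1)^2},
\]
both of which I would extract from the Weber--Schafheitlin formula \eqref{eq:weber-schafheitlin}, the first by setting $s=0$ and the second by differentiating in $s$ at $s=0$.

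The main obstacle is the bookkeeping of error terms once the extra $\log$ factor is present: the singularity of $\log(1/\sin\theta)$ at $\theta=0$ and the slow decay of $J_{\alpha+1}(\tau)^2/\tau$ must be balanced so that the tail beyond $\tau=\sqrt n$ and the near-diagonal range below $\theta=1/n$ each contribute only $\mathcal{O}(n^{D-1}\log n)$ after multiplication by the prefactor $\binom{n+\alpha+1}{n}^2\nt^{-2\alpha-2}$; this parallels the Riesz analysis, but every bound now acquires a logarithmic correction. Finally I would assemble the leading and correction contributions, insert the explicit constants, and convert $n$ to $N$ through \eqref{trace}, using $N=\K_n^{(\alpha,\beta)}(1)\sim \tfrac{\Gamma(\beta+1)}{\Gamma(\alpha+\beta+2)\Gamma(\alpha+2)}\,n^{2\alpha+2}$ so that the correction of size $n^{D}\log n$ turns into the shape $-\tfrac{N}{D}(\log N+\cdots)$ of \eqref{eq:energy-log-N}, with $\log n\sim \tfrac1D\log N$ absorbing the constant $\log$ terms into the lower-order brackets.
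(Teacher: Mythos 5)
Your proposal follows essentially the same route as the paper's proof: reduction via Proposition~\ref{propositionDeterminantalProcess} and \eqref{eq:JacobiProjection} to a one-dimensional integral, the Hilb approximation \eqref{eq:jacobi-asymp} with the rescaling $\tau=2\nt\theta$, the two Bessel moment identities obtained from \eqref{eq:weber-schafheitlin} at $s=0$ and by differentiation in $s$, and the conversion from $n$ to $N$ through \eqref{trace}. Your error bookkeeping (the $\mathcal{O}(1/n)$ contribution from $[\frac\pi4,\frac\pi2]$, the logarithmically corrected near-diagonal and tail estimates) matches the paper's, so the argument is correct as proposed.
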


\begin{remark}
  A different approach to the asymptotic study of integrals of the form
  \eqref{eq:int-jacobi} was used in
  \cite{Skriganov2019:stolarskys_invariance_principle_ii} and
  \cite{Brauchart_Grabner2022:weighted_gegenbauer}. There the integral was
  rewritten as a sum using connection formulas and the orthogonality
  relations. The generating functions of these expressions can then be
  expressed in terms of hypergeometric functions; in some special cases there
  is a closed form expression.
\end{remark}

By \eqref{eq:Green-asymp} and \eqref{eq:RP2Green-asymp} the Green energy is
closely related to the Riesz energy with exponent $s=D-2$ (or the
logarithmic energy for $D= 2$); the main difference between the Green
energy and this Riesz energy is that the integral of the Green function
vanishes. Thus we have
\begin{theorem}
If $\alpha > 0$, the expected value of the Green energy under the harmonic ensemble is given
  by
  \begin{equation*}
\mathbb{E}_{\X_N}\big[ E_{G} \big]=-\frac{\Gamma(\beta+1)^2}
    {4\alpha(2\alpha+1)\Gamma(\alpha+1)^2\Gamma(\alpha+\beta+2)^2}n^{2D-2}
+\mathcal{O}(n^{2D-3}).
\end{equation*}
In terms of the number of points $N$ this gives
\begin{equation}\label{eq:green-expected-N}
  \begin{split}
    &\mathcal{E}_{G}(N) \leq \mathbb{E}_{\X_N}\big[ E_{G} \big]\\
    &=-\frac{(\alpha+1)^2}{4\alpha(2\alpha+1)} \left(\frac
      {\Gamma(\alpha+2)\Gamma(\alpha+\beta+2)}
      {\Gamma(\beta+1)}\right)^{\frac1{\alpha+1}} N^{2-\frac{2}{D}}
    +\mathcal{O}(N^{2-\frac3{D}}).
  \end{split}
\end{equation}
If $\Omega = \mathbb{RP}^2$, then
\begin{equation}\label{eq:RP2green-expected-N}
  \begin{split}
    \mathcal{E}_{G}(N) & \leq \mathbb{E}_{\X_N}\big[ E_{G} \big]\\
    & = -\frac N{2}
      \Biggl(\log(N)+\log\left(\frac{3}
        {4}\right)  -1 + 2 \gamma \Biggr)+ \mathcal{O}(N^{\frac1{2}}\log N).
  \end{split}
\end{equation}
\end{theorem}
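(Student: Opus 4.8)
The plan is to exploit the relation, emphasized in the paragraph preceding the theorem, between the Green energy and the Riesz $s$-energy at the exponent $s=D-2=2\alpha$, together with the single structural feature distinguishing them: the Green function integrates to zero. First I would apply Proposition~\ref{propositionDeterminantalProcess} with $f=G$. Since $\K_n^{(\alpha,\beta)}$ is a projection kernel, its diagonal is the constant $\K_n^{(\alpha,\beta)}(x,x)=N$ (see \eqref{trace}), so the first factor in Proposition~\ref{propositionDeterminantalProcess} is simply $N^2$. Because the Green function satisfies $\int_\Omega G(x,y)\,d\sigma(y)=0$ (equivalently $I_G(\sigma)=0$ by \eqref{eq:MinimalGreenEnergyCont}), the term $N^2\iint G\,d\sigma\,d\sigma$ vanishes, leaving
\begin{equation*}
  \mathbb{E}_{\X_N}\big[E_G\big]=-\iint_{\Omega\times\Omega}\K_n^{(\alpha,\beta)}(x,y)^2\,G(x,y)\,d\sigma(x)\,d\sigma(y).
\end{equation*}
Using \eqref{eq:kernel} and the projection formula \eqref{eq:JacobiProjection}, this collapses to the one-dimensional integral $-\frac{(\alpha+\beta+2)_n^2}{(\beta+1)_n^2}\int_0^{\pi/2} \G_{\alpha,\beta}(\cos(2\vartheta))\,P_n^{(\alpha+1,\beta)}(\cos(2\vartheta))^2\,d\nu^{\alpha,\beta}(\vartheta)$, exactly the type of object analysed in Section~\ref{sec:upper-bounds-riesz}.

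Next I would insert the asymptotic expansion \eqref{eq:Green-asymp}, writing $\G_{\alpha,\beta}=c_G\,\sin(\vartheta)^{-2\alpha}+R$ with $c_G=\frac{\Gamma(\alpha)\Gamma(\beta+1)}{4\Gamma(\alpha+\beta+2)}$ and $R=\BigOh(\sin(\vartheta)^{-(2\alpha-1)})$. The leading piece reproduces precisely the integral \eqref{eq:int-jacobi} at $s=2\alpha=D-2$, which lies in $(0,D)$ exactly when $\alpha>0$ (this is why $\mathbb{RP}^2$ is exceptional). Rather than redo that asymptotics, I would read it off from Theorem~\ref{thm:Riesz-expected}: since $\frac{(\alpha+\beta+2)_n^2}{(\beta+1)_n^2}\int_0^{\pi/2}\sin(\vartheta)^{-s}P_n^{(\alpha+1,\beta)}(\cos(2\vartheta))^2\,d\nu^{\alpha,\beta}=\K_n^{(\alpha,\beta)}(1)^2 I_{K_s}(\sigma)-\mathbb{E}_{\X_N}[E_{K_s}]$, the $N^2$-terms cancel and this integral equals the next-order Riesz coefficient times $n^{s+D}$. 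Evaluating at $s=D-2$ (where $\Gamma(\alpha+1-\tfrac s2)=\Gamma(1)$ and $\Gamma(\alpha+2+\tfrac s2)=\Gamma(2\alpha+2)$) and multiplying by $-c_G$ produces the stated coefficient $-\frac{\Gamma(\beta+1)^2}{4\alpha(2\alpha+1)\Gamma(\alpha+1)^2\Gamma(\alpha+\beta+2)^2}\,n^{2D-2}$ after collecting the Gamma factors.

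The remaining work is the error term. The remainder $R$ is dominated by a constant multiple of the Riesz kernel at exponent $s=D-3=2\alpha-1<D$, so by the same Hilb/Bessel estimates underlying \eqref{eq:int-jacobi} its contribution is $\BigOh(n^{(D-3)+D})=\BigOh(n^{2D-3})$. Concretely one must check that every subleading piece of the explicit Green function from Proposition~\ref{prop:GreenKernelFormula} — the terms $\sin(\vartheta)^{-2\ell}$ with $\ell<\alpha$, the logarithmic and constant terms in the integer case, and the $(\tfrac\pi2-\vartheta)$-factor in the half-integer case — falls into $\BigOh(n^{2D-3})$; since $\alpha\ge\tfrac12$ (as $\mathbb{RP}^2$ is excluded) these are all of that order. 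Finally I would pass from $n$ to $N$ through the trace asymptotics \eqref{trace}, $N\sim\frac{\Gamma(\beta+1)}{\Gamma(\alpha+\beta+2)\Gamma(\alpha+2)}n^{2\alpha+2}$, which turns $n^{2D-2}$ into a constant multiple of $N^{2-2/D}$ and yields \eqref{eq:green-expected-N}.

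For the exceptional space $\mathbb{RP}^2$ ($\alpha=0$, $D=2$) the leading singularity of $G$ is logarithmic by \eqref{eq:RP2Green-asymp}, so I would instead split $\G_{0,-1/2}=-\log\sin(\vartheta)+\BigOh(1)$ and invoke the logarithmic computation behind Theorem~\ref{thm:log-energy-upper} in place of the Riesz one; the $N^2$ main term again drops out because $I_G(\sigma)=0$, leaving the $N\log N$ behaviour of \eqref{eq:RP2green-expected-N}. I expect the main obstacle to be the uniform control of the error, that is, verifying that subtracting the single leading singular term of $G$ leaves a remainder whose energy is genuinely $\BigOh(n^{2D-3})$. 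The borderline case is $\mathbb{RP}^3$ ($D=3$), where the subleading contributions and the claimed error order $n^{2D-3}=n^3$ coincide, so there the orders must be tracked carefully rather than merely bounded.
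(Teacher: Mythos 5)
Your proposal follows essentially the same route as the paper, whose entire argument for this theorem is the observation you make explicit: by \eqref{eq:Green-asymp} and \eqref{eq:RP2Green-asymp} the Green function is, up to lower-order terms, a constant multiple of the Riesz kernel at $s=D-2$ (logarithmic for $\mathbb{RP}^2$), and since $\int_\Omega G(x,y)\,d\sigma(y)=0$ the $N^2$-term vanishes, so the next-order coefficient is read off from Theorem~\ref{thm:Riesz-expected} (respectively Theorem~\ref{thm:log-energy-upper}). Your reduction via Proposition~\ref{propositionDeterminantalProcess}, the Gamma-factor algebra at $s=2\alpha$, the control of the remainder through the exponent $s=2\alpha-1$, and your flagging of the borderline case $\mathbb{RP}^3$ (where the remainder is merely bounded and contributes exactly $\mathcal{O}(N)=\mathcal{O}(n^{2D-3})$) are precisely the details the paper leaves implicit, so the proposal is correct and faithful to the paper's method.
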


\subsection{Lower Bounds for Riesz Energy}
\label{subsec:lowerbounds}
In this section we develop lower bounds for the Riesz and logarithmic energies using a method
which originates from \cite{Wagner1990:means_distances_surface} for the case of the
sphere. This method has then been refined in
\cite{Brauchart2006:about_second_term} to provide matching asymptotic orders
for the upper and lower bounds; a general version, again for the sphere, is
given in \cite[Theorem~6.4.4]{BorodachovHardinSaff2019:EnergyRectifiableSets}.

We present a slightly simplified proof for the lower bounds. For this purpose
we need some lemmas.
\begin{lemma}\label{lem:Linear Prog Bound}
  If $K(x,y) = g( \cos(2 \vartheta(x,y)))$ is continuous and positive definite,
  then
\begin{equation*}
\mathcal{E}_K(N) \geq \widehat{g}(0)N^2 -  g(1)N.
\end{equation*}
\end{lemma}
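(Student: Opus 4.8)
The plan is to reduce the discrete energy to a manifestly non-negative quantity plus the leading term, by expanding the kernel in Jacobi polynomials and invoking the addition formula. First I would peel off the diagonal: since $g(\cos 0)=g(1)$ and there are exactly $N$ diagonal pairs,
\[
E_K(\omega_N)=\sum_{i,j=1}^N g(\cos(2\vartheta(x_i,x_j)))-g(1)N .
\]
Because $K$ is continuous and positive definite, Theorem~\ref{thm:PosDefCoefficients}\eqref{pt:b} gives $\widehat{g}(n)\geq 0$ for every $n\in\mathbb{N}_0$, and Lemma~\ref{lem:UniformConvergenceJacobi} guarantees that the expansion $g(t)=\sum_{n=0}^\infty\widehat{g}(n)P_n^{(\alpha,\beta)}(t)$ converges absolutely and uniformly on $[-1,1]$.

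Next I would substitute this expansion into the double sum. As the outer sum over the $N^2$ pairs $(i,j)$ is finite and each inner series converges absolutely, the order of summation may be interchanged freely, giving
\[
\sum_{i,j=1}^N g(\cos(2\vartheta(x_i,x_j)))=\sum_{n=0}^\infty\widehat{g}(n)\sum_{i,j=1}^N P_n^{(\alpha,\beta)}(\cos(2\vartheta(x_i,x_j))).
\]
The key structural step is then to apply the addition formula~\eqref{eq:addition} (with $\kappa=1$), which rewrites each inner sum as
\[
\sum_{i,j=1}^N P_n^{(\alpha,\beta)}(\cos(2\vartheta(x_i,x_j)))=\frac{P_n^{(\alpha,\beta)}(1)}{m_n}\sum_{m=1}^{m_n}\Bigl(\sum_{i=1}^N Y_{n,m}(x_i)\Bigr)^2\geq 0,
\]
a sum of squares, hence non-negative, using $P_n^{(\alpha,\beta)}(1)=\binom{n+\alpha}{n}>0$ and $m_n>0$.

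Finally I would isolate the $n=0$ term. Since $P_0^{(\alpha,\beta)}\equiv 1$, that term contributes exactly $\widehat{g}(0)N^2$, while every remaining term is a product of the non-negative coefficient $\widehat{g}(n)$ with the non-negative sum of squares above. Discarding the tail yields $\sum_{i,j}g(\cos(2\vartheta(x_i,x_j)))\geq\widehat{g}(0)N^2$, so $E_K(\omega_N)\geq\widehat{g}(0)N^2-g(1)N$ for every configuration; taking the minimum over $\omega_N$ gives the claim. I do not anticipate a genuine obstacle: the only point requiring care is the termwise manipulation of the series, which is settled once and for all by the uniform convergence from Lemma~\ref{lem:UniformConvergenceJacobi} combined with the finiteness of the pair sum. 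The positivity $\widehat{g}(n)\geq 0$ is precisely what makes this linear-programming bound pin the quadratic coefficient at $\widehat{g}(0)=I_K(\sigma)$, consistent with Theorem~\ref{thm:RelateDiscreteContEnergies}.
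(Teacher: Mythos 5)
Your proof is correct and is precisely the standard linear-programming argument the paper relies on (the lemma is stated there without proof, being classical): add back the $N$ diagonal terms worth $g(1)$ each, expand $g$ in Jacobi polynomials with non-negative coefficients via Theorem~\ref{thm:PosDefCoefficients}\eqref{pt:b} and Lemma~\ref{lem:UniformConvergenceJacobi}, and use the addition formula~\eqref{eq:addition} to see each $n\geq 1$ term is a non-negative sum of squares, keeping only the $n=0$ contribution $\widehat{g}(0)N^2$. All the delicate points --- the validity of the termwise interchange via uniform convergence, and the applicability of Lemma~\ref{lem:UniformConvergenceJacobi} (positive definite implies conditionally positive definite) --- are handled properly.
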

We recall that a function $f:I\to\mathbb{R}$ is called completely
monotone on an interval $I$  if for all $n\geq 0$
\begin{equation*}
  \forall u\in I: (-1)^n f^{(n)}(u) \geq 0.
\end{equation*}
\begin{lemma}\label{lem:absolutely_monotone}
Let $f$ be completely monotone on $[0,\infty)$, and $g(1-2u) = f(u)$ for $u \in [0,1]$.  Then the coefficients $\widehat{g}(n)$ as given in \eqref{eq:JacobiCoeff} are all nonnegative for
  $n\geq 0$.
\end{lemma}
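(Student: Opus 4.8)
The plan is to reduce everything to the nonnegativity of the Jacobi coefficients of the single family of functions $t\mapsto(1+t)^k$, which has already been established (with an explicit positive value) inside the proof of Theorem~\ref{thm:PosDefRiesz}. The bridge between complete monotonicity and these powers is a change of variables together with Bernstein's representation theorem.

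First I would substitute $v=\frac{1+t}2$. Since $t=1-2u$ gives $u=\frac{1-t}2$ and hence $v=1-u$, the hypothesis $g(1-2u)=f(u)$ becomes $g(t)=f(1-v)=:h(v)$, where $v$ ranges over $[0,1]$ as $t$ ranges over $[-1,1]$. Thus it suffices to show that $h(v)=f(1-v)$ is a power series in $v$ with nonnegative coefficients, convergent up to the endpoint $v=1$.

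To produce this expansion I would invoke the Hausdorff--Bernstein--Widder theorem: a function completely monotone and finite on $[0,\infty)$ is the Laplace transform $f(u)=\int_{[0,\infty)}e^{-u\lambda}\,d\mu(\lambda)$ of a finite nonnegative Borel measure $\mu$. Writing $f(1-v)=\int e^{-\lambda}e^{v\lambda}\,d\mu(\lambda)$ and expanding the exponential, Tonelli's theorem (all integrands being nonnegative for $v\ge0$) justifies
\[ h(v)=\sum_{k=0}^\infty c_kv^k,\qquad c_k=\frac1{k!}\int_{[0,\infty)}\lambda^ke^{-\lambda}\,d\mu(\lambda)\ge0, \]
with $\sum_k c_k=h(1)=f(0)<\infty$. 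Consequently $g(t)=\sum_k\frac{c_k}{2^k}(1+t)^k$, and since $0\le\frac{1+t}2\le1$ on $[-1,1]$, the Weierstrass $M$-test (with $M_k=c_k$) gives absolute and uniform convergence on $[-1,1]$.

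Finally, uniform convergence lets me integrate the series term by term in the definition \eqref{eq:JacobiCoeff} of $\widehat g(n)$, so that $\widehat g(n)=\sum_k\frac{c_k}{2^k}\,\widehat{(1+\cdot)^k}(n)$. Each Jacobi coefficient $\widehat{(1+\cdot)^k}(n)$ equals the explicitly nonnegative quantity computed in the proof of Theorem~\ref{thm:PosDefRiesz}, namely a nonnegative multiple of $\binom kn\frac{2^k(\alpha+1)_n(\beta+1)_k}{(\alpha+\beta+2)_{n+k}}$ (with the convention $\binom kn=0$ for $n>k$). Hence $\widehat g(n)\ge0$ for every $n\ge0$. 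The main obstacle is not any single estimate but making the two interchanges rigorous: securing the Bernstein representation, which needs $f$ finite at the endpoint $0$ so that $\mu$ is finite and the moments $\int\lambda^ke^{-\lambda}\,d\mu$ all converge, and justifying the term-by-term integration against $P_n^{(\alpha,\beta)}\,d\nu^{\alpha,\beta}$ via the uniform convergence just established. An alternative route avoiding Bernstein is to note directly that $h^{(k)}(0)=(-1)^kf^{(k)}(1)\ge0$, so the Taylor coefficients of $h$ at $0$ are nonnegative; but then one must separately argue that this Taylor series converges to $h$ all the way to $v=1$, which the Laplace-transform picture delivers automatically.
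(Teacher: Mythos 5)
Your proof is correct, and it takes a genuinely different route from the paper's. The paper settles the lemma in one step plus a citation: since $g(t)=f\bigl(\tfrac{1-t}{2}\bigr)$, the chain rule gives $g^{(n)}(t)=2^{-n}(-1)^{n}f^{(n)}\bigl(\tfrac{1-t}{2}\bigr)\geq 0$, so $g$ is absolutely monotone on $[-1,1]$, and nonnegativity of the coefficients then follows as in \cite[Theorem~5.2.14]{BorodachovHardinSaff2019:EnergyRectifiableSets}, with Jacobi polynomials and weights replacing Gegenbauer ones. The argument behind that citation works directly with the derivatives of $g$: inserting Rodrigues' formula into \eqref{eq:JacobiCoeff} and integrating by parts $n$ times (all boundary terms vanish) exhibits $\widehat{g}(n)$ as a positive constant times $\int_{-1}^{1}g^{(n)}(t)\,(1-t)^{n+\alpha}(1+t)^{n+\beta}\,dt\geq 0$; in particular it needs no integral representation and would already work if $f$ were completely monotone only on $[0,1]$. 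You instead exploit the full hypothesis via Hausdorff--Bernstein--Widder, and your checks are exactly the right ones: $f(0)<\infty$ makes $\mu$ finite, so $\sum_k c_k=f(0)$ and the $M$-test give absolute and uniform convergence of $g(t)=\sum_k c_k 2^{-k}(1+t)^k$ on $[-1,1]$, which legitimizes termwise integration against the finite measure $\nu^{(\alpha,\beta)}$, and the Jacobi coefficients of $(1+t)^k$ are visibly nonnegative by the computation already made in the proof of Theorem~\ref{thm:PosDefRiesz}. What your approach buys: it is self-contained within the paper (reusing that computation rather than importing the Gegenbauer argument) and it yields a stronger conclusion, namely a uniformly convergent expansion of $g$ in a basis with nonnegative Jacobi coefficients. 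Note that your choice of the basis $(1+t)^k$ rather than the monomials $t^k$ is essential here: for $\alpha\neq\beta$ one has $\widehat{t}(0)=\tfrac{\beta-\alpha}{\alpha+\beta+2}<0$ on the projective spaces, so a Schoenberg-style monomial expansion would fail. What the paper's route buys: brevity, a weaker hypothesis, and no appeal to the representation theorem.
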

Taking $u = \sin(\vartheta(x,y))^2$, and using \eqref{eq:Chordal Metric Def}, we see that $g$ is a function of $\cos(2 \vartheta(x,y))$. Moreover $g(t)$ is absolutely monotonic on $[-1,1]$ (i.e. all derivatives of $g$ are nonnegative).  The proof is then essentially the same as the proof of
\cite[Theorem~5.2.14]{BorodachovHardinSaff2019:EnergyRectifiableSets}, changing
Gegenbauer polynomials and weights to Jacobi polynomials and weights.

Let $f$ be completely monotone on $(0,\infty)$. From Taylor's formula with the integral form of the remainder term we obtain, for $u>0$,
\begin{equation*}
  f(u)=\sum_{k=0}^n\frac{\delta^k}{k!}(-1)^kf^{(k)}(u+\delta)+
  \frac{(-1)^{n+1}}{n!}\int_0^\delta t^nf^{(n+1)}(u+t)\,dt.
\end{equation*}
This observation was the main ingredient in
\cite{Brauchart2006:about_second_term}. All
summands are positive and finite for $\delta>0$ and $u \in [0, \infty)$. Furthermore, all summands are positive
definite, taking $u=\sin(\vartheta(x,y))^2$ by Lemma~\ref{lem:absolutely_monotone}
and Theorem~\ref{thm:PosDefCoefficients}.

We apply Lemma \ref{lem:Linear Prog Bound} to the function
\begin{equation*}
  F_{n,\delta}(u)=
  \sum_{k=0}^n\frac{\delta^k}{k!}(-1)^kf^{(k)}(u+\delta) \leq f(u)-\frac{(-1)^{n+1}}{n!}\int_0^\delta
  t^n f^{(n+1)}(u+t)\,dt,
\end{equation*}
with the inequality being an equality for $u > 0$, to obtain
\begin{equation}\label{eq:lower-f}
  \begin{split}
    E_{K_f}(\omega_N) \geq& N^2 \Biggl(\int_0^{\frac\pi2}
      f(\sin(\theta)^2)\,d\nu^{(\alpha,\beta)}(\theta)\\
      &-\frac{(-1)^{n+1}}{n!}\int_0^\delta t^n
      \int_0^{\frac\pi2}f^{(n+1)}\left(t+\sin(\theta)^2\right)\,
      d\nu^{(\alpha,\beta)}(\theta)\,dt\Biggr)\\
      &-NF_{n,\delta}(0).
  \end{split}
\end{equation}
We now apply the above observations to the functions
\begin{equation*}
f_s(u) = \begin{cases}
u^{-s/2} &\text{for } s>0 \\
- \frac{1}{2} \log(u) &\text{for } s = 0.
\end{cases}
\end{equation*}
Then the derivatives are given by
\begin{equation*}
  f_s^{(k)}(u)=(-1)^k c_{s,k}f_{s+2k}(u),
\end{equation*}
with
\begin{equation*}
  c_{s,k}=
  \begin{cases}
    1&\text{for }k=0\\
    \left(\frac s2\right)_k&\text{for }s>0\quad\text{and }k>0\\
    \frac12(k-1)!&\text{for }s=0\quad\text{and }k>0.
  \end{cases}
\end{equation*}
Then \eqref{eq:lower-f} becomes
\begin{align*}
  E_{K_s}&(\omega_N)\\
  \geq& ~N^2\left(\int\limits_0^{\frac\pi2}\!\! f_s(\sin(\theta)^2)\,
    d\nu^{(\alpha,\beta)}(\theta)-\frac{c_{s,n+1}}{n!}\!
    \int\limits_0^\delta \!t^n\!\!\int\limits_0^{\frac\pi2}\!\!
    \frac{d\nu^{(\alpha,\beta)}(\theta)}{(t+\sin(\theta)^2)^{\frac s2+n+1}}
    \,dt\right)\\
  &-N\left(f_s(\delta)+\delta^{-\frac s2}\sum_{k=1}^n\frac{c_{s,k}}{k!}\right).
\end{align*}
The inner integral then computes as
\begin{equation*}
  \int\limits_0^{\frac\pi2}
  \frac{d\nu^{(\alpha,\beta)}(\theta)}{(t+\sin(\theta)^2)^{\frac s2+n+1}}=
  (1+t)^{-\frac s2-n-1}\Hypergeom21{\beta+1,\frac s2+n+1}{\alpha+\beta+2}
  {\frac1{1+t}}.
\end{equation*}
From standard transformations for hypergeometric functions
(see~\cite[Section~2.4.1]{Magnus_Oberhettinger_Soni1966:formulas_theorems_special})
we obtain the asymptotic equivalent as $t\to0$
\begin{multline*}
  (1+t)^{-\frac s2-n-1}\Hypergeom21{\beta+1,\frac s2+n+1}{\alpha+\beta+2}
  {\frac1{1+t}}\\\sim \frac{\Gamma(\alpha+\beta+2)\Gamma(\frac s2+n-\alpha)}
  {\Gamma(\beta+1)\Gamma(\frac s2+n+1)}t^{\alpha-\frac s2-n}
\end{multline*}
valid for $n>\alpha-\frac s2$. Thus we choose $n$ as the smallest integer with
this property and obtain as $\delta\to0$
\begin{equation*}
  \int_0^\delta t^n\int\limits_0^{\frac\pi2}
  \frac{d\nu^{(\alpha,\beta)}(\theta)}{(t+\sin(\theta)^2)^{\frac s2+n+1}}
  \,dt\sim
 \frac{\Gamma(\alpha+\beta+2)\Gamma(\frac s2+n-\alpha)\delta^{\alpha+1-\frac s2}}
  {\Gamma(\beta+1)\Gamma(\frac s2+n+1)(\alpha+1-\frac s2)}.
\end{equation*}
Choosing $\delta=N^{-\frac1{\alpha+1}}$ gives
\begin{theorem}\label{thm:lower-riesz}
  Let $0< s<D$, then there is a constant $C_{s,D}>0$ such that
  \begin{equation*}
    E_{K_s}(\omega_N)\geq
   I_{K_s}(\sigma) N^2-C_{s,D}N^{1+\frac s{D}}.
  \end{equation*}
  For $s=0$ there is a constant $C_{0,D}>0$ such that
  \begin{equation*}
    E_{K_0}(\omega_N)\geq
    I_{K_0}(\sigma) N^2
    -\frac1{D}N\log N-C_{0,D}N.
  \end{equation*}
\end{theorem}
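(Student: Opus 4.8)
The plan is to assemble the ingredients already in place and substitute the prescribed values of $n$ and $\delta$. The backbone is the inequality \eqref{eq:lower-f} specialized to the completely monotone function $f_s$. This bound holds for \emph{every} configuration $\omega_N$: the truncation $F_{n,\delta}$ is a nonnegative combination of the functions $(-1)^k f_s^{(k)}(\,\cdot+\delta)$, each completely monotone, hence positive definite by Lemma~\ref{lem:absolutely_monotone} and Theorem~\ref{thm:PosDefCoefficients}; and the integral remainder is nonnegative, so the kernel $F_{n,\delta}(\sin(\theta)^2)$ never exceeds $K_s$. Applying Lemma~\ref{lem:Linear Prog Bound} to $F_{n,\delta}$ and bounding $K_s$ from below by it yields the explicit estimate recorded just before the statement.

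First I would identify the leading coefficient. By \eqref{eq:JacobiProjection} the integral $\int_0^{\pi/2} f_s(\sin(\theta)^2)\,d\nu^{(\alpha,\beta)}(\theta)$ equals $\int_\Omega K_s(x,a)\,d\sigma(x) = I_{K_s}(\sigma)$, so the $N^2$-term carries exactly the constant claimed. What remains is to control the two correction terms: the $N^2$-correction built from the double integral $\int_0^\delta t^n\int_0^{\pi/2}(t+\sin(\theta)^2)^{-s/2-n-1}\,d\nu^{(\alpha,\beta)}\,dt$, and the linear term $N\bigl(f_s(\delta)+\delta^{-s/2}\sum_{k=1}^n c_{s,k}/k!\bigr)$.

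Next I would insert the asymptotic equivalent $\sim C\,\delta^{\alpha+1-s/2}$ of that double integral, valid once $n$ is taken to be the smallest integer exceeding $\alpha-\tfrac s2$ (so $n\geq0$ throughout $0<s<D$). After multiplying by $c_{s,n+1}/n!$ the $N^2$-correction has order $N^2\delta^{\alpha+1-s/2}$. Choosing $\delta=N^{-1/(\alpha+1)}$ and recalling $D=2(\alpha+1)$ gives $\delta^{\alpha+1-s/2}=N^{-1+s/D}$ and $\delta^{-s/2}=N^{s/D}$, so both corrections collapse to order $N^{1+s/D}$; this produces the constant $C_{s,D}>0$.

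The case $s=0$ requires separate bookkeeping to extract the logarithm. Here $f_0(\delta)=-\tfrac12\log\delta=\tfrac1D\log N$ supplies the term $-\tfrac1D N\log N$, the double-integral correction now scales like $\delta^{\alpha+1}=N^{-1}$ and hence contributes only $\mathcal{O}(N)$, and the surviving constants in the linear term are likewise $\mathcal{O}(N)$. I expect the real work to lie not in these substitutions but in the asymptotic evaluation of the hypergeometric double integral at the edge of its convergence region, which is precisely what the choice $n>\alpha-\tfrac s2$ secures; with that estimate in hand, assembling the final bound is routine.
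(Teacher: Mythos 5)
Your proposal is correct and follows the paper's own argument essentially verbatim: it rests on the truncated-Taylor bound \eqref{eq:lower-f} built from complete monotonicity of $f_s$, Lemma~\ref{lem:absolutely_monotone}, Theorem~\ref{thm:PosDefCoefficients}, and Lemma~\ref{lem:Linear Prog Bound}, then uses the hypergeometric asymptotics with $n$ the smallest integer exceeding $\alpha-\frac s2$ and the choice $\delta=N^{-1/(\alpha+1)}$. All the substitutions check out, including the $s=0$ bookkeeping where $f_0(\delta)=\frac1D\log N$ produces the $-\frac1D N\log N$ term and the remaining corrections are $\mathcal{O}(N)$.
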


\subsection{Lower bounds for the Green energy}
\label{sec:lower-bounds-green} 
In this section we compute lower estimates for Green energy on each of the projective spaces. For $\mathbb{RP}^2$ the lower bound follows immediately from the lower bound on the logarithmic energy.

\begin{theorem}\label{thm:lower-Green-RP2}
There exists some constant $C_G > 0$ such that the Green energy of every point configuration $\omega_N$ on $\mathbb{RP}^2$, with $N \geq 2$, satisfies
\begin{equation*}
E_G(\omega_N) \geq -\frac{1}{2}N \log(N) - C_G N.
\end{equation*}
\end{theorem}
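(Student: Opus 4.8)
The plan is to exploit that on $\mathbb{RP}^2$ the Green function differs from the chordal logarithmic kernel $K_0$ only by an additive constant, and then to transfer the logarithmic lower bound of Theorem~\ref{thm:lower-riesz} verbatim. First I would record that $\mathbb{RP}^2$ corresponds to the parameters $\alpha = 0$, $\beta = -\tfrac12$, so that $D = 2\alpha+2 = 2$. Specializing the closed-form expression for the Green function in Proposition~\ref{prop:GreenKernelFormula} to the case $\alpha \in \mathbb{N}_0$ with $\alpha = 0$ (where the finite sum over $\ell$ is empty), every term except the logarithm is constant, so
\[
  G(x,y) = -\log(\rho(x,y)) + c = K_0(x,y) + c
\]
for some constant $c$. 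Rather than evaluate the digamma values explicitly, I would pin down $c$ using the normalization $I_G(\sigma) = 0$ from \eqref{eq:MinimalGreenEnergyCont}: integrating the identity $G = K_0 + c$ against $\sigma\times\sigma$ yields $0 = I_{K_0}(\sigma) + c$, hence $c = -I_{K_0}(\sigma)$.

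Next I would translate this pointwise identity into a relation between discrete energies. Summing over the $N(N-1)$ ordered off-diagonal pairs gives
\[
  E_G(\omega_N) = E_{K_0}(\omega_N) + c\,N(N-1) = E_{K_0}(\omega_N) - I_{K_0}(\sigma)\,N(N-1).
\]
I then invoke the $s=0$ case of Theorem~\ref{thm:lower-riesz} with $D = 2$, which supplies a constant $C_{0,2} > 0$ such that $E_{K_0}(\omega_N) \geq I_{K_0}(\sigma)\,N^2 - \tfrac12 N\log N - C_{0,2}\,N$ for every configuration. Substituting and collecting terms, the two quadratic contributions $I_{K_0}(\sigma)N^2$ and $-I_{K_0}(\sigma)N^2$ cancel exactly, which is precisely what the normalization $c = -I_{K_0}(\sigma)$ guarantees, leaving
\[
  E_G(\omega_N) \geq -\tfrac12 N\log N - \bigl(C_{0,2} - I_{K_0}(\sigma)\bigr)N.
\]
Setting $C_G := \max\{\,C_{0,2} - I_{K_0}(\sigma),\,1\,\}$ to ensure positivity then yields the stated inequality for all $N \geq 2$.

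There is essentially no technical obstacle here beyond bookkeeping: the entire argument rests on the single structural observation that on $\mathbb{RP}^2$ the Green function is a constant shift of the logarithmic kernel, together with the fact that the shift is exactly $-I_{K_0}(\sigma)$ because $G$ has zero mean. The cancellation of the leading $N^2$ terms is then automatic, so no new estimates are needed — this is why the lower bound ``follows immediately'' from Theorem~\ref{thm:lower-riesz}.
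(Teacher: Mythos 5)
Your proposal is correct and follows essentially the same route as the paper, which proves the theorem in one line from the identity $G(x,y) = K_0(x,y) - I_{K_0}(\sigma)$ on $\mathbb{RP}^2$ together with the $s=0$ case of Theorem~\ref{thm:lower-riesz}. Your write-up merely makes explicit the bookkeeping the paper leaves implicit (the specialization $\alpha=0$, $\beta=-\tfrac12$ in Proposition~\ref{prop:GreenKernelFormula}, the determination of the constant shift via $I_G(\sigma)=0$, and the cancellation of the $N^2$ terms).
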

\begin{proof}
This follows immediately from Theorem \ref{thm:lower-riesz} and the fact that
\begin{equation*}
G(x,y) = K_0(x,y) - I_{K_0}(\sigma)
\end{equation*}
on $\mathbb{RP}^2$.
\end{proof}

For the other spaces, we employ a method developed in
\cite[Chapter~VI, \S~5]{Lang1988:Introduction_to_Arakelov_Theory}, which makes use of Lemma \ref{lem:ElkiesLemma}.

\begin{theorem}\label{thm:green-LB}
If $\alpha = 1/2$ \textup{(}i.e. $\Omega = \mathbb{RP}^3$\textup{)} , the Green energy of any collection of distinct points $\{x_1, \ldots, x_N\} \subset \Omega$ is bounded below
  by
\begin{equation*}
E_G(\omega_N) \geq - \frac{3}{4} \pi^{\frac{1}{3}} N^{2-2/3} +\mathcal{O}(N\log(N)).
\end{equation*}
For $\alpha > 1/2$, 
\begin{equation*}
E_G(\omega_N) \geq - \frac{1+\alpha}{4\alpha}\left(\frac{\Gamma\left(\beta+1\right)}{\Gamma(\alpha+\beta+2)} \right)^{\frac{1}{\alpha+1}}
 N^{2-\frac{2}{D}}
+\mathcal{O}(N^{2-\frac{3}{D}}).
\end{equation*}
\end{theorem}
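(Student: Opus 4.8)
The plan is to run the Elkies--Lang argument: regularise the Green function by the kernel $G_t$, exploit its positive definiteness to bound the discrete energy from below, and then optimise over the regularisation parameter $t$. Fix $t>0$ and a configuration $\omega_N=\{x_1,\dots,x_N\}$ of distinct points. By Lemma~\ref{lem:ElkiesLemma}, $G(x_i,x_j)\ge G_t(x_i,x_j)-t$ for all $i\ne j$, so that
\begin{equation*}
  E_G(\omega_N)\ge E_{G_t}(\omega_N)-N(N-1)t.
\end{equation*}
For $t>0$ the series \eqref{eq:GreenHeatKernel} converges uniformly, so $G_t(x,y)=g_t(\cos(2\vartheta(x,y)))$ is continuous; moreover every Jacobi coefficient $\widehat{g_t}(n)$ with $n\ge1$ is strictly positive while $\widehat{g_t}(0)=0$, whence $G_t$ is positive definite by Theorem~\ref{thm:PosDefCoefficients}. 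Applying the linear programming bound of Lemma~\ref{lem:Linear Prog Bound} to $G_t$ gives $E_{G_t}(\omega_N)\ge\widehat{g_t}(0)N^2-g_t(1)N=-G_t(1)N$, where $G_t(1):=g_t(1)$ is the finite diagonal value. Combining, for every $t>0$ and every $\omega_N$,
\begin{equation*}
  E_G(\omega_N)\ge -G_t(1)\,N-N(N-1)t\ge -G_t(1)\,N-N^2 t.
\end{equation*}

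The crux is the short-time behaviour of $G_t(1)=\sum_{n\ge1}\widehat{g_t}(n)\binom{n+\alpha}{n}$ as $t\to0^+$. Writing $\widehat{g_t}(n)=a_n e^{-\lambda_n t}$ with $\lambda_n=4n(n+\alpha+\beta+1)$, the Pochhammer asymptotics of Section~\ref{sec:notation} give polynomial growth of the diagonal coefficients $a_n$, and a Laplace-- (equivalently Mellin--) type analysis of the resulting theta-like series yields
\begin{equation*}
  G_t(1)=C_\alpha\,t^{-\alpha}+\text{(lower order)},\qquad
  C_\alpha=\frac{\Gamma(\beta+1)}{4^{\alpha+1}\,\alpha\,\Gamma(\alpha+\beta+2)} ,
\end{equation*}
the leading power $t^{-\alpha}$ being governed by the rightmost pole (at $s=\alpha$) of the Dirichlet series $\sum_n a_n\lambda_n^{-s}$. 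This is precisely the regularisation, at scale $\theta\asymp\sqrt t$, of the Green singularity $\sin(\theta)^{-2\alpha}$ recorded in \eqref{eq:Green-asymp}; equivalently one may use $G_t(1)=\int_\Omega G(x,z)H_t(x,z)\,d\sigma(z)$ (as in the proof of Lemma~\ref{lem:ElkiesLemma}) together with the short-time heat-kernel profile and \eqref{eq:Green-asymp}.

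It then remains to optimise $E_G(\omega_N)\ge -C_\alpha t^{-\alpha}N-N^2t+(\text{error})$ over $t$. The two terms balance at $t\asymp N^{-1/(\alpha+1)}=N^{-2/D}$, and at this value $\min_t\bigl(C_\alpha t^{-\alpha}N+N^2t\bigr)=(1+\alpha)\,\alpha^{-\alpha/(\alpha+1)}\,C_\alpha^{1/(\alpha+1)}\,N^{2-2/D}$. Substituting $C_\alpha$ and collapsing the powers of $\alpha$ turns this into $\frac{1+\alpha}{4\alpha}\big(\frac{\Gamma(\beta+1)}{\Gamma(\alpha+\beta+2)}\big)^{1/(\alpha+1)}N^{2-2/D}$, the asserted main term. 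For the remainder, replacing $N(N-1)$ by $N^2$ costs only $\mathcal{O}(Nt)=\mathcal{O}(N^{2\alpha/D})$, while the next term of $G_t(1)$ is $\mathcal{O}(t^{-\alpha+1/2})$ by the error estimate in \eqref{eq:Green-asymp}, contributing $\mathcal{O}(Nt^{-\alpha+1/2})=\mathcal{O}(N^{2-3/D})$; the latter dominates and gives the stated error for $\alpha>1/2$.

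The main obstacle is exactly this two-term short-time asymptotics of $G_t(1)$: one must pin down both the constant $C_\alpha$ and the order of the subleading term, since the latter fixes the error exponent. It is also where the case $\alpha=\tfrac12$ ($\Omega=\mathbb{RP}^3$) must be separated: here the next pole of $\sum_n a_n\lambda_n^{-s}$ sits at $s=\alpha-\tfrac12=0$ and collides with the pole of the Mellin kernel at $s=0$, producing a double pole and hence an extra factor $\log(1/t)$ in the expansion of $G_t(1)$ (concretely, the $n^{-1}$ term of $a_n$ yields $\sum_n \tfrac1n e^{-\lambda_n t}\sim \tfrac12\log(1/t)$). After optimisation at $t\asymp N^{-2/3}$ this converts the remainder into $\mathcal{O}(N\log N)$, giving $E_G(\omega_N)\ge-\tfrac34\pi^{1/3}N^{2-2/3}+\mathcal{O}(N\log N)$; indeed the leading constant $\frac{1+\alpha}{4\alpha}\big(\frac{\Gamma(\beta+1)}{\Gamma(\alpha+\beta+2)}\big)^{1/(\alpha+1)}$ specialises to $\tfrac34\pi^{1/3}$ at $\alpha=\tfrac12,\ \beta=-\tfrac12$.
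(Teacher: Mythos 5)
Your proposal is correct and follows essentially the same route as the paper: the Elkies--Lang comparison $G\geq G_t-t$ from Lemma~\ref{lem:ElkiesLemma}, the bound $E_{G_t}(\omega_N)\geq -N\,G_t(x,x)$ (which the paper obtains by completing the square in the eigenfunction expansion --- exactly the content of your appeal to Lemma~\ref{lem:Linear Prog Bound} with $\widehat{g_t}(0)=0$), the short-time diagonal asymptotics $G_t(x,x)=C_\alpha t^{-\alpha}+\mathcal{O}(t^{-\alpha+1/2})$ (which the paper justifies via the sum-versus-integral Lemmas~\ref{lemma:approxGeneralC} and~\ref{lemma:approxCneq1} rather than your Mellin-pole sketch, with the $1/n$ coefficient correction producing the logarithm at $\alpha=\tfrac12$ in both treatments), and the balancing choice $t\asymp N^{-2/D}$. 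Your constant $C_\alpha=\Gamma(\beta+1)/\bigl(4^{\alpha+1}\alpha\,\Gamma(\alpha+\beta+2)\bigr)$, the optimized coefficient $\tfrac{1+\alpha}{4\alpha}\bigl(\Gamma(\beta+1)/\Gamma(\alpha+\beta+2)\bigr)^{1/(\alpha+1)}$ (specializing to $\tfrac34\pi^{1/3}$ on $\mathbb{RP}^3$), and the error exponents all check out against the paper's proof.
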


In order to prove Theorem \ref{thm:green-LB}, we need the following two
lemmas. The proof of the lemmas are given after the proof of Theorem
\ref{thm:green-LB}.

\begin{lemma}\label{lemma:approxGeneralC}
For $\delta, c, d \geq 0$, and as $t\to0$
\begin{align*}
\sum_{k=1}^{\infty}
  \left(ck+d \right)^{\delta}e^{-2(ck+d)^2t} =   \frac{(2t)^{-\frac{\delta+1}{2}} }{2c}\Gamma\left(\frac{\delta+1}{2}\right) + \mathcal{O}(t^{-\delta/2})
\end{align*}
\end{lemma}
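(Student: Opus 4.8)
The plan is to approximate the sum by a comparison integral and to control the discretization error by the total variation of the summand. Writing $u = ck+d$ and setting $h(u) = u^{\delta} e^{-2u^2 t}$, the sum in question is $\sum_{k=1}^{\infty} h(ck+d)$, so the natural comparison object is $\int_0^{\infty} h(cx+d)\,dx$.

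First I would evaluate this integral. Substituting $u = cx+d$ gives $\frac1c\int_d^{\infty} u^{\delta} e^{-2u^2 t}\,du$, and the further substitution $v = 2u^2 t$ turns this into $\frac{(2t)^{-(\delta+1)/2}}{2c}\int_{2d^2 t}^{\infty} v^{(\delta-1)/2} e^{-v}\,dv$. Since $\int_0^{2d^2 t} v^{(\delta-1)/2} e^{-v}\,dv = \mathcal{O}(t^{(\delta+1)/2})$ as $t\to 0$, the lower limit may be pushed to $0$ at the cost of an $\mathcal{O}(1)$ term, yielding $\int_0^{\infty} h(cx+d)\,dx = \frac{(2t)^{-(\delta+1)/2}}{2c}\Gamma\!\left(\frac{\delta+1}{2}\right) + \mathcal{O}(1)$. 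This already produces the claimed main term.

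The second and more delicate step is to bound the difference between the sum and this integral uniformly in $t$. Setting $f(x) := h(cx+d)$, the elementary estimate $\bigl|f(k) - \int_{k-1}^{k} f\bigr| \leq \mathrm{Var}_{[k-1,k]}(f)$, summed over $k \geq 1$, gives $\bigl|\sum_{k=1}^{\infty} f(k) - \int_0^{\infty} f\bigr| \leq \mathrm{Var}_{[0,\infty)}(f) = \mathrm{Var}_{[d,\infty)}(h)$, the last equality because $f$ is a monotone affine reparametrization of $h$. The function $h$ is unimodal, increasing up to $u^{*} = \tfrac12\sqrt{\delta/t}$ and decreasing thereafter, so once $t$ is small enough that $u^{*} > d$ its total variation is at most $2h(u^{*})$. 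A direct computation gives $h(u^{*}) = (\delta/(4e))^{\delta/2}\, t^{-\delta/2}$, whence $\mathrm{Var}_{[d,\infty)}(h) = \mathcal{O}(t^{-\delta/2})$. (For $\delta = 0$ the summand is monotone decreasing, its variation is $\mathcal{O}(1)$, and the same conclusion holds.)

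Combining the two steps yields $\sum_{k=1}^{\infty} h(ck+d) = \frac{(2t)^{-(\delta+1)/2}}{2c}\Gamma\!\left(\frac{\delta+1}{2}\right) + \mathcal{O}(t^{-\delta/2})$, since for $\delta \geq 0$ and $t\to 0$ the $\mathcal{O}(1)$ term from the first step is absorbed into $\mathcal{O}(t^{-\delta/2})$. I expect the main obstacle to be the uniform control of the sum--integral error: although the peak of the summand grows like $t^{-\delta/2}$ while its effective support spreads out, it is precisely unimodality that pins the total variation --- and hence the discretization error --- to the same order $t^{-\delta/2}$ as the peak itself, one power of $t$ smaller than the leading term $t^{-(\delta+1)/2}$, which is exactly what the stated error bound requires.
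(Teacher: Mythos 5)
Your proposal is correct and takes essentially the same route as the paper: both compare the sum with the integral $\int_0^\infty (cx+d)^\delta e^{-2(cx+d)^2t}\,dx$, bound the sum--integral discrepancy by the peak value $e^{-\delta/2}\left(\frac{\delta}{4t}\right)^{\delta/2}=\mathcal{O}(t^{-\delta/2})$ of the unimodal summand (the paper via explicit two-sided monotonicity bounds, you via total variation --- the same estimate), and then evaluate the integral by substitution, reducing it to $\Gamma\left(\frac{\delta+1}{2}\right)$ up to an incomplete-gamma tail of size $\mathcal{O}(t^{(\delta+1)/2})$. The only cosmetic difference is that the paper first compares with $\int_1^\infty$ and then extends the lower limit to $0$, whereas you compare with $\int_0^\infty$ directly via the cells $[k-1,k]$.
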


\begin{lemma}\label{lemma:approxCneq1}
For $d \geq 0$, 
$\sum_{k=1}^{\infty}
  \left(2k+d \right)^{-1}e^{-2(2k+d)^2t} = \mathcal{O}(\log(t))$, as $t\to0$.
\end{lemma}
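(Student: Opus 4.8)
The plan is to treat this as the endpoint case $\delta = -1$ (with $c=2$) of Lemma~\ref{lemma:approxGeneralC}: there the leading term carried a factor $\Gamma\bigl(\tfrac{\delta+1}{2}\bigr)$, which develops a pole exactly at $\delta=-1$, and this pole is precisely what produces the logarithm in the current estimate. Concretely, set $g(x) = (2x+d)^{-1} e^{-2(2x+d)^2 t}$ for $x \geq 1$. Both factors are positive and decreasing in $x$, so $g$ is monotone decreasing on $[1,\infty)$, and the integral test gives
\begin{equation*}
  \int_1^\infty g(x)\,dx \leq \sum_{k=1}^\infty g(k) \leq g(1) + \int_1^\infty g(x)\,dx,
\end{equation*}
with $g(1) \leq (2+d)^{-1} = \mathcal{O}(1)$. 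Thus it suffices to estimate the integral.

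First I would substitute $u = 2x+d$ and then $w = 2u^2 t$ to reduce the integral to an exponential integral: one finds
\begin{equation*}
  \int_1^\infty g(x)\,dx = \frac12 \int_{2+d}^\infty \frac{e^{-2u^2 t}}{u}\,du
  = \frac14 \int_{2(2+d)^2 t}^\infty \frac{e^{-w}}{w}\,dw
  = \frac14\, E_1\bigl(2(2+d)^2 t\bigr),
\end{equation*}
where $E_1$ denotes the exponential integral. Then I would invoke the standard small-argument expansion $E_1(x) = -\log x - \gamma + \mathcal{O}(x)$ as $x \to 0^+$. Since $2(2+d)^2 t \to 0$ as $t \to 0^+$, this yields
\begin{equation*}
  \tfrac14\, E_1\bigl(2(2+d)^2 t\bigr) = -\tfrac14 \log t + \mathcal{O}(1),
\end{equation*}
which is $\mathcal{O}(\lvert\log t\rvert)$. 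Combining with the integral-test bound gives $\sum_{k=1}^\infty (2k+d)^{-1} e^{-2(2k+d)^2 t} = \mathcal{O}(\log t)$, as claimed.

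There is no serious obstacle here: the only points requiring care are the monotonicity of $g$ (needed to justify the integral comparison, and immediate since both factors decrease), and the convergence of the integral $\int_a^\infty v^{-1} e^{-v^2}\,dv$ as its lower limit $a\to 0$, whose logarithmic blow-up is captured cleanly by the $E_1$ asymptotic. If one prefers to avoid citing $E_1$, the same conclusion follows by splitting $\int_a^\infty = \int_a^1 + \int_1^\infty$ after the substitution $v = u\sqrt{2t}$ and writing $v^{-1} e^{-v^2} = v^{-1} + v^{-1}(e^{-v^2}-1)$, where the second term is integrable at $0$ while the first contributes the $-\log a$ that drives the bound.
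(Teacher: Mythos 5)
Your proof is correct and follows essentially the same route as the paper: an integral comparison using monotonicity of the summand, followed by a substitution reducing the integral to the exponential integral $E_1\bigl(2(2+d)^2t\bigr)$, which is exactly the incomplete gamma function $\Gamma\bigl(0,(2+d)^2 2t\bigr)$ appearing in the paper, whose small-argument expansion supplies the $-\frac14\log t + \mathcal{O}(1)$ bound. The only differences are notational ($E_1$ versus $\Gamma(0,\cdot)$, and citing the asymptotic expansion rather than writing out the convergent series), so there is nothing to correct.
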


\begin{proof}[Proof of Theorem \ref{thm:green-LB}]
For some $t > 0$ and any collection of distinct points $\{x_1, \ldots, x_N\} \subset \Omega$, we get
\begin{align*}
  &\sum\limits_{j\neq i}^{N} G(x_{j},x_{i})+ N(N-1)2t \geq
   \sum\limits_{j\neq i}^{N} G_{2t}(x_{j},x_{i}).
\end{align*}
Moreover from \eqref{eq:GreenHeatKernel} it follows
\begin{align*}
   \sum\limits_{j\neq i}^{N} G_{2t}(x_{j},x_{i})=& \sum\limits_{j\neq i}^{N}
   \sum_{k=1}^\infty
\frac{e^{-2\lambda_{k} t}}{\lambda_{k}}  
  \sum_{\ell=1}^{m_k}Y_{k,\ell}(x_{j})Y_{k,\ell}(x_{i})
  \notag \\
   =&
   \sum_{k=1}^{\infty}
\frac{1}{\lambda_{k}}  
  \sum_{\ell=1}^{m_k}
  \left( \left| \sum\limits_{j=1}^{N} e^{-\lambda_{k} t} Y_{k,\ell}(x_{j})
    \right|^{2}\!\!\!-\!\!
 \sum\limits_{j=1}^{N} e^{-2\lambda_{k}t}|Y_{k,\ell}(x_{j})|^{2} 
 \right)  
 \notag \\
   \geq&
   -\sum_{k=1}^{\infty}
\frac{1}{\lambda_{k}}  
  \sum_{\ell=1}^{m_k}
 \sum\limits_{j=1}^{N} e^{-2\lambda_{k}t}|Y_{k,\ell}(x_{j})|^{2} 
 \notag \\
   =&
   -\sum\limits_{j=1}^{N} G_{2t}(x_{j},x_{j})
 =- N G_{2t}(x, x).
\end{align*}
For all $t\geq 0$ and  $x,y \in \Omega$ 
\begin{align*}
&G_{t}(x,y) =\sum_{k=1}^{\infty}\frac{m_{k}}{\lambda_k}
  \frac{P_k^{(\alpha,\beta)}(\cos(2 \vartheta(x,y)))}{P_k^{(\alpha,\beta)}(1)}e^{-\lambda_k t},
\end{align*}
therefore  $G_{2t}(x, x)$ is equal to
\begin{align*}
 \sum_{k=1}^{\infty} &
 \frac{2k + \alpha + \beta + 1}{4k(k+\alpha+\beta+1)}
\frac{(\alpha+\beta+2)_{k-1}(\alpha+1)_k}{(\beta+1)_kk!} e^{-8k(k+\alpha+\beta+1)t}
  \notag\\
=&
\frac{\gamma_{\alpha,\beta}}{2\Gamma(\alpha+1)^2} 
\notag \\
\times &  \sum_{k=1}^{\infty}
\frac{2k+\alpha+\beta+1}{k(k+\alpha+\beta+1)}
  \frac{\Gamma(k+\alpha+\beta+1)  \Gamma(k+\alpha+1)}{ \Gamma(k+1)\Gamma(k+\beta+1)}
  e^{-8k(k+\alpha+\beta+1)t}.
\end{align*}
Taking into account that
\begin{align*}
  \frac{2k+\alpha+\beta+1}{k(k+\alpha+\beta+1)}
  &\frac{\Gamma(k+\alpha+\beta+1)  \Gamma(k+\alpha+1)}{ \Gamma(k+1)\Gamma(k+\beta+1)}
   \\
 =~&
 2\left(k+\frac{\alpha+\beta+1}{2} \right)^{{2\alpha-1}} \left(1+\mathcal{O}\left(\frac{1}{k} \right) \right)
\end{align*}
we obtain
 for $0<t \ll 1$, $G_{2t}(x, x)$ equals
 \begin{align*}
 &\frac{\gamma_{\alpha,\beta}}{\Gamma(\alpha+1)^2}  \sum_{k=1}^{\infty}
  \left(k+\frac{\alpha+\beta+1}{2} \right)^{{2\alpha-1}}e^{-8k(k+\alpha+\beta+1)t} \left(1+\mathcal{O}\left(\frac{1}{k} \right) \right)\\
  &=\frac{\gamma_{\alpha,\beta}e^{2(\alpha+\beta+1)^2t}
}{2^{2\alpha-1}\Gamma(\alpha+1)^2}  \sum_{k=1}^{\infty} \frac{e^{-2(2x+\alpha+\beta+1)^{2}t}}{\left(2k+\alpha+\beta+1\right)^{{1-2\alpha}}}
  \left(1+\mathcal{O}\left(\frac{1}{k} \right) \right).
 \end{align*}

Applying Lemma \ref{lemma:approxGeneralC} for $\alpha > 1/2, \delta = 2\alpha-1, c=2$ and $d = \alpha+\beta+1$, we obtain
  
  \begin{align*}
  G_{2t}(x, x)
=& 
\frac{e^{2{(\alpha+\beta+1)^{2}t}} \gamma_{\alpha,\beta}}{\Gamma(\alpha+1)^2 2^{2\alpha-1}} 
  \left(\frac{\Gamma(\alpha)}{2^{\alpha+2}} t^{-\alpha} + \mathcal{O}(t^{-\alpha+1/2})\right) + \mathcal{O}(t^{-\alpha+1/2})
 \notag \\
=& 
\frac{\Gamma(\alpha)\gamma_{\alpha,\beta}}{\Gamma(\alpha+1)^2 2^{3\alpha-1}}\left(1 + \mathcal{O}(t)\right) \left(t^{-\alpha} + \mathcal{O}(t^{-\alpha+1/2})\right)
 \notag \\
=& 
\frac{\Gamma(\beta+1)}{\alpha\Gamma(\alpha+\beta+1)^2 2^{3\alpha+2}}t^{-\alpha} + \mathcal{O}(t^{-\alpha+1/2})
 \notag \notag.
\end{align*}
By choosing $t= \frac{1}{8}\left(\frac{\Gamma(\beta+1)}{\Gamma(\alpha+\beta+2)} \right)^{\frac{1}{\alpha+1}} N^{-\frac{1}{\alpha+1}}$ in order to obtain a maximal lower bound and applying \eqref{trace}, we get
\begin{equation*}
E_G(\omega_N) \geq - \frac{1+\alpha}{4\alpha}\left(\frac{\Gamma(\beta+1)}{\Gamma(\alpha+\beta+2)} \right)^{\frac{1}{\alpha+1}}
 N^{2-\frac{1}{\alpha+1}}
+\mathcal{O}(N^{2-\frac{3}{2(\alpha+1)}}).
\end{equation*}
Applying Lemma \ref{lemma:approxCneq1} for $\alpha = 1/2, \delta = 0, c=2$ and $d = \alpha+\beta+1$, we obtain
  
  \begin{align*}
  G_{2t}(x, x)
&=  \frac{\gamma_{\alpha,\beta}e^{2t}
}{\Gamma(\alpha+1)^2}  \sum_{k=1}^{\infty} \frac{e^{-2(2x+\alpha+\beta+1)^{2}t}}{\left(2k+\alpha+\beta+1\right)^{{1-2\alpha}}}
  \left(1+\mathcal{O}\left(\frac{1}{k} \right) \right)
\end{align*}
Due to Lemma \ref{lemma:approxCneq1}, 
$$\sum_{k=1}^{\infty} e^{-2(2x+\alpha+\beta+1)^{2}t}\mathcal{O}\left(\frac{1}{k} \right)  = \mathcal{O}(\log(t)).$$
Furthermore, applying Lemma \ref{lemma:approxGeneralC},
$$\sum_{k=1}^{\infty} e^{-2(2x+\alpha+\beta+1)^{2}t} = 2^{-3/2}t^{-1/2}\sqrt{\pi}+ \mathcal{O}(1).$$
 Hence, we obtain
 $$ G_{2t}(x, x) =  \frac{\gamma_{\alpha,\beta}
}{\Gamma(\alpha+1)^2}2^{3/2}t^{-1/2}\sqrt{\pi}+ \mathcal{O}(\log(t))  $$
By choosing $t= \frac{1}{8}\left(\frac{\Gamma(\beta+1)}{\Gamma(\alpha+\beta+2)} \right)^{\frac{1}{\alpha+1}} N^{-\frac{1}{\alpha+1}}$ with $\alpha = 1/2$ and $\beta = -1/2$, we get
\begin{equation*}
E_G(\omega_N) \geq - \frac{3}{4} \pi^{\frac{1}{3}} N^{2-2/3} +\mathcal{O}(N\log(N)).
\end{equation*}
\end{proof}

\begin{proof}[Proof of Lemma \ref{lemma:approxGeneralC}]
Since
\begin{align*}
\sum_{k=1}^{\infty}
  \left(ck+d \right)^{\delta}e^{-2(ck+d)^2t} - \int_{1}^{\infty}  \left(cx+d \right)^{\delta}e^{-2(cx+d)^2t} dx 
\end{align*}
is lower bounded by $ e^{-2(c+d)^2t}(c+d)^\delta - e^{-\delta/2}\left(\frac{\delta}{4t}\right)^{\delta/2}$ and upper bounded by $ e^{-\delta/2}\left(\frac{\delta}{4t}\right)^{\delta/2}$, we obtain
\begin{align*}
\sum_{k=1}^{\infty}  \left(ck+d \right)^{\delta}e^{-2(ck+d)^2t} &= \int_{1}^{\infty}  \left(cx+d \right)^{\delta}e^{-2(cx+d)^2t} dx + \mathcal{O}(t^{-\delta/2})\\
&= \int_{0}^{\infty}  \left(cx+d \right)^{\delta}e^{-2(cx+d)^2t} dx + \mathcal{O}(t^{-\delta/2}).
\end{align*}
In the last equation we used the fact that 
$$ 0 \leq \int_{0}^{1}  \left(cx+d \right)^{\delta}e^{-2(cx+d)^2t} dx \leq e^{-\delta/2}\left(\frac{\delta}{4t}\right)^{\delta/2}.$$
Substituting $y$ by $(cx+d)\sqrt{2t}$ lead to the following equation
\begin{align*}
\sum_{k=1}^{\infty}  &\left(ck+d \right)^{\delta}e^{-2(ck+d)^2t} \\
&= \int_{d\sqrt{2t}}^{\infty}  y^{\delta}e^{-y^2} (2t)^{-\delta/2}dy + \mathcal{O}(t^{-\delta/2})\\
&=\frac{1}{c}(2t)^{-\frac{\delta+1}{2}}\left(\int_{0}^{\infty}  y^{\delta}e^{-y^2} dy - \int_{0}^{d\sqrt{2t}}  y^{\delta}e^{-y^2} dy\right) + \mathcal{O}(t^{-\delta/2})\\
&= \frac{1}{2c}(2t)^{-\frac{\delta+1}{2}} \Gamma\left(\frac{\delta+1}{2}\right) + \mathcal{O}(t^{-\delta/2}).
\end{align*}

\end{proof}

\begin{proof}[Proof of Lemma \ref{lemma:approxCneq1}]
The following sum
\begin{align*}
\sum_{k=1}^{\infty}
  \left(2k+d \right)^{-1}e^{-2(2k+d)^2t} - \int_{1}^{\infty}  \left(2x+d \right)^{-1}e^{-2(2x+d)^2t} dx 
\end{align*}
is lower bounded by $ 0$ and upper bounded by $\frac{1}{2+d}e^{-2(2+d)^2t}$. Therefore, we obtain
\begin{align*}
\sum_{k=1}^{\infty}
  \left(2k+d \right)^{-1}e^{-2(2k+d)^2t} =  \int_{1}^{\infty}  \left(2x+d \right)^{-1}e^{-2(2x+d)^2t} dx  + \mathcal{O}(1).
\end{align*}
Let $y = (2x+d)\sqrt{2t}$, then
\begin{align*}
\int_{1}^{\infty}  \frac{e^{-2(2x+d)^2t}}{\left(2x+d \right)} dx &=  \int_{(2+d)\sqrt{2t}}^{\infty}  \frac{\sqrt{2t}}{y} e^{-y^2} (2\sqrt{2t})^{-1}dy  \\
  &=  \frac{1}{2}\int_{(2+d)\sqrt{2t}}^{\infty}  \frac{1}{y} e^{-y^2} dy  \\
  &= \frac{\Gamma(0, (2+d)^22t)}{4}\\
  &= \frac{1}{4}\left(-\gamma -\log((2+d)^22t) - \sum_{k=1}^\infty \frac{(-(2+d)^22t)^k}{k!k}\right).
\end{align*}
\end{proof}


\begin{ackno}
  This research was initiated during the workshop ``Minimal energy problems
  with Riesz potentials'' held at the American Institute of Mathematics in
  May~2021. We would like to thank Carlos Beltr\'{a}n, Dmitriy Bilyk, and Damir Ferizovi\'{c} for
  their helpful suggestions, and Carlos for inspiring us to pursue this topic
  at the AIM workshop.
\end{ackno}
\bibliographystyle{amsplain}
\bibliography{refs}
\end{document}